\def\endproof{\relax\ifmmode\expandafter\endproofmath\else
  \unskip\nobreak\hfil\penalty50\hskip.75em\hbox{}\nobreak\hfil\bull
  {\parfillskip=0pt \finalhyphendemerits=0 \bigbreak}\fi}
\def\endproofmath$${\eqno\bull$$\bigbreak}
\def\bull{\vbox{\hrule\hbox{\vrule\kern3pt\vbox{\kern6pt}\kern3pt\vrule}\hrule}}
\newcommand\doms{\mathcal D}
\newcommand\Sym{\mathrm{Sym}}
\newcommand\Mas{\mu}
\newcommand\Ta{{\mathbb T}_{\alpha}}
\newcommand\Tb{{\mathbb T}_{\beta}}
\newcommand\Tc{{\mathbb T}_{\gamma}}
\newcommand\CF{\mathit{CF}}
\newcommand\ModFlow{\mathcal M}
\newcommand\States{\mathbf S}
\newcommand\Habc{{\mathcal H}_{\alpha\beta\gamma}}
\newcommand\Hab{{\mathcal H}_{\alpha\beta}}
\newcommand\Hbc{{\mathcal H}_{\beta\gamma}}
\newcommand\Hac{{\mathcal H}_{\alpha\gamma}}
\newcommand\HD{\mathcal H}
\newcommand\Pong[2]{{\mathcal P}(#1,#2)}
\newcommand\OneHalf{\frac{1}{2}}
\newcommand\weight{\mathfrak w}
\newcommand\Alg\AlgA
\newcommand\Blg\AlgB
\newcommand\Ainf{\mathcal A}
\newcommand\Ainfty\Ainf
\newcommand\alphas{\mathbf{\alpha}}
\newcommand\betas{\mathbf{\beta}}
\newcommand\gammas{\mathbf{\gamma}}
\newcommand\Zmod[1]{{\mathbb Z}/{#1}{\mathbb Z}}
\newtheorem{thm}{Theorem}[section]
\newtheorem{lemma}[thm]{Lemma}
\newtheorem{prop}[thm]{Proposition}
\newtheorem{defn}[thm]{Definition}
\numberwithin{equation}{section}
\newcommand\Idemp[1]{{\mathbf{I}}_{#1}}
\newcommand\x{\mathbf x}
\newcommand\y{\mathbf y}
\newcommand{\AlgA}{{\mathcal A}}
\newcommand{\AlgB}{{\mathcal B}}
\newcommand{\Mor}{\mathrm{Mor}}
\newcommand\z{\mathbf z}
\renewcommand{\L}{{\mathfrak {L}}}
\newcommand{\C}{\mathbb C} \newcommand{\Z}{\mathbb Z}   \newcommand{\R}{\mathbb R}
\newcommand{\CFm}{{\rm {CF}} ^-}
\newcommand\Field{\mathbb F}
\newcommand\cross{\mathrm{cross}}
\newcommand\Cross{\mathrm{Cross}}
\newcommand\LiftS{\widetilde S}
\DeclareMathOperator{\Id}{Id}
\begin{document}
\title{The pong algebra and the wrapped Fukaya category}

\begin{abstract}
  The aim of this paper is to identify the pong algebra
  defined in our earlier work
  with a certain endomorphism algebra in the wrapped Fukaya category
  of the symmetric product of a disk.
\end{abstract}

\author[Peter S. Ozsv\'ath]{Peter Ozsv\'ath}
\thanks {PSO was partially supported by NSF grant number DMS-1708284, DMS-2104536, and the Simons Grant {\em New structures in low-dimensional topology}.}
\address {Department of Mathematics, Princeton University\\ Princeton, New Jersey 08544} 
\email {petero@math.princeton.edu}

\author[Zolt{\'a}n Szab{\'o}]{Zolt{\'a}n Szab{\'o}}
\thanks{ZSz was supported by NSF grant number DMS-1904628
  and the Simons Grant {\em New structures in low-dimensional topology}.}
\address{Department of Mathematics, Princeton University\\ Princeton, New Jersey 08544}
\email {szabo@math.princeton.edu}

\maketitle
\section{Introduction}

In~\cite{Pong}, we introduced a differential graded algebra, the {\em
  pong algebra}, which is an enrichment of the strands algebra
  from~\cite{InvPair}. The aim of this note is to identify this
  algebra with the endomorphism algebra in a wrapped, relative Fukaya
  category, of a distinguished set of objects.

Consider $\C$, equipped with punctures at the points ${\mathbf
  P}=\{j+\OneHalf\}_{j=0}^{m-1}$, and let $M=\C\setminus {\mathbf P}$.
Consider $m-1$ disjoint vertical lines $e_j=j\times \R $ for
$j=1,\dots,m-1$.

Given any $k$-element subsequence $1\leq x_1<\dots<x_k\leq m-1$, which
we call a {\em $k$-idempotent state}, there is a corresponding
Lagrangian ${\Lambda}_{\x}=e_{x_1}\times\dots\times e_{x_k}\subset
\Sym^k(M\subset\Sym^k(\C)$

The manifold $\Sym^k(M)$ is a Liouville manifold, and the Lagrangians
$\Lambda_\x$ are exact and conical at infinity.  One can then consider
the {\em wrapped Fukaya category} of introduced by Abouzaid and
Seidel~\cite{AbouzaidSeidel} relative to the divisor ${\mathbf
P}\times \Sym^{k-1}(\C)$; see also~\cite{AbouzaidCriterion,AurouxBeginner}. We
will consider the endomorphism algebra of the set of objects
$\{\Lambda_\x\}_{\x}$ indexed by $k$-idempotent states.

This is an A-infinity algebra $A$ over $\Field[v_1,\dots,v_m]$,
equipped with idempotents $\Idemp{\x}$  correspondingto the idempotent states $\x$,
and $\Idemp{\x}\cdot A \cdot \Idemp{\y}$ is given by the chain complex
$\CF(\phi_H(\Lambda_\x),\Lambda_\y)$ (again, relative to the divisor
${\mathbf P}$). The composition is given by the chan map
\[ \circ\colon  \Mor(\Lambda_{x_2},\Lambda_{\x_3}) \otimes \Mor(\Lambda_{\x_1},\Lambda_{\x_2})\to \Mor(\Lambda_{\x_1},\Lambda_{\x_3}),\]
is specified by the following diagram
\begin{equation}
        \label{lem:TriangleMap}
        \begin{CD}
  \CF(\phi_H(\Lambda_{\x_1}),\Lambda_{\x_2})\otimes \CF(\phi_H(\Lambda_{\x_2}),\Lambda_{\x_3}) @>{\circ}>>\CF(\phi_H(\Lambda_{\x_1}),\Lambda_{\x_3}) \\
  @V{(\phi_H)_*\otimes \Id}VV @A{\sigma}AA \\
  \CF(\phi^2_H(\Lambda_{\x_1}),\phi_H(\Lambda_{\x_2})\otimes \CF(\phi_H(\Lambda_{\x_2}),\Lambda_{\x_3}) @>>>\CF(\phi^2_H(\Lambda_{\x_1}),\Lambda_{\x_3}), \\
  \end{CD}
\end{equation}
where $\sigma$ is a map induced from the Liouville flow, and the
bottom arrow is induced by counting pseudo-holomorphic triangles. The 
base ring is $\Field[v_1,\dots,v_m]$; and for the map in the bottom arrow,
each holomorphic triangle
$u$ is counted as a monomial $v_1^{n_{p_1}(u)}\dots v_m^{n_{p_m}(u)}$.

We can take the homology to obtain an ordinary category, $H({\mathcal
  C})$; or alternatively, we can consider the $A_{\infty}$ category,
where the higher compositions are defined by counting
pseudo-holomorphic polygons.

Our aim is to prove the following:

\begin{thm}
  \label{thm:IdentifyPong}
  The pong algebra is isomorphic to the endomorphism algebra, in the
  wrapped relative Fukaya category, of the objects
  $\{\Lambda_\x\}_{\x}$.
\end{thm}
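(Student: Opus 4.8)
The plan is to build an explicit chain-level isomorphism between the pong algebra $\mathcal{P}(m,k)$ of~\cite{Pong} and the endomorphism algebra $\mathcal{C}$ of the Lagrangians $\{\Lambda_\x\}$, matching generators with intersection points and the differential and products with counts of holomorphic polygons. The first step is the dictionary on the level of modules: fix a Hamiltonian $H$ (linear at infinity) whose time-one flow $\phi_H$ wraps each vertical line $e_j$ appropriately, so that $\phi_H(\Lambda_\x) \cap \Lambda_\y$ is cut out transversally, and identify the points of $\Sym^k(\phi_H(e_{x_1}\times\dots) )\cap (e_{y_1}\times\dots)$ with the generators of $\Idemp{\x}\cdot\mathcal{P}(m,k)\cdot\Idemp{\y}$. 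Just as in the bordered-Floer picture for the strands algebra, a generator is recorded by the partial matching it induces between $\{x_1,\dots,x_k\}$ and $\{y_1,\dots,y_k\}$ together with the winding data (how many times each strand wraps past each puncture $p_j = j+\OneHalf$), and this winding data is exactly the monomial exponent $v_1^{n_{p_1}}\cdots v_m^{n_{p_m}}$ attached by the relative Fukaya category. One checks that the Maslov grading of an intersection point matches the homological grading in~\cite{Pong} and that the $v_i$-weight matches the multiplicity grading. I expect the cleanest route is to reduce everything to $\Sym^k$ of a disk and use the fact, as in Lipshitz--Ozsv\'ath--Thurston and Auroux, that holomorphic polygons in $\Sym^k(M)$ counting toward the relevant structure maps correspond to collections of holomorphic disks in $M$ with boundary on the lines $e_j$, i.e.\ to the combinatorial ``pong'' trajectories (the bouncing-ball paths) that define the algebra's operations in~\cite{Pong}.

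**The second step is the differential.** One must show that $\partial$ on $\CF(\phi_H(\Lambda_\x),\Lambda_\x)$, given by counting rigid holomorphic strips, agrees with the pong differential, which resolves crossings and sums over the ``pong moves.'' Here I would appeal to a Riemann-mapping / Maslov-index computation analogous to the one identifying the strands algebra differential with counts of holomorphic disks: each index-one strip in $\Sym^k(M)$ projects to a single embedded bigon in $M$ between two of the lines $e_j, e_{j'}$ (possibly $j=j'$), with some punctures in its interior recorded by the $v_i$'s, and this is precisely one term of the pong differential. The signs/coefficients are over $\Field=\Field_2$ in~\cite{Pong}, which removes orientation bookkeeping. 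The key input is that the domains of these strips are forced to be ``thin'' by the planar/product geometry of the $e_j$ in $\C$, so the moduli spaces are combinatorial.

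**The third step is the product**, via the triangle-counting diagram~\eqref{lem:TriangleMap}. The strategy is standard: the composition $\circ$ in $\mathcal{C}$ is computed by a triangle map $\Habc$-style, so one identifies the relevant triple $(\phi_H^2(\Lambda_{\x_1}), \phi_H(\Lambda_{\x_2}), \Lambda_{\x_3})$ and shows that rigid holomorphic triangles in $\Sym^k(M)$ with those boundary conditions are in bijection with the composable pairs of pong generators whose ``concatenation'' (in the sense of the pong algebra's multiplication, including the bouncing interactions that produce the nontrivial relations) gives the output generator, again with matching $v_i$-weights. The Liouville-rescaling map $\sigma$ and the pushforward $(\phi_H)_*$ are isomorphisms on the relevant complexes and do not affect the combinatorial identification. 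One then extends to the $A_\infty$ structure: count rigid $(n+1)$-gons and match them with the iterated pong operations; since~\cite{Pong} shows the pong algebra is an honest dg-algebra (associative, so $\mu_n=0$ for $n\geq 3$), one must verify that all higher holomorphic polygon counts vanish, which follows once the embedded-polygon classification forces any rigid $(n+1)$-gon with $n\geq 3$ to have a domain that is either empty or degenerates to a composition of lower polygons.

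**The main obstacle** is the third step, specifically the transversality and gluing needed to make the relative wrapped Fukaya category well-defined and to guarantee that its structure maps really are computed by the naive embedded-polygon counts — i.e.\ establishing the analytic foundations that let one pass from $\Sym^k(M)$-holomorphic curves to the combinatorics of bouncing paths in $\C$, uniformly in the wrapping parameter. Equivalently: proving that the particular $H$ and almost-complex structure can be chosen so that \emph{all} the relevant moduli spaces are transversally cut out and consist exactly of the expected thin domains, with no extra contributions from curves that wrap ``too far.'' Once that classification is in hand, matching it termwise with the definitions in~\cite{Pong} and checking gradings is essentially bookkeeping, and the isomorphism of $A_\infty$-algebras of Theorem~\ref{thm:IdentifyPong} follows.
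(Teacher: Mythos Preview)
Your three-step outline matches the paper's decomposition exactly: identify generators and the differential (Proposition~\ref{prop:IdentifyComplexes}), identify the product via triangle counts (Proposition~\ref{prop:TrianglePong}), and show $\mu_n=0$ for $n\geq 3$ (Proposition~\ref{prop:NoNGons}). So the architecture is right.

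Where you diverge from the paper is in the mechanism, and this matters. The paper does not argue by projecting curves in $\Sym^k$ to thin domains in $M$ and invoking Riemann mapping; instead it builds a very concrete combinatorial model for the wrapped diagram as the quotient $\R^2/\mathbb{G}_m$ of an infinite grid by a group of $180^\circ$ rotations. In that model, Heegaard states are literally graphs of lifted partial permutations, crossings correspond to rectangles, and the crucial technical input is an Euler-measure computation (Lemmas~\ref{lem:EulerMeasure} and~\ref{lem:EulerMeasureNGon}) which, combined with Sarkar's index formula~\eqref{eq:IndexOfNgon}, yields
\[
\Mas(\psi)=O_1(\psi)+O_m(\psi)+\#(\psi\cap\Delta).
\]
This single identity does almost all the work: it forces $\Mas\geq 0$ for any pseudo-holomorphic polygon, which immediately kills rigid $n$-gons for $n>3$ (since those need $\Mas=3-n<0$), and for triangles it forces $O_1=O_m=\#(\psi\cap\Delta)=0$, hence decomposability into genuine planar triangles, which are then matched with pong composition.

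Your proposal has a gap precisely here. Your suggested reason for $\mu_n=0$ --- that rigid higher polygons ``have a domain that is either empty or degenerates'' --- is not an argument; without the Euler-measure identity there is no obvious obstruction to a rigid $4$-gon. Relatedly, you flag transversality and ``curves that wrap too far'' as the main obstacle, but the paper's proof is essentially transversality-free: the index formula is combinatorial, the bigon/rectangle differential follows because the diagram is nice in the sense of Sarkar--Wang, and the continuation map $\sigma$ is the identity for grading reasons (end of the proof of Proposition~\ref{prop:TrianglePong}). The genuine obstacle is finding and proving the index identity above, not the analysis.
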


This proof is in the spirit of Auroux~\cite{Auroux}; see
also~\cite{TorusAlg}. Indeed, the proof is a fairly straightforward
application of a suitable Heegaard diagram, which can be thought of as
the analogue of the ``Auroux-Zarev piece'' for the pong algebra;
see~\cite{Auroux,Zarev,HomPairing}.

It is interesting to compare the results herein with the results
of~\cite{LaudaLicataManion,ManionRouquier}, and the constructions
of~\cite{PetkovaVertesi,EllisPetkovaVertesi,Zibrowius,KotelskiyWatsonZibrowius}.

Our interest in the pong algebra stems from our
goal of understanding knot Floer homology, a topic which we do not
discuss in the present paper, but hope to return to in future
work~\cite{NextPong}. 

{\bf{Acknowledgements:}} The authors wish to thank Denis Auroux,
Robert Lipshitz, Dylan Thurston, and Andy Manion for interesting
conversations.

\section{The wrapped Fukaya category}
\label{sec:Wrap}

\subsection{The symmetric product}
We start with some of the geometric setup, as explained
in~\cite{AbouzaidSeidel,CieliebakEliashberg,AurouxBeginner}.

\begin{defn}
  A {\em Liouville domain} is a $2n$-dimensional manifold with
  boundary, equipped with a one-form $\lambda$ such that
  $\omega=d\lambda$ is symplectic, and the dual vector field, called
  the {\em Liouville vector field}, $Z$ characterized by
  $i_Z\omega=\lambda$ points strictly outwards along $\partial M$.
\end{defn}

A special case of a Liouville domain is a Stein manifold, which is a
complex manifold $(V,J)$, equipped with a proper, smooth function
$\phi\colon V \to \R$, which is {\em strictly psudo-plurisubharmonic};
i.e. for which the two-form $\omega=-dd^{\C}\phi$ is symplectic and
$J$-compatible. (Here, $d^\C\phi=d\phi\circ J$.) In this case,
$\lambda=-d^{\C}\phi$; i.e. $d\phi=\lambda\circ J$.

Our basic example is the following. Let $A$ denote the infinite
cylinder $\R\times S^1\cong \C\setminus 0$. Let $(t,\theta)$ denote
the coordinates with respect to the parameterization $\R\times S^1$;
so that the isomorphism $\R\times S^1\cong \C\setminus 0$ is given by
$(t,\theta)\mapsto e^{t+i\theta}$; sometimes we write $r=e^{t}$.  The
function $\log(r)^2$ is strictly pluri-subharmonic, with $\omega=2
dt\wedge\theta$.    Let
$H=r^2$. The Hamiltonian flow for $H$, written $\Phi\colon \R\times A \to A$, is
given by
\begin{equation}
  \label{eq:HamiltonianFlow}
  \Phi(s,t,\theta)=(t,2 s t \theta).
\end{equation}

The Liouville flow, written $\Psi\colon \R\times A \to A$, is given by
\begin{equation}
  \label{eq:LiouvilleFlow}
  \Psi(s,t,\theta)=(t e^s,\theta).
\end{equation}

To a Liouville domain Abouzaid and Seidel associate an $A_\infty$
category, the {\em wrapped Fukaya category}. Objects are Lagrangian
submanifolds $L\subset M$ that intersect $\partial M$ transversely,
with the property that $\theta|L\in \Omega^1(L)$ is exact, and
$\theta$ vanishes to infinite order along the boundary $\partial
L=L\cap \partial M$. 

We will be considering Lagrangians in the symmetric product of $\C$,
$\Sym^k(\C)$. There is a quotient map $\pi\colon \C^k\to \Sym^k(\C)$,
and also there is a diffeomorphism $\C^k\cong\Sym^k(\C)$. 
The
relationship between Lagrangians in a symmetric product of a curve
with the ${\mathfrak S}_k$-invariant Lagrangians in the $k$-fold
Cartesian product is unclear; but there is a nice bridge offered by
work of Perutz~\cite{Perutz}, building on work of
Varouchas~\cite{Varouchas}, who constructs a new symplectic form on
the symmetric product that agrees with group-invariant the symplectic
structure on the Cartesian product on an open set. 
The case at hand is a
particularly simple, local version. (See Proposition~\ref{prop:IdentifyLagrangians} below.)

In the interest of concreteness, we find it convenient to have some explicit
parameterizations. Consider the
map from the infinite cylinder
$A=\C\setminus\{0\}\cong \R \times S^1=\R\times(\R/2\pi\Z)$ to $\C$,
specified by
\[ p(z)=\OneHalf\left(z+\frac{1}{z}\right),\]
This map has the following properties:
\begin{itemize}
\item $p$ is is a branched double-cover, with two branched points
  at $1$ and $-1$.
\item $p$ is proper
\item The image under $p$ of the circle $\{0\}\times S^1$ is the interval
  $[-1,1]\subset \C$.
\end{itemize}

Given a $k$-element subset $\x\subset S^1\setminus \{\pm
1\}$, we can view $\R\times \x$
as a subset of $\Sym^k(A)$. This image is a smooth submanifold, 
 whose image under $\Sym^k(p)$ is a smooth submanifold
of $\Sym^k(\C)$. We denote this subspace $\Lambda_{\x}$.

More generally, fix ${\mathbf x}$ as above and an element $\phi\in \R^{\geq 0}$.  
There is a submanifold of $A\cong \R\times S^1$ of elements of
the form $\{(t,e^{i\phi\cdot t/2}\x)\}_{t\in\R}$, which induces
a submanifold $\Lambda_{\x}^{\phi}\subset \Sym^k(\C)$. 
Clearly, $\Lambda_{\x}^{0}=\Lambda_{\x}$.

Let $t\colon A \to \R$ be projection to the first coordinate or, equivalently,
$z\in \C\setminus\{0\}\mapsto \log|z|$.
Consider the function $\delta\colon C^k\to \R\geq 0$ defined by
\[ \delta(z_1,\dots,z_k)=\min(\min_i t(z_i),\min_{i\neq j} |z_i-z_j|),\]
which descends to a continuous function $\Sym^k(\C)\to \R$,
so that
\[ \delta^{-1}(0)=\Delta\cup \left(S^1\times \Sym^{k-1}(A)\right).\]

The following is an adaptation of a theorem of Perutz~\cite{Perutz};
see also Varouchas~\cite{Varouchas}.

\begin{prop}
  \label{prop:IdentifyLagrangians}
  Given any bounded open set $W\subset \Sym^k(\C)$ containing
  $\Sym^k[-1,1]$ and any $\eta>0$, 
  there is a smooth plurisubharmonic
  function $\psi\colon W\to \R$ with the following properties:
  \begin{itemize}
  \item 
    Given any $k$-elements subset $\x\subset (S^1\setminus\{\pm 1\})$
    with $\delta(\x)\geq\eta$, 
    the intersection of $W$ with the submanifold
      $\Lambda_{\mathbf x}\subset \Sym^k(\C)$
      is Lagrangian
      with respect to the symplectic structure $d d^{\C}\psi$.
    \item Given $s\geq 0$, there is an exact Hamiltonian diffeomorphism 
      $\Phi^s\colon \R\times W\to W$
      with the property that
      $\Phi^{s}(\Lambda_{\mathbf x}\cap W)=\Lambda^{s}_{\mathbf x}\cap W$.
  \end{itemize}
\end{prop}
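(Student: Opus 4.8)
The plan is to build $\psi$ by interpolating between two model plurisubharmonic functions: the standard one on $\Sym^k(\C)\cong\C^k$ coming from the flat structure, and the one pulled back from the cylinder picture via $\Sym^k(p)$ on the region near $\Sym^k[-1,1]$, where the branched cover $p$ is well-behaved. Concretely, on the locus where $\delta$ is bounded below by a small constant (so the points $z_i$ are separated and bounded away from $S^1$), the map $\Sym^k(p)$ is a local diffeomorphism onto its image, and the functions $\log|z_i|^2$ on the cylinder factors give a plurisubharmonic function whose associated Liouville form makes each $\R\times\x$ Lagrangian — this is where the Lagrangian property we want comes from. Away from that locus we fall back on the standard Kähler structure of $\C^k$. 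The function $\delta$ is exactly the tool that lets us cut off between these regimes: choose a smooth bump function of $\delta$ supported where $\delta$ is small, and set $\psi$ to be the corresponding convex combination. Plurisubharmonicity is preserved under convex combinations of plurisubharmonic functions and under adding a large multiple of a strictly plurisubharmonic function, so after rescaling the $\C^k$-model term upward we can absorb any negative contribution from the cutoff's derivatives — this is the Perutz–Varouchas gluing trick in a purely local, elementary form, which is why the case at hand is so much simpler than the general theorem.

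For the first bullet, I would verify that for $\x$ with $\delta(\x)\ge\eta$ the submanifold $\Lambda_\x\cap W$ lies entirely in the region where $\psi$ agrees with the cylinder model (by shrinking the support of the cutoff relative to $\eta$ if necessary, or rather by noting $\Lambda_\x$ stays in $\{\delta\ge\eta'\}$ for a related $\eta'$ once we restrict to the bounded set $W$), so that Lagrangianity is immediate from the product structure $\R\times\x\subset\Sym^k(A)$ and the fact that $\R\times\{e^{i\theta}\}$ is Lagrangian for $2\,dt\wedge d\theta$. There is a small bookkeeping point: $W$ contains $\Sym^k[-1,1]$, which is in $\delta^{-1}(0)$, so $\psi$ must be smooth across that locus; this is fine because the cylinder-model term $\sum\log|z_i|^2$ is smooth and plurisubharmonic on all of $\Sym^k(A)$ away from the diagonal, and the diagonal is handled by the $\C^k$-model term which dominates there.

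For the second bullet, the exact Hamiltonian diffeomorphism $\Phi^s$ is modeled on the Liouville flow $\Psi(s,t,\theta)=(te^s,\theta)$ of equation~\eqref{eq:LiouvilleFlow} on each cylinder factor — or rather a rescaled/truncated version so that it preserves $W$ — composed with a twist that turns the vertical lines $\R\times\x$ into the tilted lines $\{(t,e^{i\phi t/2}\x)\}$ defining $\Lambda^s_\x$. The point is that the time-$s$ flow of the Hamiltonian $H=r^2$ (equation~\eqref{eq:HamiltonianFlow}) rotates the angular coordinate by an amount linear in $t$, which is precisely the deformation $\Lambda_\x\rightsquigarrow\Lambda^s_\x$; one then checks this flow, cut off to be supported in $W$ and adjusted to be an honest symplectomorphism for $dd^\C\psi$ rather than the model form, still carries $\Lambda_\x\cap W$ to $\Lambda^s_\x\cap W$.

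The main obstacle will be the gluing/cutoff estimate: ensuring that the convex-combination function $\psi$ is genuinely plurisubharmonic (not merely plurisubharmonic in the two pure regions) despite the derivatives of the cutoff function of $\delta$ contributing indefinite terms to the complex Hessian. The resolution is the standard one — $\delta$ can be taken smooth away from its zero set, the cutoff region can be shrunk into a compact annular zone where both model functions are defined, and there one adds a sufficiently large constant multiple of $|z|^2$ (harmless for the Lagrangian conclusions, which only depend on $\psi$ near $\Lambda_\x$) to swamp the bounded error terms. Making the constants uniform in $\x$ over the set $\{\delta(\x)\ge\eta\}$ is what forces the hypotheses $W$ bounded and $\eta$ fixed at the outset.
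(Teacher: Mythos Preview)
Your approach differs from the paper's in a way that makes your life harder than necessary, and the hardest step---the ``main obstacle'' you flag at the end---is precisely the step the paper avoids by quoting a lemma.

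The paper does \emph{not} glue two different model potentials (a flat $\C^k$ potential and a cylinder potential) with a cutoff in $\delta$. Instead it works with a \emph{single} function: the pushforward of $t^2=\sum_i (\log|z_i|)^2$ along the branched cover $\Pi\circ p^{\times k}\colon A^k\to\Sym^k(\C)$. A general fact (due to Varouchas) says that the pushforward of a smooth strictly plurisubharmonic function along a proper holomorphic branched cover is \emph{continuous} and strictly plurisubharmonic everywhere; its only defect is failure of smoothness along the branch locus, i.e.\ along $\delta^{-1}(0)$. Varouchas's ``Lemme Principal'' is then exactly the statement that such a continuous strictly psh function can be replaced by a \emph{smooth} strictly psh function $\chi$ on $W$ which agrees with the original outside a prescribed neighborhood $V$ of the non-smooth locus. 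Taking $V=\delta^{-1}(0,\eta/2)$ finishes the first bullet immediately: for $\delta(\x)\geq\eta$ the submanifold $\Lambda_\x$ sits where $\chi$ literally equals the cylinder model, so Lagrangianity is automatic.

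Your two-model cutoff construction is trying to reprove Varouchas's lemma by hand in this special case. That is not impossible, but the estimate you defer (controlling the indefinite Hessian contribution from derivatives of the cutoff) is genuinely the entire content of the lemma, and you have not supplied it. Moreover, your formulation slightly misremembers the Varouchas mechanism: it is a \emph{smoothing} result for one continuous psh function, not a gluing result for two smooth ones.

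For the second bullet you are also overcomplicating things. The paper takes $\Phi^s$ to be the Hamiltonian flow of the function $\chi$ itself. Since $\chi$ equals the cylinder model $t^2$ outside $V$, and the submanifolds $\Lambda_\x$ with $\delta(\x)\geq\eta$ stay in that region, the flow acts on them exactly as the model flow of equation~\eqref{eq:HamiltonianFlow} does---no separate truncation, no ``composed with a twist,'' and no need to invoke the Liouville flow at this stage.
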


\begin{proof}
  This follows easily from Varouchas's ``Lemme
  Principal''~\cite{Varouchas}, which which we state in a slightly
  simplified form. Given
  the data:
  \begin{itemize}
  \item 
    Open subsets $U$, $V$, $W$, and $X$ in $\C^n$ 
    so that $U$, $V$, and $W$ are bounded, 
    with ${\overline
      U}\subset V$, ${\overline V}\subset W$, ${\overline W}\subset X$
  \item a continuous, strictly pluri-subharmonic function $\phi\colon X\to \R$
    so that $\phi|_{X\setminus U}$ is smooth.
  \end{itemize}
  there is a smooth, strictly pluri-subharmonic function $\chi\colon W\to \R$
  so that 
  \[ \psi|_{W\setminus (V\cap W)}=\chi|_{W\setminus (V\cap W)}.\]

  The function  $t^2\colon \C^k\to \R$ given by $t^2=\sum_{i=1}^k
  |t_i|^2$ is a smooth.  Let $X=(\C\setminus \{0\})^{\times k}$,
  and $\Pi\colon \C^{\times k}\to \Sym^k(\C)$ be the quotient map.
  As
  in~\cite{Varouchas}, since $t^2\colon A \to \R$ is a smooth,
  strictly pluri-subharmonic function and $\Pi\circ p^{\times k}\colon
  A^k\to \Sym^k(\C)$ is a branched cover, the push-forward $\psi=(\Pi\circ p^{\times k})_*(t^2)$ is a
  continuous, strictly pluri-subharmonic function on $X$.
  Given $W$ as in the statement of the proposition, apply Varouchas' lemma to $\psi$,
  $U=\delta^{-1}(0,\frac{\eta}{3})$, and $V=\delta^{-1}(0,\frac{\eta}{2})$.   The exact
  Hamiltonian is associated to the Hamiltonian function $\chi$ coming from the lemma.  Since
  $\chi$ agrees with $t^2$ over the complement of $V$, it is easy to
  see that the integral of $\chi$ preserves $\delta$ over that
  set. The second point now follows readily; see  Equation~\eqref{eq:HamiltonianFlow}.
\end{proof}

We fix the following data:
\begin{itemize}
  \item integers $m$ and $k$ with $0<k<m$
  \item $m$ basepoints 
    $-1=O_1,O_2,\dots,O_{m-1},1=O_m$
    so that there is a positively oriented arc in $S^1$
    from $O_{i+1}$ ot $O_i$ containing no other $O_j$.
  \item $m-1$ additional points $p_1,\dots,p_{m-1}$,
    so that $z_i$ is on the arc from $O_{i+1}$ to $O_i$.
  \item $\delta(z_i,z_{i+1})\geq 2/m-1$.
\end{itemize}
Choose $W\subset \Sym^k(\C)$ as in
Proposition~\ref{prop:IdentifyLagrangians}, and let $\psi$ be the
function supplied by that proposition.  There are
$\binom{m-1}{k}$ Lagrangians $\Lambda_\x\cap
  W$, associated to the $k$-element subsets of
  $\{p_1,\dots,p_{m-1}\}$.  We will be considering these as our basic
  objects in the wrapped Fukaya category of $W$.

Let $\Psi^c\colon W\to W$ be the time $\log(c)$ flow of the Liouville vector field
induced from $\psi$.
It is an easy consequence of Equation~\eqref{eq:LiouvilleFlow}
that
\begin{equation}
  \label{eq:ImageUnderLiouville}
  \Psi^c(\Lambda_\x^\phi)=
  \Lambda_\x^{\phi/c}.
\end{equation}

\subsection{The relative Fukaya category}

In our case, the symplectic manifold $W$ is equipped also with $m$
divisors, of the form $\{O_i\}\times \Sym^{k-1}(\HD)$. Correspondingly,
as in~\cite{HolDisk}; see also, cite~\cite{PerutzSheridan} for a
general construction, the Floer complexes are to be thought of as
modules over a polynomial algebra $\Field[v_1,\dots,vm]$.
Specifically, for $L_1=\Phi_H(\Lambda_{\x_1})$, $L_2=\Lambda_{\x_2}$,
the complex $\CF(L_1,L_2)$ is a module over $\Field[v_1,\dots,v_m]$
freely generated by $L_1\cap L_2$, with differential determined by
\[ \partial \x= \sum_\y \sum_{\{\phi\in\ModFlow(\x,\y)\mid
  \Mas(\phi)=1\}} \#\ModFlow(\phi) \cdot \y \cdot v_1^{n_{O_1}(\phi)}\cdots
v_m^{n_{O_m}(\phi)}.\] Here, $n_{O_i}(\phi)$ denotes the
(non-negative) algebraic intersection number of $\phi$ with the
divisor $\{O_i\}\times \Sym^{k-1}(\HD)$. The moduli space
$\ModFlow(\x,\y)$ is to be taken with respect to a suitable
perturbation of the Floer equation.

\section{Lifted partial permutations and the pong algebra}
\label{sec:LiftPerm}

We recall the construction of the pong algebra from~\cite[Section~4]{Pong};
we refer the reader to that reference for a more leisurely account.

Let $r_t\colon \R\to\R$ be the reflection $r_t(x)=2t-x$; and consider
the subgroup $G_m$ of the reflection group of the real line generated
by $r_{\OneHalf}$ and $r_{m-\OneHalf}$. The quotient of the integral lattice by
this group of rigid motions is naturally an $m-1$ point set; generated
by $\{1,\dots,m-1\}$. Let
\begin{equation}
  \label{eq:DefQ1}
  Q_1 \colon \Z \to \{1,\dots,m-1\}
\end{equation} denote this quotient map.

Note that $G_m$ also acts on the set $\OneHalf+ \Z$. The quotient of $\OneHalf
+ \Z$ by $G_m$ is naturally the $m$-point set, $\{\OneHalf,\dots,
m-\OneHalf\}$.  We think of these points as being in one-to-one
correspondence with the underlying variables in the pong algebra,
where the point $j\in \{\OneHalf,\dots,m-\OneHalf\}$ corresponds to the 
variable $v_{\OneHalf+j}$.
Explicitly, we have the map
\begin{equation}
  \label{eq:DefQ2}  Q_2\colon \Z+\OneHalf\to \{1,\dots,m\},
\end{equation}
defined so that $Q_2(j-\OneHalf)$ is the element $i\in\{1,\dots,m\}$
with
$i\equiv j\pmod{2m-2}$ or $i\equiv 2-j\pmod{2m-2}$.

A $G_m$ invariant subset $\LiftS$ of $\Z$ has a natural quotient $\LiftS/G_m$,
which is a subset of $\{1,\dots,m-1\}$. 

\begin{defn}
  A {\em lifted partial permutation on $k$ letters} 
  is a pair $({\widetilde S}, {\widetilde f})$ where:
  \begin{itemize}
  \item ${\widetilde S}\subset \Z$
    is a $G_m$-invariant subset 
  \item ${\widetilde f}\colon {\widetilde S} \to \Z$
    is a $G_m$-equivariant map;
  \end{itemize}
  subject to the following two conditions:
  \begin{itemize}
    \item  ${\widetilde S}/G_m$ consists of $k$ elements
    \item   the induced map ${\widetilde f}\colon {\widetilde S}/G_m\to \Z/G_m$
      is injective.
  \end{itemize}
\end{defn}

\begin{defn}
  A lifted partial permutation $({\widetilde S},{\widetilde f})$ has a
  {\em weight vector} ${\vec
    \weight}=(\weight_1,\dots,\weight_m)\in(\OneHalf \Z)^m$, specified
  by
  \[ \weight_j({\widetilde f})= \OneHalf 
  \#\{i\in{\widetilde S}\big| i<j-\OneHalf<{\widetilde f}(i)~\text{or}~
  i>j-\OneHalf>{\widetilde f}(i)\}.
  \]
\end{defn}

We extend the weight vector to $\Field[v_1,\dots,v_m]$ so that $\weight(v_i)$
is the $i^{th}$ basis vector in $\Z^m$.

\begin{defn}
  A {\em crossing} in a lifted partial permutation ${\widetilde f}$ is
  an equivalence class of pairs of integers $(i,j)$ with the property
  that $i<j$ and ${\widetilde f}(i)>{\widetilde f}(j)$. We say that
  $(i,j)$ and $(i',j')$ determine the same crossing if there is some
  $g\in G_m$ so that $\{g\cdot i, g\cdot j\}=\{i',j'\}$.  We write
  $\langle i,j\rangle$ for the equivalence class of the pair of
  integers$(i,j)$.
  Let $\Cross({\widetilde f})$ denote the set of crossings in ${\widetilde f}$.
\end{defn}

Note that $\langle i,j\rangle \in \Cross({\widetilde f})$ does not exclude
cases where $[i]=[j]$.
Let $\cross({\widetilde f})$ denote the number of
crossings in ${\widetilde f}$.

Let 
$({\widetilde f},{\widetilde S})$ and 
$({\widetilde g},{\widetilde T})$ be two partial permutations with 
${\widetilde T}={\widetilde f}({\widetilde S})$.
Then, the composite $({\widetilde g}\circ {\widetilde f},{\widetilde S})$
is a lifted partial permutation.

It is elementary to verify that 
\begin{align*}
  \weight({\widetilde g}\circ {\widetilde f})&\leq 
  \weight({\widetilde g}) + \weight({\widetilde f}) \\
  \cross({\widetilde g}\circ {\widetilde f})&\leq 
  \cross({\widetilde g}) + \cross({\widetilde f})
\end{align*}

The pong algebra $\Pong{m}{k}$ is the algebra over
$\Field[v_1,\dots,v_m]$ freely generated by lifted partial
permutations, with a multiplication map
\[ \mu_2\colon \Pong{m}{k}\otimes_{\Field[v_1,\dots,v_m]} \Pong{m}{k}
\to \Pong{m}{k} \]
characterized by
\[ \mu_2([{\widetilde
  f},{\widetilde S}], [{\widetilde g},{\widetilde T}])]
=\begin{cases}
0 & \text{if ${\widetilde T}\neq {\widetilde f}({\widetilde S)}$} \\
0 & \text{if $\cross({\widetilde g}\circ {\widetilde f})<\cross({\widetilde
    g}) + \cross({\widetilde f})$} \\
v\cdot [{\widetilde g}\circ {\widetilde f},{\widetilde S}] &{\text{otherwise,}} 
\end{cases}\]
where $v$ is the monomial in $v_1,\dots,v_m$ chosen so that
\[ 
\weight(\mu_2([{\widetilde f},{\widetilde S}], 
[{\widetilde g},{\widetilde T}]))
= \weight[{\widetilde f},{\widetilde S}]+
\weight[{\widetilde g},{\widetilde T}].\]

Given $a,b\in\Pong{m}{k}$, we abbreviate $\mu_2(a,b)$ by $a\cdot b$.

For each $\langle i,j\rangle\in\Cross({\widetilde f})$, there is a new lifted partial permutation ${\widetilde f}_{\langle i,j\rangle}$ characterized as follows:
\[
{\widetilde f}_{\langle i,j\rangle}(k)=
\begin{cases}
{\widetilde f}(k) & {\text{if~$[k]\not\in\{[i],[j]\}$}} \\
g\cdot {\widetilde f}(j) &{\text{if $k=g\cdot i$}} \\
g\cdot {\widetilde f}(i) &{\text{if $k=g\cdot j$}} 
\end{cases}.\] 

It is elementary to verify that 
\begin{align*}
  \weight({\widetilde f}_{\langle i,j\rangle})&\leq \weight({\widetilde f}) \\
  \cross({\widetilde f}_{\langle i,j\rangle})&\leq  \cross({\widetilde f})-1
\end{align*}

Given $\langle i,j\rangle\in\Cross({\widetilde f},{\widetilde S})$, 
let $\partial_{\langle i,j\rangle}{\widetilde f}\in\Pong{m}{k}$ be the element defined by
\[ \partial_{\langle i,j\rangle}{\widetilde f}= 
\begin{cases} 0 & {\text
    {if $\cross({\widetilde f}_{\langle i,j\rangle})<
      \cross({\widetilde f})-1$}} \\
  v \cdot {\widetilde f}_{\langle i,j\rangle} & {\text{otherwise,}}
\end{cases} \]
where now $v$ is the monomial in $v_1,\dots,v_m$ characterized by the property that
\[ 
\weight[{\widetilde f},{\widetilde S}]
= \weight[{\widetilde f}_{\langle i,j\rangle},{\widetilde S}].
\]
Define a map
\[ \partial\colon \Pong{m}{k}\to \Pong{m}{k}, \]
characterized by
\[ \partial({\widetilde f},{\widetilde S}) =\sum_{\langle i,j\rangle\in\Cross({\widetilde f},{\widetilde S})} \partial_{\langle i,j\rangle} [{\widetilde f},{\widetilde S}].\]

With the above definitions, $\Pong{m}{k}$ is a differential graded
algebra over $\Field[v_1,\dots,v_m]$;
see~\cite[Proposition~\ref{P:prop:PongIsAlg}]{Pong} for details.

It will be convenient to have the following:

\begin{lemma}
  \label{lem:NoOuterUs}
  If
  $\cross({\widetilde f}\circ{\widetilde g})=
  \cross({\widetilde f})+\cross({\widetilde g})$
  then 
  \[ \weight_1({\widetilde f}\circ{\widetilde g})=
  \weight_1({\widetilde f})+\weight_1({\widetilde g}) 
  \qquad{\text{and}}\qquad
  \weight_m({\widetilde f}\circ{\widetilde g})=
  \weight_m({\widetilde f})+\weight_m({\widetilde g}). \]
\end{lemma}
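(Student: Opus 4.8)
The plan is to trace through the definition of the weights $\weight_1$ and $\weight_m$ and see that the relevant ``crossings over the walls $\OneHalf$ and $m-\OneHalf$'' compose additively whenever the total crossing count is additive. Recall that $\weight_j({\widetilde f})$ counts (with weight $\OneHalf$) the elements $i\in{\widetilde S}$ for which $j-\OneHalf$ lies strictly between $i$ and ${\widetilde f}(i)$; for $j=1$ the wall is $\OneHalf$, and for $j=m$ the wall is $m-\OneHalf$. Since ${\widetilde S}$ is $G_m$-invariant and ${\widetilde f}$ is $G_m$-equivariant, and the walls $\OneHalf$ and $m-\OneHalf$ are precisely the mirrors generating $G_m$, the reflections $r_{\OneHalf}$ and $r_{m-\OneHalf}$ map the set of strands crossing one wall to itself. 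The key observation is that an element $i$ with $i < \OneHalf < {\widetilde f}(i)$ (or the reverse) corresponds, after quotienting, to a strand that ``bounces off'' the left wall; and composing ${\widetilde g}$ after ${\widetilde f}$ can only create such a bounce if ${\widetilde f}$ already had one there, since the composite strand crossing the wall forces at least one of the two factors to cross it.

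The key steps, in order, would be: (1) Reduce to $\weight_1$; the statement for $\weight_m$ follows by the symmetry $x\mapsto (2m-2)-x$ conjugating the generator $r_{\OneHalf}$ with $r_{m-\OneHalf}$. (2) Establish the general sub-additivity $\weight_1({\widetilde f}\circ{\widetilde g})\leq \weight_1({\widetilde f})+\weight_1({\widetilde g})$ — this is already asserted in the excerpt as the first displayed inequality on weights — so it suffices to prove the reverse inequality under the crossing hypothesis. (3) Relate $\weight_1$ to crossings localized at the left wall: set up a bijection (or an injection in each direction) between strands of ${\widetilde f}$ counted by $\weight_1$ and certain ``wall crossings'' $\langle i, r_{\OneHalf}\cdot i\rangle$ or pairs $\langle i,j\rangle$ with $i,j$ separated by $\OneHalf$; concretely, an element $i<\OneHalf<{\widetilde f}(i)$ together with its mirror $r_{\OneHalf}(i) = 1-i > \OneHalf$, which has ${\widetilde f}(1-i) = 1-{\widetilde f}(i) < \OneHalf$, produces a crossing of the pair $\{i, 1-i\}$. (4) Show that the hypothesis $\cross({\widetilde f}\circ{\widetilde g})=\cross({\widetilde f})+\cross({\widetilde g})$ implies no crossing of ${\widetilde f}$ and no crossing of ${\widetilde g}$ ``cancel'' against each other, and in particular the wall-crossings at $\OneHalf$ for ${\widetilde f}\circ{\widetilde g}$ are exactly the union (not merely a subset) of those coming from ${\widetilde f}$ and from ${\widetilde g}$; since each wall-crossing of ${\widetilde g}$ contributes a strand $i$ of ${\widetilde g}$ with $i<\OneHalf<{\widetilde g}(i)$, and the corresponding strand of ${\widetilde f}\circ{\widetilde g}$ either keeps its wall-crossing (contributing to $\weight_1$ of the composite) or has $({\widetilde f}\circ{\widetilde g})(i) < \OneHalf$, in which case ${\widetilde f}$ has a strand from ${\widetilde g}(i)>\OneHalf$ back down past $\OneHalf$, which combined with the ${\widetilde g}$-crossing would force a drop in the total crossing count — contradicting the hypothesis.

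The main obstacle I expect is step (4): carefully accounting for why additivity of the \emph{total} crossing number rules out the ``hidden cancellation'' at the wall. The subtlety is that $\weight_1$ and $\cross$ are different invariants, so one cannot directly invoke additivity of $\cross$; one must exhibit, for every strand $i$ of ${\widetilde g}$ with $i<\OneHalf<{\widetilde g}(i)$ that fails to remain a wall-crossing in the composite, an honest loss of a crossing — i.e., a pair $\langle i,j\rangle$ that is a crossing of ${\widetilde g}$ or of ${\widetilde f}$ but not of ${\widetilde f}\circ{\widetilde g}$, or vice versa — using the mirror partner $1-i$ and the fact that ${\widetilde g}$ being equivariant forces ${\widetilde g}(1-i)=1-{\widetilde g}(i)$. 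Once that local bookkeeping is pinned down (most cleanly by working with the $G_m$-quotient picture of strand diagrams bouncing between the two walls, where a wall-crossing is literally a strand reaching the wall), the inequality $\weight_1({\widetilde f}\circ{\widetilde g})\geq \weight_1({\widetilde f})+\weight_1({\widetilde g})$ drops out, and combined with step (2) finishes the proof. A safer alternative, if the direct combinatorial argument gets unwieldy, is to pass to the known additivity statement on the ``Cross'' sets used in establishing $\cross({\widetilde g}\circ{\widetilde f})\leq\cross({\widetilde g})+\cross({\widetilde f})$ from \cite{Pong}, and read off the wall contributions from the explicit injection used there.
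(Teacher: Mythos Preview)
Your strategy is correct and matches the paper's approach: the paper also proceeds by contradiction, taking a strand $i$ with (say) $i>\OneHalf$, ${\widetilde g}(i)<\OneHalf$, ${\widetilde f}({\widetilde g}(i))>\OneHalf$, and then uses the mirror partner $1-i$ (via ${\widetilde g}(1-i)=1-{\widetilde g}(i)$, ${\widetilde f}{\widetilde g}(1-i)=1-{\widetilde f}{\widetilde g}(i)$) to exhibit a pair that crosses in both ${\widetilde g}$ and ${\widetilde f}$, hence does not cross in the composite---contradicting crossing additivity. Your anticipated obstacle in step (4) dissolves once you realize you only need \emph{one} such double-crossing pair, not a full bijection: the pair $\{i,1-i\}$ already does the job, and the paper's entire argument fits in a few lines.
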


\begin{proof}
  If $\weight_1({\widetilde f}\circ {\widetilde g})\neq \weight_1({\widetilde f}))+\weight_1({\widetilde g})$, then there exists some $1<i$ with
  \begin{equation}
    \label{eq:cr1}
    1<i \text{~such that~} {\widetilde g}(i)<1; 
  \end{equation}
  and 
  \begin{equation}
    \label{eq:cr2}
    {\widetilde f}\circ {\widetilde g}(i)>1.
  \end{equation}
  On the other hand, Equation~\eqref{eq:cr1} implies that
  \[ {\widetilde g}(1-i)=1-{\widetilde g}(i)>1-i;\]
  while Equation~\eqref{eq:cr2} implies that
  \[ {\widetilde f}\circ{\widetilde g}(1-i)>{\widetilde g}(1-i);\]
  i.e. $i$ and $1-i$ have a crossing in ${\widetilde g}$, while
  ${\widetilde g}(i)$ and ${\widetilde g}(1-i)$ have a crossing in
  ${\widetilde f}$.

  An analogous argument works for $\weight_m$.
\end{proof}

\section{The wrapped diagram}
\label{sec:HeegPong}

Our aim here is to give a particularly convenient description of the wrapped
diagram for $\Sym^k(M)$, equipped with $\Lambda_\x$.

Consider the plane  $\R^2$, decorated with the following data:
\begin{itemize}
  \item  an infinite grid of
    vertical lines 
    ${\widetilde\alphas}=\{{\widetilde \alpha}_i=i\times \R\}_{i\in\Z}$;
  \item horizontal lines 
    ${\widetilde\betas}=\{{\widetilde \beta}_i=\R\times i\}_{i\in\Z}$;
  \item an infinite set of punctures at the points $\{(\OneHalf +
    i,\OneHalf +i)\}_{i\in \Z}$, so that $(\OneHalf+i,\OneHalf+i)$ is 
    labeled by $O_j$, where $j=Q_2(\OneHalf+i)$, with $Q_2$ as in Equation~\eqref{eq:DefQ2}.
    \end{itemize}

    The symmetry group of this picture is generated by the two
    $180^\circ$-rotations with fixed point at $(\OneHalf,\OneHalf)$,
    and the one with fixed point at $(m-\OneHalf,m-\OneHalf)$. Let
    ${\mathbb G}_m$ denote this group of rigid motions. Note that ${\mathbb G}_m\cong G_m$, induced by the
    restriction of ${\mathbb G}_m$ to the diagonal line in $\R^2$.

The quotient space $\R^2/{\mathbb G}_m$ is homeomorphic to the disk $\HD$ with
two order $2$ orbifold points, which are the points marked $O_1$ and
$O_m$. There are an addition $m-2$ marked points, labeled $O_i$ for
$i=2,\dots,m-1$. 

The vertical lines $\Z\times \R=\{{\widetilde \alpha}_i\}_{i\in\Z}$
project to $m-1$ embedded lines
$\{\alpha_i\}_{i=1}^{m-1}$ in $\HD$. Similarly, the horizontal lines
$\{{\widetilde\beta}_i\}_{i\in\Z}$, project to $m-1$
embedded lines $\{\beta_i\}_{i=1}^{m-1}$ in $\HD$. We label the lines in
$\HD$ so that $\alpha_i$ is the image of ${\widetilde \alpha}_j$, for
any $j\in\Z$ with $Q_1(j)=i$, with $Q_1$ as in Equation~\eqref{eq:DefQ1};
similarly, $\beta_i$ is the image of ${\widetilde \beta}_j$.

\begin{lemma}
  \label{lem:WrapDiagram}
  Consider $\HD=\R^2/{\mathbb G}_m$, equipped with $\{\alpha_i\}_{i=1}^{m-1}$,
  $\{\beta_i\}_{i=1}^{m-1}$, and the markings $\{O_i\}_{i=1}^m$. 
  This is a diagram for the wrapping of $\{\alpha_i\}_{i=1}^{m-1}$.
\end{lemma}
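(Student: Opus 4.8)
The plan is to unwind what "a diagram for the wrapping of $\{\alpha_i\}$" means and then verify the two ingredients it requires: first, that the $\beta$-curves in $\HD$ are obtained by applying the Liouville/Hamiltonian wrapping flow to the $\alpha$-curves (up to isotopy through admissible Lagrangians), and second, that the symplectic manifold carrying this diagram is the relative Fukaya setting of $\Sym^k(M)$ from Section~\ref{sec:Wrap}. The key observation to exploit is the identification of $\HD = \R^2/{\mathbb G}_m$ with the quotient described in Section~\ref{sec:Wrap}: the universal cover $\R^2$ with its grid of vertical and horizontal lines maps, via the coordinatewise branched cover $p$ (with $p$ as in Section~\ref{sec:Wrap}), to a picture in $\C$, and the orbifold quotient by ${\mathbb G}_m$ corresponds exactly to passing to $M = \C \setminus {\mathbf P}$ with its two distinguished orbifold/boundary points at $\pm 1$ labeled $O_1, O_m$. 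So the first step is to write down this correspondence explicitly: identify the diagonal line in $\R^2$ (the fixed locus of the reflection subgroup $G_m$) with the circle $\{0\} \times S^1$ under $p$, identify the vertical lines ${\widetilde\alpha}_i$ with the translates of $\R \times \x$-type loci, and check that ${\mathbb G}_m \cong G_m$ acts compatibly, as asserted in the text.

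Next I would show that the horizontal lines ${\widetilde\beta}_i$ are precisely the images of the vertical lines ${\widetilde\alpha}_i$ under the Hamiltonian wrapping flow $\Phi_H$ associated to $H = r^2$. In the cylinder coordinates $(t,\theta)$ the Hamiltonian flow \eqref{eq:HamiltonianFlow} rotates the $\theta$-coordinate by an amount proportional to $t$, which is exactly what takes the submanifold $\{(t, \x)\}$ of constant $\theta$ to a submanifold of the form $\{(t, e^{i\phi t/2}\x)\}$, i.e. to $\Lambda_\x^\phi$; and in the planar picture $\R^2$ a vertical line $i \times \R$ sheared by a linear-in-height rotation and then pushed to the limiting profile becomes, after large wrapping, the horizontal line $\R \times i$. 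The content here is that the combinatorial picture drawn in Section~\ref{sec:HeegPong} — vertical $\alpha$'s and horizontal $\beta$'s — is the honest large-wrapping limit of the family $\Lambda_\x^\phi$ of Proposition~\ref{prop:IdentifyLagrangians}. I would phrase this as: $\beta_i$ is isotopic, through Lagrangians admissible for the wrapped relative Fukaya category, to the image of $\alpha_i$ under $\Phi_H$ for $H$ sufficiently large; this is where Proposition~\ref{prop:IdentifyLagrangians} (specifically its second bullet, the exact Hamiltonian diffeomorphism $\Phi^s$) and Equation~\eqref{eq:ImageUnderLiouville} are invoked, since together they let us interpolate between the given $\Lambda_\x^\phi$ and the model curves, while keeping track of the divisors $\{O_i\} \times \Sym^{k-1}(\HD)$.

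Finally I would check that the marked points are correctly placed: the punctures of $\R^2$ at $(\OneHalf + i, \OneHalf + i)$ lie on the diagonal, hence descend under $p$ and the ${\mathbb G}_m$-quotient to the basepoint set ${\mathbf P} = \{j + \OneHalf\}$, and the labeling by $O_{Q_2(\OneHalf + i)}$ matches the labeling convention for $v_1,\dots,v_m$ fixed in Section~\ref{sec:Wrap} — this is just bookkeeping with the maps $Q_1, Q_2$ of \eqref{eq:DefQ1}–\eqref{eq:DefQ2}. Taking the $k$-fold symmetric product of this quotient picture then reproduces $\Sym^k(M)$ with the Lagrangians $\Lambda_\x$ and the divisors $\{O_i\} \times \Sym^{k-1}(\HD)$, so the stated diagram is indeed a wrapping diagram.

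I expect the main obstacle to be the second step: making precise, rather than merely schematic, the claim that the straightened "horizontal line" picture is the correct large-wrapping limit and that this limit is achieved through a path of Lagrangians admissible for the relative category (exact, conical at infinity, and meeting the divisors suitably). One has to be careful that the wrapping flow, which in the cylinder model is the radial/angular flow of $H = r^2$, genuinely carries $\alpha_i$ past all the relevant punctures in the right order and with the right winding, and that the orbifold points at $\pm 1$ (the fixed loci of the order-two rotations) do not cause pathologies — in the quotient $\HD$ these are orbifold points, so one must either work equivariantly upstairs in $\R^2$, as the construction suggests, or justify the orbifold Floer-theoretic setup directly. The cleanest route is to do everything ${\mathbb G}_m$-equivariantly on $\R^2$ (or its $\Sym^k$), where the wrapping flow is transparent, and only at the end pass to the quotient.
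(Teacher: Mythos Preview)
Your proposal is correct and follows essentially the same approach as the paper: identify $\HD$ with the disk of Section~\ref{sec:Wrap} via an intermediate cylinder quotient, invoke Proposition~\ref{prop:IdentifyLagrangians} to equip $\Sym^k(\HD)$ with its symplectic structure and to identify the $\alpha_\x$ with the $\Lambda_\x$, and then observe that the wrapping flow carries the $\alpha$'s to the $\beta$'s. The paper's version is in fact briefer than your plan --- it makes the two-step quotient $\R^2 \to A \to \HD$ (first by the index-two translation subgroup of ${\mathbb G}_m$, then by the residual $\Z/2$ as a branched double cover over $O_1, O_m$) explicit and then simply asserts the flow statement, without the additional bookkeeping you outline for the marked points or the admissibility of the isotopy; your anticipated ``main obstacle'' is handled in the paper by direct appeal to Proposition~\ref{prop:IdentifyLagrangians} and the explicit flow formula~\eqref{eq:HamiltonianFlow}, with no further argument.
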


\begin{proof}
  What we mean is the following.  Consider $\HD$ as above, equipped
  with the vertical circles ${\widetilde\alphas}$. ${\mathbf G}_m$
  contains an index two subgroup of translations, generated by
  $(x,y)\mapsto(x+2m,y+2m)$.  Consider the cylinder $A$ obtained as
  the quotient
  \begin{align}
    A=\frac{\R\times \R}{(2m,2m)\cdot \Z}.
  \end{align}
  Moreover, there is a branched covering map from $A$ to $\HD$,
  with branching at $O_1$ and $O_m$.
  
  Equip $\Sym^k(\HD)$ with the symplectic structure from
  Proposition~\ref{prop:IdentifyLagrangians}, chosen so that the images
  in $\Sym^k(\HD)$ of the 
  manifolds $\alpha_{x_1}\times\dots\times \alpha_{x_k}\subset
  \HD^{\times k}$ for all subsequences $\x\subset \{1,\dots,m-1\}$ are
  Lagrangian.  Note that our explicit parametrizations here differ
  from the ones described around
  Proposition~\ref{prop:IdentifyLagrangians} by a linear
  transformation. In particular, these manifolds are equivalent to the
  submanifolds $\Lambda_\x$ from that proposition. Moreover, 
  the Liouville flow carries
  $\alphas_{x_1}\times\dots\times\alpha_{x_k}$ to
  $\beta_{x_1}\times\dots\times\beta_{x_k}$.  
  after some (positve) time.
\end{proof}

See Figures~\ref{fig:QuotDiag} and~\ref{fig:QuotDiag2} for examples.

\begin{figure}[ht]
\input{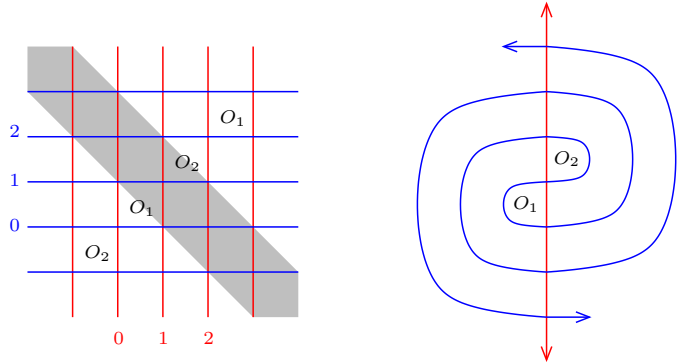}
\caption{\label{fig:QuotDiag} {\bf{Heegaard diagrams.}}
The quotient of the infinite grid diagram on the left is the diagram (with $m=2$) on the right. On the left, we have shaded a fundamental domain for the
${\mathbb G}_2$ action.}
\end{figure}

\begin{figure}[ht]
\input{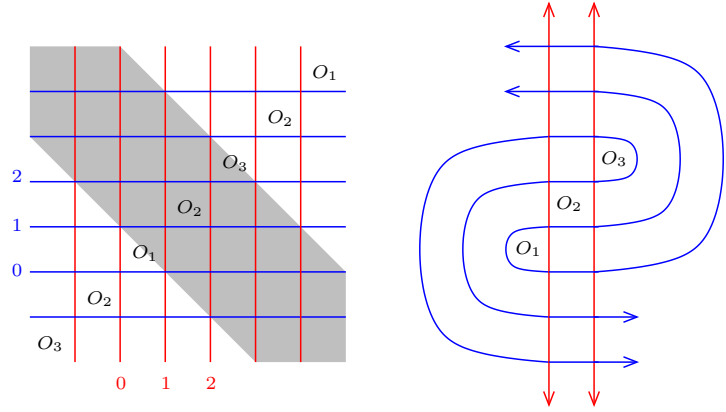}
\caption{\label{fig:QuotDiag2} {\bf{Heegaard diagrams.}}
The case where $m=3$.}
\end{figure}

As usual, a {\em $k$-fold Heegaard state} is a $k$-tuple of points (for some $0<k<m$), with the property that each point lies on $\alpha_i\cap \beta_j$,
no two points lie on the same $\alpha_i$, and no two points lie on the
same $\beta_j$.

Given a lifted partial permutation ${\widetilde f}$, we can form its
graph 
\[ \Gamma_{\widetilde f}=\{(i,{\widetilde f}(i))\mid i\in {\widetilde S}\}\subset \R^2.\]
Clearly $\Gamma_{\widetilde f}$ is invariant under ${\mathbb G}_m$;
as such we can form the associated subset $\x({\widetilde f})\subset \HD$.

\begin{lemma}
  The above map sets up a one-to-one correspondence between ($k$-element) lifted partial permutations and ($k$-fold) Heegaard states for $\HD$.
\end{lemma}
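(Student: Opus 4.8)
The plan is to verify that the map $\widetilde f \mapsto \x(\widetilde f)$ is well-defined, lands in the set of $k$-fold Heegaard states, and is a bijection by exhibiting an explicit inverse. First I would check well-definedness: the graph $\Gamma_{\widetilde f}=\{(i,\widetilde f(i)) : i \in \widetilde S\} \subset \R^2$ meets each vertical line $\widetilde\alpha_i$ in at most one point (since $\widetilde f$ is a function) and each horizontal line $\widetilde\beta_j$ in at most one point (since $\widetilde f$, being injective modulo $G_m$ on a $G_m$-invariant domain, is actually injective as a map $\widetilde S \to \Z$). Since $\widetilde S$ is $G_m$-invariant and $\widetilde f$ is $G_m$-equivariant, $\Gamma_{\widetilde f}$ is $\mathbb G_m$-invariant under the identification $\mathbb G_m \cong G_m$ acting on the diagonal; hence it descends to a subset $\x(\widetilde f) \subset \HD = \R^2/\mathbb G_m$. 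I would then note that the condition ``$\widetilde S/G_m$ consists of $k$ elements'' says exactly that $\x(\widetilde f)$ uses $k$ of the $\alpha$-lines, and the injectivity of the induced map $\widetilde S/G_m \to \Z/G_m$ says that $\x(\widetilde f)$ uses $k$ distinct $\beta$-lines; together with the at-most-one-point-per-line properties this is precisely the definition of a $k$-fold Heegaard state.

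Next I would construct the inverse. Given a $k$-fold Heegaard state $\x \subset \HD$, lift each of its $k$ points to its full $\mathbb G_m$-orbit in the grid $\R^2$; since each point lies at some $\alpha_i \cap \beta_j = (\text{image of } \widetilde\alpha_{i'} \cap \widetilde\beta_{j'})$, the preimage is a $\mathbb G_m$-invariant set of lattice points $\Gamma \subset \Z^2$. Define $\widetilde S \subset \Z$ to be the set of first coordinates appearing in $\Gamma$ and define $\widetilde f(i)$ to be the second coordinate of the unique point of $\Gamma$ with first coordinate $i$ — well-defined because no two points of the Heegaard state share an $\alpha_i$, so no two points of $\Gamma$ share a vertical line. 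One checks $\widetilde S$ is $G_m$-invariant and $\widetilde f$ is $G_m$-equivariant directly from $\mathbb G_m$-invariance of $\Gamma$ and the isomorphism $\mathbb G_m \cong G_m$; that $\widetilde S/G_m$ has $k$ elements is clear since $\x$ has $k$ points; and injectivity of $\widetilde f$ on $\widetilde S/G_m$ follows from the requirement that no two points of $\x$ lie on the same $\beta_j$. These two assignments are visibly mutually inverse, since both amount to passing between a $\mathbb G_m$-invariant subset of the lattice $\Z^2$ and its quotient, packaged either as (domain, function) or as (set of points downstairs).

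The main obstacle — really the only subtle point — is the bookkeeping around the order-two orbifold/branch points of $\R^2/\mathbb G_m$, i.e. the fixed points of the rotations generating $\mathbb G_m$, lying over $O_1$ and $O_m$. One must confirm that the $\alpha$-lines and $\beta$-lines through (or near) these fixed points still descend to honest embedded arcs $\alpha_i, \beta_i$ (as asserted in the setup preceding Lemma~\ref{lem:WrapDiagram}), that a Heegaard state point never sits on a fixed point of $\mathbb G_m$ (its $\mathbb G_m$-orbit then genuinely has the expected size and the ``no two points on the same line'' conditions translate cleanly), and that the quotient maps $Q_1, Q_2$ of Equations~\eqref{eq:DefQ1} and~\eqref{eq:DefQ2} are exactly compatible with the labelings of $\alpha_i$, $\beta_i$, and the punctures $O_j$ as specified. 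Once this compatibility is in place, the correspondence is forced and the remaining verifications are routine; I would organize the write-up as: (1) $\widetilde f \mapsto \x(\widetilde f)$ is well-defined and yields a Heegaard state; (2) the orbit-lifting construction yields a lifted partial permutation; (3) the two are inverse.
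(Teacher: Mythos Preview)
Your proposal is correct and is precisely the argument the paper has in mind; the paper's own proof reads in its entirety ``The proof is straightforward,'' and what you have written is a careful unpacking of that straightforwardness. Your observation that $G_m$ acts freely on $\Z$ (the reflection centers being half-integers) is the one small point needed to make the inverse construction clean, and you have it.
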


\begin{proof}
  The proof is straightforward.
\end{proof}

\begin{defn}
  \label{def:doms2}
  Suppose that $\HD$ is a surface, equipped with two sets of curves
  $\alphas=\{\alpha_i\}$ and $\betas=\{\beta_i\}$. We think of the curves as
  giving $\HD$ a $CW$-complex structure, with $0$-cells the
  intersection points between $\alpha_i$ and $\beta_j$, $1$-cells the arcs
  in $\alpha_i$ and $\beta_j$, and two-chains the components of
  $\HD\setminus (\alphas\cup\betas)$. Thus, a two-chain can be thought of
  as an assignment of integers to each component of
  $\HD\setminus(\alphas\cup\betas)$. Fix some intersection point $x$ of
  $\alpha_i$ with $\beta_j$.  We say that $x$ is a {\em{corner}} if
  the local multiplicities $A$, $B$, $C$, and $D$ around $x$, as
  pictured in Figure~\ref{fig:Cornerless}, satisfy
  $A+D\neq B+C$. In fact, we
  say that $x$ is an {\em initial $(\alpha,\beta)$ corner} if $B+C=A+D+1$;
  if $B+C=A+D-1$, we say $x$ is a {\em terminal $(\alpha,\beta)$-corner}.
  A domain is a {\em cornerless domain} if it has no corner.
\end{defn}

\begin{figure}[ht]
\input{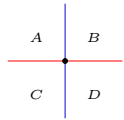}
\caption{\label{fig:Cornerless} {\bf{Corner conventions.}}
}
\end{figure}

The space of cornerless domains is an abelian group.  If $\x$ and $\y$
are Heegaard states, let $\doms(\x,\y)$ denote the space of domains
with initial corner at the components of $\x\setminus(\x\cap\y)$ and terminal corner at
the components of $\y\setminus(\x\cap\y)$. 
(Algebraically, $\doms(\x,\y)$ is an affine
space for the space of cornerless domains.) As explained
in~\cite{HolDisk}, for $k\geq 3$, 
$\doms(\x,\y)$ is identified with a space of relative
homotopy classes of Whitney disks connecting $\x$ to $\y$, denoted
there $\pi_2(\x,\y)$.

\begin{lemma}
  \label{lem:UniqueHomotopyClass}
  Given Heegaard states $\x$ and $\y$, corresponding to lifted partial permutations
  $({\widetilde f},{\widetilde S})$ and
  $({\widetilde g},{\widetilde T})$,
  there is a $\phi\in\doms(\x,\y)$
  with compact support if and only if
  ${\widetilde S}={\widetilde T}$ and
  ${\widetilde f}({\widetilde S})={\widetilde g}({\widetilde T})$.
  Moreover, if $\phi$ exists, then it is unique.
\end{lemma}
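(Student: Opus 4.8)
The plan is to reduce everything to the combinatorics of the infinite planar grid diagram $\R^2$ upstairs, where domains are transparent, and then push down via the branched cover $\R^2\to\HD$. First I would observe that since the $\widetilde\alpha_i$ and $\widetilde\beta_j$ are the coordinate lines, a two-chain on $\R^2$ is literally a function $\Z^2\to\Z$ assigning a multiplicity to each unit square, and the corner condition at a lattice point $(a,b)$ — where the four surrounding multiplicities are $A$ (NW), $B$ (NE), $C$ (SW), $D$ (SE) — reads $B+C-A-D\in\{-1,0,+1\}$, with $+1$ an initial corner and $-1$ a terminal corner. A cornerless two-chain on $\R^2$ with the $\mathbb G_m$-symmetry and compact support on the quotient corresponds to a locally constant function that is constant in each row and column direction; chasing the constraints one finds that a compactly supported (on $\HD$) such chain must be the zero chain, so $\cald(\x,\y)$ on $\HD$, if nonempty, is a torsor over $0$, giving uniqueness for free once existence is settled. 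Hence the real content is the existence criterion.

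For existence, I would set up the correspondence concretely: $\x$ and $\y$ are given by the graphs $\Gamma_{\widetilde f}$ and $\Gamma_{\widetilde g}$, i.e.\ $\x$ has a point at each lattice point $(i,\widetilde f(i))$ for $i\in\widetilde S$, and similarly for $\y$. A domain $\phi\in\cald(\x,\y)$ with compact support on $\HD$ is exactly a $\mathbb G_m$-invariant two-chain on $\R^2$ whose set of corners is precisely: initial corners at $\Gamma_{\widetilde f}\setminus(\Gamma_{\widetilde f}\cap\Gamma_{\widetilde g})$ and terminal corners at $\Gamma_{\widetilde g}\setminus(\Gamma_{\widetilde f}\cap\Gamma_{\widetilde g})$, and which descends to something compactly supported. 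For such a two-chain to exist it is necessary that for each row $r$, the number of initial minus terminal corners in that row is zero (since the multiplicity function, read left to right along a row of squares, starts and ends at the same value — namely $0$ far out — and jumps by $\pm1$ exactly at corners), and likewise for each column; since $\widetilde S$ is the set of rows occupied by $\x$-points and $\widetilde f(\widetilde S)$ the set of columns, this forces $\widetilde S=\widetilde T$ (rows) and $\widetilde f(\widetilde S)=\widetilde g(\widetilde T)$ (columns). Conversely, when $\widetilde S=\widetilde T$ and $\widetilde f(\widetilde S)=\widetilde g(\widetilde T)$, I would construct the domain explicitly: both $\widetilde f$ and $\widetilde g$ are bijections between the same two $G_m$-invariant sets, so $\widetilde g\circ\widetilde f^{-1}$ is a $G_m$-equivariant permutation of the finite set $\widetilde S/G_m$ lifted to $\Z$; factor it (or just directly assign) to get a two-chain whose row and column ``running sums'' have the prescribed jumps — e.g.\ define the multiplicity of the square with lower-left corner $(a,b)$ as the number of pairs $i\in\widetilde S$ with $i\le a$ and $\widetilde f(i)>b$ minus the analogue with $\widetilde g$, suitably symmetrized, and check it has compact support on $\HD$ and exactly the right corners.

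I expect the main obstacle to be the bookkeeping at the two orbifold points $O_1$ and $O_m$: the branched double cover $\R^2\to\HD$ means a single square near a fixed point of $\mathbb G_m$ is covered with multiplicity two, and the local corner count at intersection points lying on the diagonal (where a pair $(i,j)$ may have $[i]=[j]$) needs to be interpreted correctly in the quotient orbifold. I would handle this by doing the entire argument equivariantly upstairs on $\R^2$ — where there are no orbifold subtleties — working with $\mathbb G_m$-invariant chains and $\mathbb G_m$-finite corner sets, and only at the very end translating ``compactly supported on $\HD$'' into ``$\mathbb G_m$-invariant and vanishing outside a $\mathbb G_m$-cofinite region of $\R^2$.'' The compatibility of the corner conditions with the $180^\circ$ rotations is automatic since the rotations preserve the $\alpha$–$\beta$ pairing; the only thing to verify is that the explicit two-chain I write down is genuinely $\mathbb G_m$-invariant, which follows from the $G_m$-equivariance of $\widetilde f$ and $\widetilde g$. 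Uniqueness then follows from the first paragraph: the difference of two candidate domains is a $\mathbb G_m$-invariant cornerless chain with compact support on $\HD$, hence identically zero.
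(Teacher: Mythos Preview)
Your approach is correct and complete, but it takes a more combinatorial route than the paper. The paper's proof is a three-line topological argument working directly on the quotient $\HD$: the condition $\widetilde S=\widetilde T$ is exactly the condition that the components of $\x$ and $\y$ can be joined by a (unique) relative $1$-chain $A$ supported on the $\alpha$-curves, and $\widetilde f(\widetilde S)=\widetilde g(\widetilde T)$ is the analogous condition for a $1$-chain $B$ in the $\beta$-curves; then $A-B$ is a $1$-cycle, and since $\HD$ is contractible it bounds a unique $2$-chain. Both existence and uniqueness drop out of $H_1(\HD)=H_2(\HD)=0$ at once, with no need to treat the orbifold points specially or to write down an explicit multiplicity formula.

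Your argument recovers each of these steps in a hands-on way: the row/column balance condition is the combinatorial shadow of ``$\partial\phi\cap\alpha_i$ is a compact $1$-chain in $\R$, hence has degree-zero boundary,'' your explicit running-sum formula is one particular antiderivative of the $1$-cycle $A-B$, and your cornerless-chain computation (discrete mixed partial vanishes, so $n(a,b)=p(a)+q(b)$, forced to zero by support) is an explicit proof that $H_2$ of this CW structure vanishes. The advantage of your approach is that it yields a concrete formula for $\phi$ that is reusable later (and indeed the paper does something close to this in Lemma~\ref{lem:PositiveDomains}, where local multiplicities are expressed as counts of strands); the advantage of the paper's approach is brevity and the fact that the orbifold worry you flagged never arises, since contractibility of $\HD$ is insensitive to the branch points.
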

\begin{proof}
  Given $\x$ and $\y$, consider the corresponding lift to $\R^2$.  The
  hypothesis that ${\widetilde S}={\widetilde T}$ is equivalent to the
  condition that we can connect $\x$ to $\y$ by a path $A$ in
  $\alphas=\{\alpha_i\}_{i=1}^{m-1}$.  (When the path exists,
  its uniqueness, as a relative one-chain in $\HD$, is obvious.) 
  Similarly, the condition that ${\widetilde
    f}({\widetilde S})={\widetilde f}({\widetilde T})$ is equivalent
  to the condition that we can connect ${\widetilde y}$ to
  ${\widetilde \x}$ (uniquely) by a path $B$ inside $\betas$. By
  construction, $\partial A = \partial B$, so $A-B=\partial D$, for
  some two-chain. Uniqueness follows from contractability of $\HD$.
\end{proof}

Suppose that $({\widetilde f},{\widetilde S})$ is a lifted partial
permutation with graph ${\widetilde \x}$ and Heegaard state $\x$.  Let
$A$ be the set of vertical lines in $\R^2$ that connect ${\widetilde x}$
to the diagonal line.  Given $1\leq i\leq m$, the weight of 
${\widetilde S}$ at $i$ can be
interpreted as the number of times $A$ crosses the horizontal line 
$\R\times (i-\OneHalf)$. 

\begin{lemma}
  \label{lem:IdentifyWeights}
  Suppose that $({\widetilde f},{\widetilde S})$ and 
  $({\widetilde g},{\widetilde T})$ are lifted
  partial permutations with corresponding Heegaard states $\x$ and
  $\y$, which can be connected by some $\phi\in \doms(\x,\y)$. Then,
  the local multiplicity of $\phi$ at $O_i$ coincides with
  $\weight(\widetilde f)-\weight(\widetilde g)$.
\end{lemma}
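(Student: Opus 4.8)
The plan is to reduce the statement to the explicit combinatorial description of the weight vectors set up in Section~\ref{sec:LiftPerm} and the explicit description of the domain $\phi$ furnished by Lemma~\ref{lem:UniqueHomotopyClass}. Recall that, by the discussion just before Lemma~\ref{lem:IdentifyWeights}, if $\x$ comes from $({\widetilde f},{\widetilde S})$ with graph ${\widetilde \x}\subset\R^2$, then $\weight_i({\widetilde S})$ (more precisely the $i$-th entry of the weight vector attached to the state) is the number of times the relative $1$-chain $A_{\widetilde f}$ — the union of vertical segments joining ${\widetilde \x}$ to the diagonal — crosses the horizontal line $\R\times(i-\OneHalf)$. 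Likewise $A_{\widetilde g}$ is the analogous $1$-chain for $\y$. The key point from the proof of Lemma~\ref{lem:UniqueHomotopyClass} is that, lifting to $\R^2$, the domain $\phi$ is the unique two-chain $D$ with $\partial D = A - B$, where $A$ is a path in $\alphas$ from $\x$ to $\y$ and $B$ is a path in $\betas$ from $\y$ to $\x$. I would choose these representatives compatibly: take $A = A_{\widetilde f} - A_{\widetilde g}$ (the vertical $1$-chain obtained by running down from ${\widetilde \x}$ to the diagonal and back up to ${\widetilde \y}$, after suitably matching endpoints using ${\widetilde S}={\widetilde T}$), and take $B$ to be the corresponding horizontal $1$-chain joining ${\widetilde \y}$ to ${\widetilde \x}$ along $\betas$, which exists since ${\widetilde f}({\widetilde S})={\widetilde g}({\widetilde T})$.

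With that choice, I would compute the local multiplicity $n_{O_i}(\phi)$ by the standard winding-number/intersection argument: the multiplicity of a domain $D$ at a point $p$ equals the signed count of how many times $\partial D$ winds around $p$, which in our planar picture is computed by intersecting $\partial D$ with a generic ray emanating from the lift of $O_i$. Since $O_i$ sits at a point $(\OneHalf+j,\OneHalf+j)$ on the diagonal with $Q_2(\OneHalf+j)=i$, and the relevant puncture for the variable $v_i$ in the quotient $\HD$ corresponds to the line $\R\times(i-\OneHalf)$ under the identification of Section~\ref{sec:HeegPong}, a horizontal ray from $O_i$ meets only the vertical part $A$ of the boundary, not the horizontal part $B$. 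Intersecting that ray with $A = A_{\widetilde f}-A_{\widetilde g}$ gives exactly $\weight_i({\widetilde f}) - \weight_i({\widetilde g})$ by the crossing-number interpretation recalled above. I would also need to observe that this count, which a priori is taken in $\R^2$, descends correctly to $\HD = \R^2/{\mathbb G}_m$: the orbifold points $O_1$ and $O_m$ require a small separate bookkeeping argument (the factor of $\OneHalf$ in the definition of $\weight_j$ is precisely what accounts for the order-two branching there), but away from those two points the covering is unramified and the count is transparent.

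The main obstacle I anticipate is the branched-cover bookkeeping at $O_1$ and $O_m$: making precise the claim that a ``half-integer'' crossing count upstairs matches an ordinary local multiplicity downstairs near an orbifold point, and checking signs and the convention that $n_{O_i}(\phi)$ is non-negative (which forces a sign choice in the ordering of $\widetilde f, \widetilde g$, matching the one already present in Lemma~\ref{lem:UniqueHomotopyClass} via $\doms(\x,\y)$ versus $\doms(\y,\x)$). The interior markings $O_2,\dots,O_{m-1}$ are genuinely routine once the one-chain representatives are fixed. I would therefore organize the write-up as: (i) fix the lifts and the compatible $1$-chain representatives $A$, $B$; (ii) state the ray-intersection formula for local multiplicities; (iii) verify it for the generic $O_i$, $2\le i\le m-1$; (iv) handle $O_1$ and $O_m$ by a local model near a $\Z/2$ orbifold point, absorbing the $\OneHalf$ in the definition of $\weight_1$ and $\weight_m$. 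Lemma~\ref{lem:NoOuterUs} is the analogue of step (iv) on the algebra side and can be cited as a consistency check.
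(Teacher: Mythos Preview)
Your proposal is correct and follows essentially the same approach as the paper: compute the local multiplicity at $O_i$ as the intersection number of $\partial\phi$ with a horizontal ray from the lift of $O_i$, and identify that count with $\weight_i(\widetilde f)-\weight_i(\widetilde g)$ via the crossing-number interpretation of the weight. The only refinement the paper offers over your outline is that the orbifold bookkeeping at $O_1$ and $O_m$ is handled uniformly rather than as a separate case: the horizontal line $\R\times(i-\OneHalf)$ is split into two rays $\widetilde r_i,\widetilde r_i'$ emanating from the diagonal point, each of which projects to a ray in $\HD$ from $O_i$ to infinity avoiding $\betas$, and for $i=1,m$ these two rays have the same image---so the factor of $\OneHalf$ in the weight definition is absorbed automatically without a local-model argument.
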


\begin{proof}
  Let $\x_0$ denote the Heegaard states corresponding to the identity
  map on ${\widetilde S}$.  Let $A$ be (oriented) vertical path from
  $\x$ to $\x_0$, so that $\weight(\x)$ counts half the number of
  times $A$ crosses $\R\times (i-\OneHalf)$. We can think of $\R\times
  (i-\OneHalf)$ as the union of two rays ${\widetilde r}_i$ and
  ${\widetilde r}_i'$ starting at $(i-\OneHalf,i-\OneHalf)$. Then,
  $\weight(\x)$ is one half the oriented intersection number of $r_i$
  with $A$ plus the oriented intersection number of $r_i'$ with $A$.

  Let $r_i$ resp. $r_i'$ denote the image in ${\mathbb H}$ of
  ${\widetilde r}_i$ resp. ${\widetilde r}'_i$. Observe that $r_i$ and
  $r_i'$ are paths in ${\mathbb H}$ from $O_i$ to infinity that avoid
  $\betas$. (Indeed, for $i=1$ and $m$, $r_i=r_i'$.) Thus,
  $\weight(\x)-\weight(\y)$ is the algebraic intersection number of
  $r_i$ with $\partial \phi$, which in turn coincides with the winding
  number of $\partial \phi$ around $O_i$, and hence the local
  multiplicity of $\phi$ at $O_i$. Since the same remarks apply for
  $r_i'$, the result follows.
\end{proof}

\begin{figure}[ht]
\input{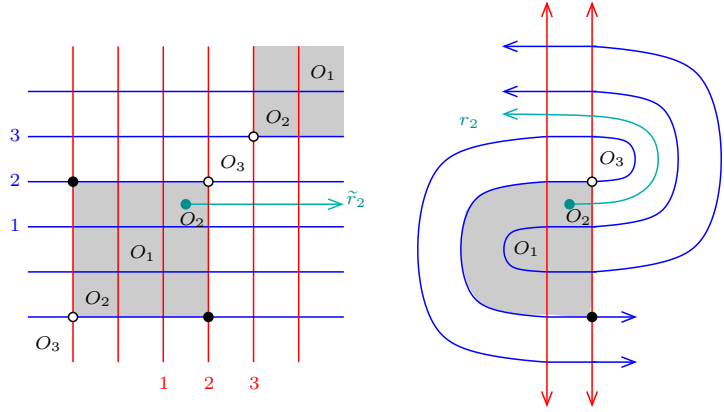}
\caption{\label{fig:Bigon} {\bf{A lifted bigon.}}  Consider $m=3$ and
  $k=1$. The black dot corresponds to the lifted partial permutation
  sending $2$ to $-1$, while the white dots corresponds to the map
  sending $2$ to $2$.  There is a (shaded) bigon from the black to the white
  dot with multiplicity $1$ at $O_1$ and $O_2$. The ray $r_2$ is indicated in the picture.}
\end{figure}

Under this correspondence, $\Cross({\widetilde f})$ corresponds
${\mathbb G}_m$ orbits of pairs of points in $\x$ of the form
$(x_1,y_1),(x_2,y_2)$ so that $x_1<x_2$ and $y_1>y_2$.

Thus, for each crossing, there is a unique ${\mathbb G}_m$-orbit of
embedded rectangle $r$ in $\R^2$, whose upper left corner is at $(x_1,y_1)$,
and whose lower right corner is at $(x_2,y_2)$. The graph of
${\widetilde f}_{\langle i,j\rangle}$ is the subset of
$\Z\times \Z$ obtained from ${\widetilde\x}$ (the graph of ${\widetilde f}$)
by removing the ${\mathbb
  G}_m$-orbits of $(x_1,y_1)$ and $(x_2,y_2)$, and replacing them with
the ${\mathbb G}_m$ orbits of $(x_1,y_2)$ and $(x_2,y_1)$. 

The condition that $\cross({\widetilde f}_{\langle
  i,j\rangle})=\cross({\widetilde f})-1$ is equivalent to the
condition that $r$ is {\em empty}: i.e. it does not contain any points
in $\x$ in its interior. (This is equivalent to the condition that the
${\mathbb G}_m$ translates of $r$ are a collection of disjoint rectangles,
none of which contains a component of ${\widetilde x}$ in its interior.)

\begin{lemma}
  \label{lem:IdentifyDifferential}
  Given a lifted partial permutation ${\widetilde f}$ and graph
  ${\widetilde\x}$, there is a one-to-one
  correspondence between the resolutions of the crossings in
  ${\widetilde f}$ and ${\mathbb G}_m$-orbits of embedded rectangles
  in $\R^2$ whose upper left and lower right corners are on ${\widetilde x}$.
  Moreover, the following conditions are equivalent:
  \begin{enumerate}
    \item Each (or any) rectangle in $\R^2$ in the ${\mathbb G}_m$ orbit 
      is empty.
    \item The image of the rectangle in $\HD$ is an empty 
      bigon (in the case where ${\mathbb G}_m$-orbit of the rectangle has isotropy group ${\mathbb Z}/2{\mathbb Z}$) or an empty rectangle
      (when the isotropy group is trivial).
    \item 
      The number of crossings in the resolution is one less than the number of 
      crossings in ${\widetilde f}$.
    \end{enumerate}
\end{lemma}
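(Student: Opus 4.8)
The plan is to treat the asserted bijection first, then prove $(1)\Leftrightarrow(3)$ by a direct count of inversions and $(1)\Leftrightarrow(2)$ by a covering-space argument for $\R^2\to\HD$. The bijection is essentially assembled from the discussion preceding the lemma: by definition a crossing of $\widetilde f$ is a ${\mathbb G}_m$-orbit of pairs $\{(x_1,y_1),(x_2,y_2)\}\subset\widetilde x$ with $x_1<x_2$ and $y_1>y_2$; such a pair spans the unique axis-parallel rectangle $r$ with upper-left corner $(x_1,y_1)$ and lower-right corner $(x_2,y_2)$, and conversely the two marked corners of such a rectangle lie on $\widetilde x$ and recover the pair, hence the crossing. (That ``each'' and ``any'' rectangle in the orbit give the same answer in $(1)$ is immediate, since ${\mathbb G}_m$ acts transitively on the orbit and preserves $\widetilde x$.) Inspecting the defining formula for $\widetilde f_{\langle i,j\rangle}$, its graph is obtained from $\widetilde x$ by trading the ${\mathbb G}_m$-orbit of the marked corners $\{(x_1,y_1),(x_2,y_2)\}$ of $r$ for the orbit of the other two corners $\{(x_1,y_2),(x_2,y_1)\}$; so ``resolving the crossing'' is the same operation as ``swapping the marked corners on the orbit of $r$''.

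For $(1)\Leftrightarrow(3)$: resolving destroys the crossing $\langle i,j\rangle$ itself. For any third point $z\in\widetilde x$, checking the position of $z$ with respect to $r$ (the four corner quadrants and four edge strips outside $r$, plus its interior; distinct points of $\widetilde x$ share no coordinate) shows that $z$ forms the same number of crossings with $\{(x_1,y_1),(x_2,y_2)\}$ as with $\{(x_1,y_2),(x_2,y_1)\}$ unless some ${\mathbb G}_m$-translate of $z$ lies in the open rectangle $r$, in which case exactly two crossings are destroyed and none created. Hence $\cross(\widetilde f)-\cross(\widetilde f_{\langle i,j\rangle})=1+2N$, where $N$ counts the ${\mathbb G}_m$-orbits meeting the interior of the orbit of $r$; this equals $1$ exactly when $N=0$, i.e.\ when $r$ is empty, which also reproves the claim stated just above the lemma. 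The step needing the most care is the ${\mathbb G}_m$-bookkeeping in this tally, since a crossing class may pair a point with one of its own translates (recall that $[i]=[j]$ is allowed); it is cleanest to organize the count in a fundamental domain for ${\mathbb G}_m$, or equivalently to work directly in $\HD$.

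For $(1)\Leftrightarrow(2)$: since ${\mathbb G}_m$ consists only of translations and $180^\circ$ rotations, any two distinct ${\mathbb G}_m$-translates of $r$ are congruent axis-parallel rectangles, so they cannot meet in ``plus'' position; thus if two translates overlap, one contains in its interior a corner---hence a point of $\widetilde x$---of the other, and applying a group element puts a point of $\widetilde x$ in the interior of $r$. Consequently $r$ is empty if and only if the family $\{g\cdot r\}_{g\in{\mathbb G}_m}$ consists of pairwise disjoint rectangles none of which meets $\widetilde x$ in its interior. The stabilizer of $r$ in ${\mathbb G}_m\cong D_\infty$ is trivial or ${\mathbb Z}/2{\mathbb Z}$; a nontrivial stabilizer is generated by the $180^\circ$ rotation about the center of $r$, whose fixed point is an orbifold point, i.e.\ $O_1$ or $O_m$. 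When the stabilizer is trivial, $r$ maps to an embedded rectangle in $\HD$; when it is ${\mathbb Z}/2{\mathbb Z}$, the rotation folds $r$ onto itself and $r/({\mathbb Z}/2{\mathbb Z})$ maps to an embedded bigon with the two marked corners of $r$ as its vertices (and the orbifold point in its interior), as in Figure~\ref{fig:Bigon}. Since $\widetilde x$ is ${\mathbb G}_m$-invariant, in either case the image meets $\x$ in its interior precisely when some $g\cdot r$ meets $\widetilde x$ in its interior, i.e.\ precisely when $r$ is not empty; together with the disjointness just noted this gives $(1)\Leftrightarrow(2)$. Throughout, the main obstacle is keeping the two isotropy cases---embedded rectangle versus folded bigon---straight while carrying out the orbit bookkeeping.
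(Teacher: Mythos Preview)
Your overall strategy matches the paper's: the bijection is read off from the discussion preceding the lemma, $(1)\Leftrightarrow(3)$ is the ``empty rectangle'' criterion for resolutions, and $(1)\Leftrightarrow(2)$ is a covering-space statement about the projection $\R^2\to\HD$. Your treatment of the bijection and of $(1)\Leftrightarrow(3)$ is in fact more detailed than the paper's one-line assertions.

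There is, however, a genuine gap in your $(1)\Leftrightarrow(2)$ argument. You write that if two ${\mathbb G}_m$-translates of $r$ overlap, then ``one contains in its interior a corner---hence a point of $\widetilde\x$---of the other.'' The geometric claim that some corner is trapped is fine (congruent axis-parallel rectangles cannot overlap in a plus pattern), but only the upper-left and lower-right corners of $r$ lie on $\widetilde\x$; the upper-right and lower-left corners do not. For the diagonal translations in ${\mathbb G}_m$ (by $(2m-2,2m-2)\cdot\Z$), when $r$ and $r+(t,t)$ overlap it is precisely the \emph{unmarked} lower-left corner of $r+(t,t)$ that lands in $r$, so your inference fails. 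Concretely, take $m=3$, $k=1$, $\widetilde f(1)=9$: the crossing $\langle 1,8\rangle$ gives $r=[1,8]\times[0,9]$, which contains no point of $\widetilde\x$ in its interior, yet $r$ meets $r+(4,4)$. (In this same example the crossing count drops by $3$, not $1$, so the ``$1+2N$'' formula also breaks down in the self-crossing case $[i]=[j]$ you flagged as needing care.)

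The paper sidesteps this by phrasing the embeddedness criterion differently: it asserts that $\partial r$ projects to an embedded curve in $\HD$ precisely when the interior of $r$ contains no point ${\mathbb G}_m$-equivalent to any of the \emph{four} corners of $r$ (see the sentence referencing Figure~\ref{fig:Nonemb}). That criterion involves the unmarked corners as well, and is what actually links overlapping translates to condition~(1). Your disjointness argument would be repaired by working with this stronger condition rather than with marked corners alone.
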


\begin{proof}
  It was already noted that the crossings in ${\widetilde f}$
  correspond to embedded rectangles in ${\mathbb R}^2$, from
  ${\widetilde f}$ to ${\widetilde g}$. The rectangle is empty
  precisely when $\cross({\widetilde g})=\cross({\widetilde f})-1$.
  The isotropy group of the ${\widetilde G}_m$-orbit of any rectangle
  is either trivial or $\Zmod{2}$. The orbits of empty rectangles project
  to either empty rectangles or empty bigons in $\HD$.
  Note that $\partial r$ is embedded precisely when the interior of $r$
  does not contain any points that are equivalent to the corners of $r$;
  see Figure~\ref{fig:Nonemb} for an example where $\partial r$ is not embedded.
\end{proof}

\begin{figure}[ht]
\input{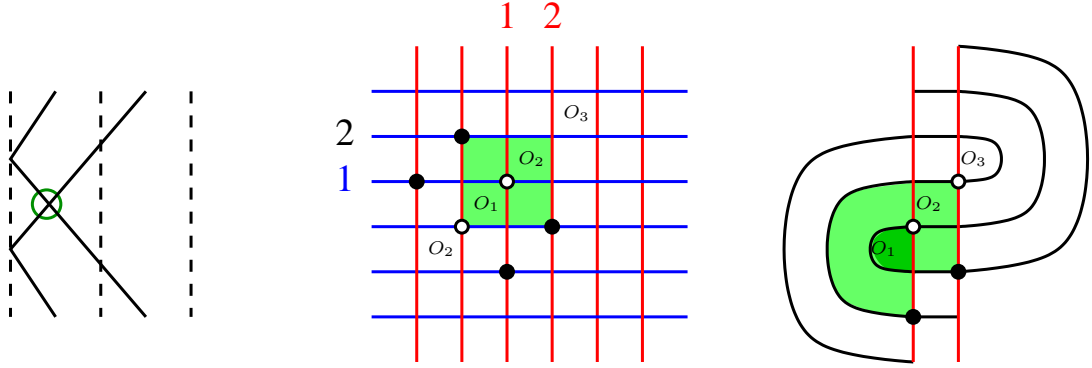}
\caption{\label{fig:Nonemb} {\bf{Projection of a nonempty rectangle.}}
We have a non-empty rectangle on the left
which projects to the domain in $\HD$ pictured on the right.}
\end{figure}

The Heegaard diagram $\HD_{m,k}$ is {\em nice} in the sense of Sarkar;
thus, by~\cite{SarkarWang}, the differentials in $\CFm(\HD_{m,k})$
count empty bigons and rectangles. Thus, we could use this and
Lemma~\ref{lem:IdentifyDifferential} to deduce an identification of
chain complexes $\CFm(\HD_{m,k})\cong\Pong{m}{k}$. Instead, we invest
a little more work in understanding the combinatorics of $\HD$ to
give an alternative argument.

There is a partial ordering on Heegaard states: we write $\x\geq \y$
if there is a $\phi\in\pi_2(\x,\y)$ all of whose local multiplicities
are non-negative; with strict inquality $\x>\y$ if $\phi$ has positive
local multiplicity somewhere.

\begin{lemma}
  \label{lem:PositiveDomains}
  If $({\widetilde f},{\widetilde S})$ and $({\widetilde
    g},{\widetilde T})$ are two lifted partial permutations, and $\x$
  and $\y$ be their corresponding Heegaard states.  
  The following conditions are equivalent:
  \begin{enumerate}[label=(P-\arabic*),ref=(P-\arabic*)]
  \item \label{P:Greater}
    $\x\geq \y$
  \item 
    \label{P:CountLines}
    For each $(i,j)$,
    \[ \#\{a\in \Z\mid a<i, {\widetilde g}(a) <j<{\widetilde f}(a)\}
    \geq \#\{a\in \Z\mid a<i, {\widetilde g}(a) >j>{\widetilde f}(a)\}.\]
  \item 
    \label{P:ResolveCrossings}
    There is a sequence of crossings in ${\widetilde f}$ with
    the property that ${\widetilde g}$ is obtained from ${\widetilde
      f}$ by resolving those crossings.
  \end{enumerate}
  Moreover, if $\x\geq \y$, then 
  \begin{equation}
  \Mas(\x)-\Mas(\y)=\#\cross({\widetilde f})-\#\cross({\widetilde g});
  \label{eq:MaslovDifference}
  \end{equation}
\end{lemma}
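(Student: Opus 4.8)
The plan is to establish the cycle $\ref{P:Greater}\Leftrightarrow\ref{P:CountLines}\Rightarrow\ref{P:ResolveCrossings}\Rightarrow\ref{P:Greater}$ and to extract~\eqref{eq:MaslovDifference} from the last implication. Throughout I assume ${\widetilde S}={\widetilde T}$ and ${\widetilde f}({\widetilde S})={\widetilde g}({\widetilde T})$ — each of the three conditions forces this, and the remaining case is vacuous — so that the unique domain $\phi\in\doms(\x,\y)$ of Lemma~\ref{lem:UniqueHomotopyClass} exists. I pass to the lift in $\R^2$ and recall from that lemma that $\partial\phi=A-B$, where $A$ is a union of vertical arcs on the ${\widetilde\alpha}$'s, the one on ${\widetilde\alpha}_a$ running from $(a,{\widetilde f}(a))$ to $(a,{\widetilde g}(a))$ for each $a\in{\widetilde S}$, and $B$ is a union of horizontal arcs on the ${\widetilde\beta}$'s.

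For $\ref{P:Greater}\Leftrightarrow\ref{P:CountLines}$: arguing exactly as in the proof of Lemma~\ref{lem:IdentifyWeights}, the local multiplicity $n_R(\phi)$ at a region $R$ equals the signed intersection number of a horizontal ray from $R$ to infinity with $\partial\phi$; such a ray misses every horizontal arc, so this number is a signed count of the vertical arcs of $A$ that it crosses, which is exactly the difference of the two cardinalities appearing in~\ref{P:CountLines} for the lattice point corresponding to $R$. Hence $\phi\ge 0$ everywhere — i.e.\ $\x\ge\y$ — precisely when~\ref{P:CountLines} holds for every $(i,j)$. For $\ref{P:ResolveCrossings}\Rightarrow\ref{P:Greater}$: if ${\widetilde g}$ arises from ${\widetilde f}$ by successively resolving crossings, then by Lemma~\ref{lem:IdentifyDifferential} each step is realized in $\HD$ by the non-negative domain which is the image of a single unit rectangle in $\R^2$; their juxtaposition is a non-negative domain from $\x$ to $\y$, which by the uniqueness clause of Lemma~\ref{lem:UniqueHomotopyClass} must be $\phi$, so $\x\ge\y$.

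The substance of the proof is $\ref{P:CountLines}\Rightarrow\ref{P:ResolveCrossings}$, which I would prove by induction on the total multiplicity $\sum_R n_R(\phi)$ of the (now non-negative) domain $\phi$. The crucial step is to produce, when $\x\neq\y$, a crossing $\langle a,a'\rangle$ of ${\widetilde f}$ whose rectangle $r\subset\R^2$ is \emph{empty} — equivalently, by Lemma~\ref{lem:IdentifyDifferential}, $\cross({\widetilde f}_{\langle a,a'\rangle})=\cross({\widetilde f})-1$ — and whose image $[\bar r]$ in $\HD$ satisfies $[\bar r]\le\phi$; then $\phi':=\phi-[\bar r]$ is a non-negative element of $\pi_2(\x_1,\y)$ with strictly smaller total multiplicity ($\x_1$ being the Heegaard state of ${\widetilde f}_{\langle a,a'\rangle}$), so induction expresses ${\widetilde g}$ as a resolution sequence starting from ${\widetilde f}_{\langle a,a'\rangle}$, and prepending $\langle a,a'\rangle$ finishes. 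To locate such a crossing I would run the standard ``outermost corner'' argument: pick an initial corner of $\phi$ that is extremal among initial corners (say highest, then leftmost); at its lift $(a,{\widetilde f}(a))$ the domain $\phi$ occupies a single quadrant, and one grows the minimal empty rectangle out of that quadrant, closing it off at the next point of ${\widetilde\x}$ it meets, with non-negativity of $\phi$ forcing $[\bar r]\le\phi$. I expect the main obstacle to be carrying this construction out in the present ${\mathbb G}_m$-equivariant setting: one must keep track of when the ${\mathbb G}_m$-orbit of $r$ has ${\mathbb Z}/2$ isotropy (so that $[\bar r]$ is a bigon rather than a rectangle) and when $\partial r$ fails to be embedded (as in Figure~\ref{fig:Nonemb}), and check that neither situation obstructs peeling off the piece. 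Finally,~\eqref{eq:MaslovDifference} falls out: the induction writes $\phi$ as a juxtaposition of $n=\cross({\widetilde f})-\cross({\widetilde g})$ empty rectangles and bigons in $\HD$, each of Maslov index $1$ (a short computation with Lipshitz's formula $\mu=e(\phi)+n_\x(\phi)+n_\y(\phi)$), and additivity of the Maslov index under juxtaposition together with $\Mas(\x)-\Mas(\y)=\mu(\phi)$ gives the claim.
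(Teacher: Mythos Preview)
Your proposal is correct and follows essentially the same architecture as the paper's proof: the equivalence $\ref{P:Greater}\Leftrightarrow\ref{P:CountLines}$ via the ray/winding-number interpretation of local multiplicities, the easy implication from $\ref{P:ResolveCrossings}$, an inductive peeling argument for $\ref{P:CountLines}\Rightarrow\ref{P:ResolveCrossings}$, and Equation~\eqref{eq:MaslovDifference} from additivity of the Maslov index over the resulting sequence of index-one bigons/rectangles.

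The one place you diverge is in how you locate the crossing to resolve. The paper works algebraically with the permutations: given $\x>\y$ it finds $i_1$ and then takes $i_2>i_1$ \emph{minimal} with the relevant crossing property, so that minimality forces both emptiness of the rectangle and $\x'\ge\y$. You instead run the geometric Sarkar--Wang ``outermost corner'' argument on the domain $\phi$, growing an empty rectangle out of an extremal initial corner and checking $[\bar r]\le\phi$ directly. These are two standard ways to accomplish the same induction step; the paper's version is terser and stays on the combinatorial side, while yours is more explicit about what happens in $\HD$ and flags the ${\mathbb G}_m$-equivariant subtleties (isotropy, non-embedded $\partial r$) that the paper passes over silently. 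Either route is fine here.
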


\begin{proof}
  As in the proof of Lemma~\ref{lem:IdentifyWeights}, we can interpret
  \[ \#\{a\in \Z\mid a<i, {\widetilde g}(a) <i<{\widetilde f}(a)\}-\#\{a\in
  \Z\mid {\widetilde g}(a) >j>{\widetilde f}(a)\} \]
  as the local
  multiplicity of $\phi$ at the point $(i+\OneHalf,j+\OneHalf)/{\mathbb G}_m$.
  It follows at once that Properties~\ref{P:Greater} and~\ref{P:CountLines} are equivalent.

  Property~\ref{P:ResolveCrossings} clearly implies Property~\ref{P:CountLines}.
  To see that~\ref{P:CountLines}$\Rightarrow$\ref{P:ResolveCrossings}
  we argue as follows.
  When $\x=\y$, the result is obvious. Suppose $\x> \y$, 
  then there is some $i_1<i_2$ so that:
  \[ {\widetilde
    f}(i_1)>j>{\widetilde g}(i_2)\qquad{\text{and}}\qquad
  {\widetilde f}(i_1)<j.\] 
  (In particular, $\langle i_1,i_2\rangle$ is a
  crossing in ${\widetilde f}$.) Choose $i_2>i_1$ minimal with this
  property, and 
  let ${\widetilde x}'$ correspond to ${\widetilde
    f}_{\langle i_1,i_2\rangle}$. Minimality of $i_2$ ensures that 
  $\x>\x'$ and $\x'\geq \y$. 
  It is easy to see that $\x'\geq \y$. 
  Indeed, the sequence is constructed 
  Since $\cross(\x')=\cross(\x)-1$, this process must terminate after at
  most $\cross(\x)$ steps.

  If $\x'$ is obtained from $\x$ by resolving single crossing, and
  $\phi\in\pi_2(\x,\x')$ is the corresponding domain in $\HD$, then
  $\Mas(\phi)=1$. This is true because $\phi$ is a bigon or a
  rectangle (Lemma~\ref{lem:IdentifyDifferential});
  both of these are easily seen to have Maslov index one.
  The Maslov index is additive under jutapositions,
  and the algorithm
  establishing~\ref{P:CountLines}$\Rightarrow$\ref{P:ResolveCrossings}
  gave a sequence $\{{\widetilde f}_i\}_{i=1}^n$ with ${\widetilde
    f}_1={\widetilde f}$, ${\widetilde f}_n={\widetilde g}$, and
  $\Mas(\x_i)-\Mas(\x_{i+1})=1$, $\cross({\widetilde
    f}_{i})-\cross({\widetilde f}_{i+1})=1$.
  Equation~\eqref{eq:MaslovDifference} follows.
\end{proof}

\begin{prop}
  \label{prop:IdentifyComplexes}
  There is an isomorphism of chain complexes
  $\CFm(\HD_{m,k})\cong \Pong{m}{k}$.
\end{prop}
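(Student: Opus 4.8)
The plan is to use the combinatorial dictionary built up in Lemmas~\ref{lem:UniqueHomotopyClass}--\ref{lem:PositiveDomains} to match generators, differentials, and the module structure over $\Field[v_1,\dots,v_m]$ on both sides. First I would define the map on generators: by the lemma following Lemma~\ref{lem:WrapDiagram}, lifted partial permutations $(\widetilde f,\widetilde S)$ on $k$ letters are in bijection with $k$-fold Heegaard states $\x(\widetilde f)$ for $\HD_{m,k}$, so send the generator $[\widetilde f,\widetilde S]$ of $\Pong{m}{k}$ to $\x(\widetilde f)\in\CFm(\HD_{m,k})$ and extend $\Field[v_1,\dots,v_m]$-linearly. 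This is manifestly an isomorphism of free $\Field[v_1,\dots,v_m]$-modules; the content is that it intertwines the differentials.

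The next step is to compute the differential on $\CFm(\HD_{m,k})$. Since $\HD_{m,k}$ is nice in the sense of Sarkar, by~\cite{SarkarWang} the differential counts empty embedded bigons and rectangles, each weighted by $v_1^{n_{O_1}(\phi)}\cdots v_m^{n_{O_m}(\phi)}$. Lemma~\ref{lem:IdentifyDifferential} gives a one-to-one correspondence between these empty bigons/rectangles out of $\x(\widetilde f)$ and the crossing resolutions $\widetilde f_{\langle i,j\rangle}$ with $\cross(\widetilde f_{\langle i,j\rangle})=\cross(\widetilde f)-1$ — which are precisely the terms that survive in $\partial_{\langle i,j\rangle}\widetilde f$ in the pong algebra. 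It remains to check the $v$-power agrees: for the domain $\phi\in\doms(\x(\widetilde f),\x(\widetilde f_{\langle i,j\rangle}))$, Lemma~\ref{lem:IdentifyWeights} identifies $n_{O_i}(\phi)$ with $\weight_i(\widetilde f)-\weight_i(\widetilde f_{\langle i,j\rangle})$, which is exactly the exponent prescribed in the definition of $\partial_{\langle i,j\rangle}$ via the weight-matching condition $\weight[\widetilde f,\widetilde S]=\weight[\widetilde f_{\langle i,j\rangle},\widetilde S]+\weight(v)$. Note that Lemma~\ref{lem:UniqueHomotopyClass} guarantees the connecting domain (with compact support) exists and is unique whenever the source and target states arise from the same $\widetilde S$ and $\widetilde f(\widetilde S)$, so there is no ambiguity in which $\phi$ to use, and no stray contributions from non-crossing resolutions.

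Putting these together: $\partial \x(\widetilde f)=\sum_{\langle i,j\rangle}\#\ModFlow(\phi_{\langle i,j\rangle})\cdot\x(\widetilde f_{\langle i,j\rangle})\cdot v^{\,\weight(\widetilde f)-\weight(\widetilde f_{\langle i,j\rangle})}$, the sum being over crossings whose resolution drops the crossing number by one (empty bigons/rectangles), and over $\Field=\Zmod 2$ each such $\#\ModFlow(\phi_{\langle i,j\rangle})=1$; this matches $\partial[\widetilde f,\widetilde S]$ term-by-term. I expect the main obstacle to be the bookkeeping in the case where a $\mathbb G_m$-orbit of rectangles has nontrivial isotropy, so that the domain in $\HD$ is a bigon rather than an embedded rectangle, together with confirming that $\partial r$ is embedded exactly in the ``empty'' situation (as illustrated in Figure~\ref{fig:Nonemb}); one must be sure the Sarkar--Wang count genuinely sees these bigons and assigns them coefficient $1$, and that the boundary-degeneracy cases near the orbifold points $O_1,O_m$ (handled by Lemma~\ref{lem:NoOuterUs}) do not introduce extra $v_1$ or $v_m$ factors. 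Once these edge cases are checked, the identification of chain complexes follows.
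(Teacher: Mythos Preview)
Your argument is correct and is essentially the Sarkar--Wang route that the paper itself flags as a valid alternative just before the proposition: the diagram is nice, so the differential counts empty bigons and rectangles, and Lemma~\ref{lem:IdentifyDifferential} together with Lemma~\ref{lem:IdentifyWeights} then pins down both the terms and their $v$-coefficients. The paper deliberately chooses a different path: rather than invoking~\cite{SarkarWang} as a black box, it uses the Maslov index formula of Lemma~\ref{lem:PositiveDomains} (Equation~\eqref{eq:MaslovDifference}) to argue directly that positive domains with $\Mas(\phi)=1$ are exactly the ${\mathbb G}_m$-orbits of empty rectangles, and then appeals to Lemmas~\ref{lem:IdentifyDifferential} and~\ref{lem:IdentifyWeights} as you do. Your approach is shorter if one is willing to import Sarkar--Wang; the paper's is self-contained, and the Maslov computation in Lemma~\ref{lem:PositiveDomains} is reused later anyway (for the continuation-map argument in Proposition~\ref{prop:TrianglePong}).

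One small correction: your invocation of Lemma~\ref{lem:NoOuterUs} at the end is misplaced. That lemma concerns the \emph{multiplication} (it says that no extra $v_1$ or $v_m$ appears when composing lifted partial permutations with additive crossing number), and plays no role in identifying the differential. The coefficient match for $\partial$ is handled entirely by Lemma~\ref{lem:IdentifyWeights}; the orbifold points $O_1,O_m$ require no special treatment here.
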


\begin{proof}
  By definition, the differential on $\CFm(\HD_{m,k})$ counts
  $\phi\in\pi_2(\x,\y)$ with $\phi\geq 0$ and $\Mas(\phi)=1$.
  Equation~\eqref{eq:MaslovDifference} identifies these with the
  ${\mathbf G}_m$-orbits of empty rectangles in $\C$ which in turn, by
  Lemma~\ref{lem:IdentifyDifferential} identifies such rectangles with
  crossings in the diagram for $\x$ whose resolution drops the
  crossing number by exactly one. Lemma~\ref{lem:IdentifyWeights}
  identifies the coefficients in $\partial^-$ for $\CFm$ with the coefficients
  in $\partial$ for $\Pong{m}{k}$.
\end{proof}

\newcommand\Lx{\widetilde\x}
\newcommand\Ly{\widetilde\y}
\newcommand\Lz{\widetilde\z}
\newcommand\Las{\widetilde\alphas}
\newcommand\Lbs{\widetilde\betas}
\newcommand\Lcs{\widetilde\gammas}
\newcommand\La{\widetilde\alpha}
\newcommand\Lb{\widetilde\beta}
\newcommand\Lc{\widetilde\gamma}
\section{Triples}
\label{sec:Triples}

We construct the Heegaard triple corresponding to wrapping.  Once
again, this will be drawn as a quotient of $\R^2$ by ${\mathbb G}$,
with the $O_i$ markings along the diagonal line with half-integer
coordinates labeled as before.  Now, we have three sets of lines,
$\Las$, $\Lbs$, and $\Lcs$. As before, $\{\La_i=i\times
\R\}_{i\in\Z}$ and  $\{\Lc_i=\R\times i\}_{i\in\Z}$ (i.e. they are the $\Lbs$ from before). We choose the $\Lbs$ with the following properties:
\begin{enumerate}[label=(${\mathcal H}$-\arabic*),ref=(${\mathcal H}$-\arabic*)]
\item The set $\Lbs$ is ${\mathbb G}_m$-invariant.
\item The slope of each line in $\Lbs$ is $-1$.
\item The ${\mathbb G}_m$-orbits of the $\Lbs$ consist of $m$ lines
  so that $\Lb_i$ and $\Lb_{i+1}$ are separated by $O_i$.
\item 
  \label{Habc:Generic}
  There is no triple-intersection point between $\La_i$, $\Lb_j$, and the diagonal.
\end{enumerate}

\begin{figure}[ht]
\input{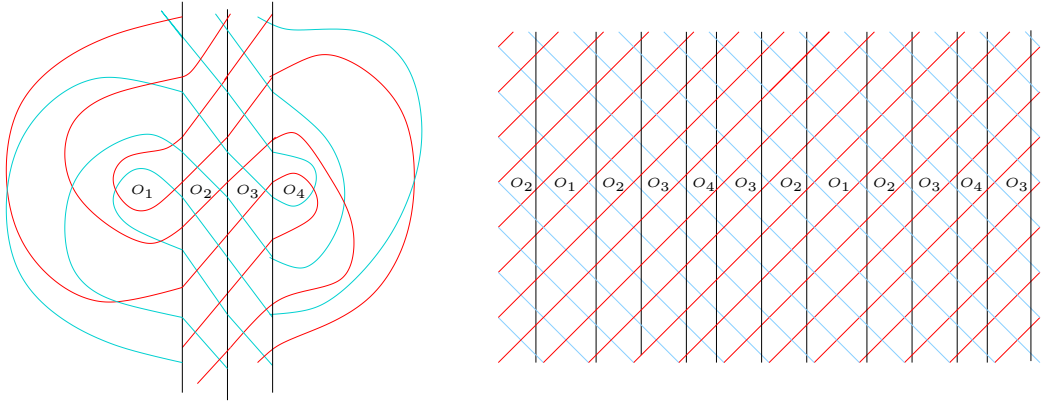}
\caption{\label{fig:HeegaardTriple} {\bf{Heegaard triple.}}
At the left, the (wrapped) Heegaard triple. At the right, the lift
of the diagram on the left to $\R^2$. (Note that the $O$ markings are displayed
here as horizontal; this horizontal line is to be viewed as the diagonal.)}
\end{figure}

Analogous to Definition~\ref{def:doms2}, given Heegaard states $\x$,
$\y$, and $\z$ for $\Hab$, $\Hbc$, and $\Hac$ respectively, we can
consider the set of two-chains $\psi\in \doms(\x,\y,\z)$ so that the components
of $\x$ are initial $\alpha-\beta$ corners, components of $\y$ are
initial $\beta-\gamma$ corners, and components of $\z$ are terminal
$\alpha-\gamma$ corners.

\begin{defn}
  \label{def:TriangularlyConnected}
  Let $\x$, $\y$, and $\z$ be three Heegaard states.  We say that the
  states are {\em triangularly connected} if their ${\mathbb
    G}_m$-equivariant lifts ${\widetilde \x}$, ${\widetilde \y}$, and
  ${\widetilde \z}$ in $\R^2$ admit $k$ triangles in ${\mathbb \R}^2$,
  oriented so that the they have sides in $\alpha$-$\betas$-$\gammas$
  in counterclockwise order, whose ${\mathbb G}_m$ orbits have corners
  exactly at $\Lx$, $\Ly$, and $\Lz$.  Taking the quotients of the
  triangles gives a domain $\psi\in\doms(\x,\y,\z)$.
\end{defn}

Note that there is a weaker notion: one can ask whether three Heegaard
states $\x$, $\y$, and $\z$, can be connected by a {\em Whitney
  triangle}, a continuous map from the triangle $\psi\colon T \to
\Sym^k(\HD)$, which maps three edges of the triangle to the tori
$\Ta=\alpha_1\times\dots\times \alpha_k\subset \Sym^k(\HD)$, $\Tb$,
and $\Tc$, so that the the three vertices are mapped to $\x$, $\y$,
and $\z$. For example, the elementary region containing $O_4$ in
Figure~\ref{fig:HeegaardTriple} is a triangle in $\HD$, and if we
think of its three corners as Heegaard states (with $k=1$), these
three states are connected by a Whitney triangle, which is
double-covered by the elementary hexagon containing $O_4$ on the right
in Figure~\ref{fig:HeegaardTriple}. Thus, those three states are not
triangularly connected in the sense of
Definition~\ref{def:TriangularlyConnected}, though they can be
connected by a Whitney triangle. 
Note that each Whitney triangle gives rise to a domain $\doms(\x,\y,\z)$.

\begin{lemma}
  \label{lem:TriangularConnectedMeans}
  There is a one-to-one correspondence between:
  \begin{itemize}
  \item  triples of lifted
    partial permutations $({\widetilde f},{\widetilde S})$,
    $({\widetilde g},{\widetilde T})$ with ${\widetilde T}={\widetilde
      f}({\widetilde S})$ and $({\widetilde g}\circ {\widetilde
      f},{\widetilde S})$
  \item
    triangularly connected triples of 
    Heegaard states $\x\in\States(\Hab)$, $\y\in\States(\Hbc)$,
    $\z\in\States(\Hac)$.
  \end{itemize}
    Moreover, 
    under this correspondence
    \begin{equation}
      \label{eq:WeightEquation}
    \weight_i({\widetilde f},{\widetilde S}) 
    + \weight_i({\widetilde g},{\widetilde T})
    = 
    \weight_i({\widetilde g}\circ{\widetilde f},{\widetilde S})+
    \# (O_i\cap \psi).
    \end{equation}
\end{lemma}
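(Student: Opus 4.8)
The plan is to work entirely in the universal cover $\R^2$, where all three families $\Las$, $\Lbs$, $\Lcs$ consist of honest straight lines (vertical, slope $-1$, and horizontal respectively), and to reduce the statement to an elementary fact about triangles in the plane whose sides lie on three such pencils of parallel lines. First I would set up the correspondence: a triple as in the first bullet is recorded by the three graphs $\Gamma_{\widetilde f}$, $\Gamma_{\widetilde g}$, $\Gamma_{{\widetilde g}\circ{\widetilde f}}$ in $\Z\times\Z$, lying respectively on $\Las\cap\Lcs$, $\Lcs\cap(\text{slope }-1\text{ lines})$, and $\Las\cap(\text{slope }-1\text{ lines})$, with the bookkeeping that the $\Lbs$ family is coordinatized so that the point $(a,b)\in\Gamma_{\widetilde g}$ sits on the $\beta$-line through the $\gamma$-coordinate $b$ and the $\beta$-line through the $\alpha$-coordinate $\widetilde f^{-1}(\ )$ — more precisely, over a single $\beta$-line the points of $\Gamma_{\widetilde g}$ and $\Gamma_{{\widetilde g}\circ{\widetilde f}}$ that share that line are exactly the pair $((i,{\widetilde f}(i)),(i,{\widetilde g}({\widetilde f}(i))))$ reparametrized. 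Given $({\widetilde f},{\widetilde g})$, for each $i\in{\widetilde S}$ I would produce the triangle with $\alpha$-vertex at $(i,{\widetilde g}({\widetilde f}(i)))$, $\gamma$-vertex at $(i,{\widetilde f}(i))$, and $\beta$-vertex at the intersection of the vertical line $x=i$-shifted-appropriately with the relevant slope-$-1$ line; the condition ${\widetilde T}={\widetilde f}({\widetilde S})$ is exactly what makes the three graphs mutually compatible so that such a $k$-tuple of triangles exists. Conversely, a triangularly connected triple $(\x,\y,\z)$ gives, by reading off the $\alpha$- and $\gamma$-coordinates of the corners, three permutations, and one checks the middle one is the composite.

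The key steps, in order: (1) nail down the coordinate conventions on $\Lbs$ so that "slope $-1$ line through $(i,{\widetilde f}(i))$ and through $(i',{\widetilde g}(i'))$" makes unambiguous sense, using hypotheses (${\mathcal H}$-2), (${\mathcal H}$-3); (2) check that a ${\mathbb G}_m$-orbit of triangles, oriented $\alpha$-$\beta$-$\gamma$ counterclockwise, with corners on $\Lx,\Ly,\Lz$, forces the $\alpha$-coordinate of the $\alpha\gamma$-corner to equal the $\alpha$-coordinate of the $\alpha\beta$-corner and the $\gamma$-coordinate of the $\alpha\gamma$-corner to equal that of the $\beta\gamma$-corner — i.e.\ the corner data literally is the graph of a composite; this is just the geometry of three concurrent pencils of parallel lines bounding a triangle; (3) verify ${\mathbb G}_m$-equivariance is automatic since we take orbits throughout, and that genericity (${\mathcal H}$-4) guarantees the triangles are honest (no degenerate corners on the diagonal); (4) observe injectivity and surjectivity of the correspondence both ways, invoking Lemma~\ref{lem:UniqueHomotopyClass}-style uniqueness (the analogue for triangles: the domain $\psi$ is determined by its corners because $\HD$ is a disk); (5) prove the weight identity~\eqref{eq:WeightEquation}. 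For the last step I would argue exactly as in Lemma~\ref{lem:IdentifyWeights}: $\weight_i$ of a lifted partial permutation counts (half) the signed crossings of the relevant vertical "descent paths" $A$ with the rays $r_i, r_i'$ emanating from $O_i$; the triangle domain $\psi$ has boundary $\partial\psi = A_{\x} + B_{\y} - A_{\z}$ (the $\alpha$-part going $\z\to\x$, the $\gamma$-part giving the $\y$-contribution), so the local multiplicity $\#(O_i\cap\psi)$ — equivalently the winding number of $\partial\psi$ around $O_i$, read off against $r_i$ and $r_i'$ — is precisely $\weight_i({\widetilde g}\circ{\widetilde f}) - \weight_i({\widetilde f}) - \weight_i({\widetilde g})$ up to the overall sign, giving~\eqref{eq:WeightEquation} after rearranging. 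Here I would want the "descent path" description of all three weights relative to a common basepoint state (the identity on ${\widetilde S}$), exactly as in the proof of Lemma~\ref{lem:IdentifyWeights}.

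The main obstacle I expect is step (2) together with the coordinate bookkeeping in step (1): getting the conventions on the middle family $\Lbs$ right so that "the composite permutation" is literally what appears, and checking that the counterclockwise $\alpha$-$\beta$-$\gamma$ orientation convention picks out $({\widetilde g}\circ{\widetilde f},{\widetilde S})$ as the $\alpha\gamma$ (output) state rather than some inverse or reflected variant. Once the plane picture is set up correctly this is combinatorially transparent — each triangle is a "small triangle" interpolating between a $\beta$-line and the two axes, and the three pencils being in "general position with slopes $\infty, -1, 0$" means each valid corner configuration is rigid — but the ${\mathbb G}_m$-equivariance combined with the half-integer labeling of the $O_i$ along the diagonal makes it easy to be off by a reflection, so I would be careful to check small cases (e.g.\ $m=3$, $k=1$, against Figure~\ref{fig:HeegaardTriple}). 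The weight identity step (5), by contrast, I expect to be routine given Lemma~\ref{lem:IdentifyWeights}, since it is the same winding-number computation applied to a triangle rather than a bigon.
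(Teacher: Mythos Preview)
Your treatment of the correspondence (steps (1)--(4)) matches the paper's: the paper also simply observes that once $\widetilde\x,\widetilde\y,\widetilde\z$ are written as $\bigcup \La_s\cap\Lb_{\widetilde f(s)}$, $\bigcup \Lb_t\cap\Lc_{\widetilde g(t)}$, $\bigcup \La_s\cap\Lc_{\widetilde h(s)}$, the triangular-connectedness condition is literally $\widetilde h=\widetilde g\circ\widetilde f$.

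For the weight identity~\eqref{eq:WeightEquation}, however, the paper takes a different route than your step~(5). Rather than adapting the winding-number argument of Lemma~\ref{lem:IdentifyWeights}, the paper introduces for each intersection point $x$ the compact triangle $T_x$ cut out by the two curves through $x$ together with the diagonal, and observes that $\weight_i(\x)=\sum_j O_i(T_{x_j})$ (half the number of $O_i$-markings on $T_{x_j}$). The identity~\eqref{eq:WeightEquation} is then established by a four-case analysis according to how many vertices of each big triangle lie above the diagonal (see Figure~\ref{fig:CountOs}): in each case one tallies which of three subregions $A,B,C$ contributes to which weight and to $\#(O_i\cap\psi)$, and the equation drops out.

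Your winding-number approach is plausible, but it is not as routine as you suggest. Lemma~\ref{lem:IdentifyWeights} chose the rays $r_i,r_i'$ specifically so they avoid $\betas$, and the weight interpretation there was only stated for $\alpha$-$\beta$ states via descent in $\alphas$. Here you have three different state types, and $\partial\psi$ has arcs in all three families; you would need either three different ray/descent choices whose contributions add up correctly, or a single uniform reinterpretation of all three weights. The paper's device of the small triangles $T_x$ sidesteps this by giving a direct, symmetric formula for $\weight_i$ that works for any of the three pairings, at the cost of the case analysis. Your approach, if pushed through, would likely avoid the case split but requires you to first extend Lemma~\ref{lem:IdentifyWeights} to all three Heegaard diagrams $\Hab,\Hbc,\Hac$ with compatible ray choices.
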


\begin{proof}
  Given $\x$, $\y$, and $\z$,
  their corresponding lifted partial permutations ${\widetilde f}$,
  ${\widetilde g}$, and ${\widetilde h}$ are characterized by
  \[ 
    {\widetilde \x}=\bigcup \La_{s}\cap \Lb_{{\widetilde f}(s)},
    \qquad {\widetilde \y}=\bigcup \Lb_{t}\cap \Lb_{{\widetilde
        g}(s)}, \qquad {\widetilde \x}=\bigcup \La_{s}\cap
    \Lb_{{\widetilde h}(s)}.\] Thus, the condition that the components
    of $\Lx$, $\Ly$, and $\Lz$ can be connected by triangles is
    precisely the condition that ${\widetilde h}(s)={\widetilde g}\circ
    {\widetilde f}$.

  To establish Equation~\eqref{eq:WeightEquation}, we argue as follows.

  Consider a coordinate $x\in \La_i\cap\Lb_j$. By
  Condition~\ref{Habc:Generic}, the lines $\La_i$ and $\Lb_j$, and the
  diagonal divide $\R^2$ into seven regions, one of which is a
  compact region -- indeed, it is a triangle triangle $T_x$, Let
  $O_i(T_x)$ be $\OneHalf$ times the number of $O_i$ appears in
  $T_x$. (Note that each occurence of $O_i$ appears on the boundary of
  $T_i$, hence the factor of $\OneHalf$.)
  
  Given a Heegaard state ${\widetilde \x}$, choose any 
  unordered set of $m$ points
  $\{x_1,\dots,x_m\}\subset \R^2$ whose ${\mathbb G}_m$-orbit is $\x$.
  It is elementary to see that 
  \[ \weight_i(\x)=\sum_{j=1}^m  O_i(T_{x_j}).\]

  Equation~\eqref{eq:WeightEquation} is obtained by counting points in
  the plane, divided into cases according to how the diagonal line
  intersects each triangle. Specifically, after applying an element of
  ${\mathbb G}_m$ if necessary, we can assume that the
  $\alpha$-$\gamma$ corner of the triangle is on the upper right.
  There are now four remaining cases, according to the
  number of vertices of the triangle which lie above the diagonal
  line: this can be any number between $0$ and $3$. 
  The cases are illustrated in Figure~\ref{fig:CountOs}.

  For the case on the left on that figure (where the triangle is
  entirely below the diagonal line), the weight of $\x$ counts the $O$
  markings on the diagonal boundary of $B\cup C$ or, equivalently,
  $C$; the weight of $\y$ counts the markings on the diagonal boundary
  of $A$; and the weigth of $\z$ counts $O$ markings on the diagonal
  boundary of $A\cup B$; more succinctly, 
  \[ 
    \weight_i(\x)= O_i (B)+ O_i(C) \qquad
    \weight_i(\y)= O_i(A) \qquad
    \weight_i(\z)= O_i(A)+ O_i(B).
    \]
    Since in this case the number of $O_i$
    in the triangle is given by $ O_i(C)=0$,
    Equation~\eqref{eq:WeightEquation} follows.  The other three cases
    are:
  \[ 
    \weight_i(\x)= O_i (B)+ O_i(C) \qquad
    \weight_i(\y)= O_i(A) \qquad
    \weight_i(\z)= O_i(B).
    \]
  \[ 
    \weight_i(\x)= O_i (C) \qquad
    \weight_i(\y)= O_i(A)+ O_i(B) \qquad
    \weight_i(\z)= O_i(B).
    \]
  \[ 
    \weight_i(\x)= O_i (B) \qquad
    \weight_i(\y)= O_i(A)+ O_i(C) \qquad
    \weight_i(\z)= O_i(B)+ O_i(C);
    \]
    and the number of $O_i$ markings in the triangles are
    \[  O_i(A)+ O_i(C)\qquad  O_i(A)+ O_i(C)\qquad  O_i(A)=0 \]
    respectively. Thus, in the remaining three cases, Equation~\eqref{eq:WeightEquation} holds.
\begin{figure}[ht]
\input{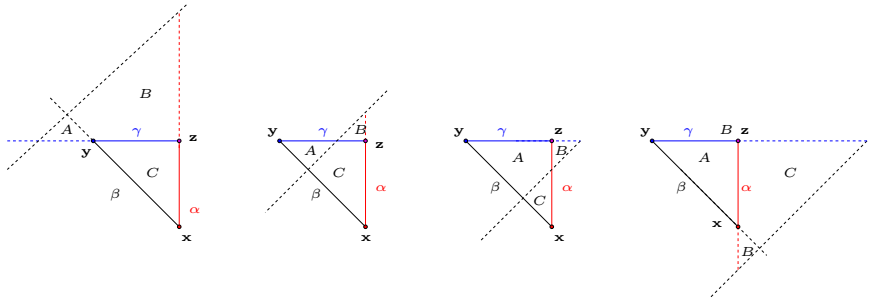}
\caption{\label{fig:CountOs} {\bf{Counting $O$'s in triangles.}}
The diagonal dashed line represents the diagonal.}
\end{figure}
\end{proof}

Consider two-chains for $\HD$ analogous to Definition~\ref{def:doms2},
now using all three sets of curves $\alphas$, $\betas$, and $\gammas$.
These curves divide $\HD$ into {\em elementary domains}, which are
polygons. As in~\cite{HolDisk,RasmussenThesis,LipshitzCyl}, the {\em
  euler measure} each polygon has an {\em Euler measure}: if the
number of sides is $m$, the euler measure of the corresponding polygon
is given by $1-\frac{m}{4}$. Extend the Euler measure $e$ linearly to
all two-chains, and denote the resulting function $e$.

\begin{figure}[ht]
\input{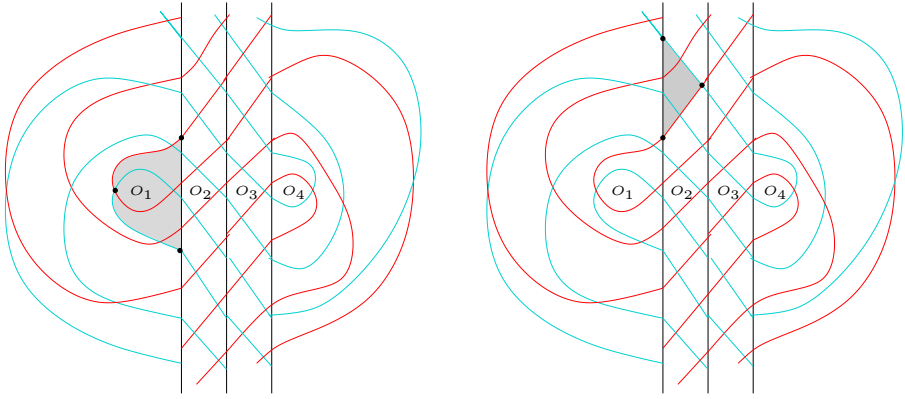}
\caption{\label{fig:Triangles} {\bf{Triangles.}}
At the left: a triangle with Euler measure $3/4$; at the right, a triangle with Euler measure $1/4$.}
\end{figure}

\begin{lemma}
  \label{lem:EulerMeasure}
  Let $\psi\in\pi_2(\x,\y,\z)$ be a positive domain. Then, the euler
  measure of $\psi$ is computed by
  \begin{equation}
    \label{eq:EulerMeasure}
    e(\psi)= \frac{k}{4}+\frac{ O_1(\psi)}{2} + \frac{ O_m(\psi)}{2}.
  \end{equation}
\end{lemma}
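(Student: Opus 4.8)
The plan is to reduce both sides of \eqref{eq:EulerMeasure} to a direct count on the elementary pieces out of which $\psi$ is built. First note that $e$, $O_1(\cdot)$ and $O_m(\cdot)$ are additive under juxtaposition of two-chains, while $k$ depends only on the triple of Heegaard states, not on $\psi$. Moreover, arguing exactly as in Lemma~\ref{lem:UniqueHomotopyClass} --- using contractibility of $\HD$ and that the $\alpha$-, $\beta$- and $\gamma$-curves are properly embedded arcs --- a compactly supported element of $\pi_2(\x,\y,\z)$, if one exists, is unique; so a positive $\psi$ (which for the holomorphic triangle count is compactly supported) is this unique class, and we may compute $e(\psi)$ in whatever way is convenient.

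I would then compute $e(\psi)$ from its domain surface $f\colon S\to\HD$. Away from the two orbifold points $O_1$, $O_m$ the space $\HD$ is an ordinary surface; there $S$ is a branched cover of (the quotient of) the triangle $T$, and for our diagram this cover is unbranched, so that $S$ is homotopy equivalent to a disjoint union of $k$ polygons carrying exactly $3k$ corners --- one over each point of $\x$, $\y$, $\z$ --- all convex, since the corners at $\x$ are initial $\alpha$-$\beta$ corners, those at $\y$ initial $\beta$-$\gamma$ corners, and those at $\z$ terminal $\alpha$-$\gamma$ corners. Granting this, the ``smooth part'' of the Euler measure is $\chi(S)-\tfrac14\cdot 3k = k-\tfrac{3k}{4}=\tfrac k4$. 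It remains to account for $O_1$ and $O_m$: each is an order-two orbifold point of $\HD$ (the image of a rotation-fixed point of $\R^2$), and --- as the remark on $O_4$ in Figure~\ref{fig:HeegaardTriple} illustrates --- the elementary region containing it is a triangle whose preimage in $\R^2$ is a hexagon. Unwinding the definition of the Euler measure (equivalently, a Gauss--Bonnet computation) shows that covering such a cone point with multiplicity $n$ adds $\tfrac n2$ to $e$; since the orbifold points of $\HD$ are exactly $O_1$ and $O_m$, this correction equals $\tfrac12 O_1(\psi)+\tfrac12 O_m(\psi)$, and adding it to $\tfrac k4$ yields \eqref{eq:EulerMeasure}.

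The step that needs real work is the claim that $f$ is unbranched, i.e.\ that the positive domain is a disjoint union of honest polygons away from $O_1,O_m$. One way to see it: lift the whole picture to $\R^2$, the infinite grid decorated with the antidiagonal $\beta$-lines, where a positive triangle domain is manifestly assembled from triangles of that arrangement; alternatively, pass to the smooth double branched cover $A\to\HD$ of Lemma~\ref{lem:WrapDiagram}, where the statement is the standard one for triangle classes in a nice Heegaard triple, and then push back down, tracking the two branch points. The behaviour at $O_1$ and $O_m$ is the only genuinely new ingredient compared with the classical Heegaard-triple computation, and it is purely local --- the bookkeeping being the hexagon-versus-triangle comparison described above.
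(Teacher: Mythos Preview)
Your approach is genuinely different from the paper's, and it contains a real gap.

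The paper's argument is purely linear–algebraic. It introduces the group $\mathcal{A}$ of compactly supported two–chains with no $\alpha$–$\gamma$ corners (generated by ``fundamental bigons'' and ``fundamental rectangles'' between consecutive $\beta$–lines), checks by hand that $\tilde e := e - \tfrac{1}{2}O_1 - \tfrac{1}{2}O_m$ vanishes on these generators, and observes that the set $\mathcal{B}$ of domains with prescribed $\alpha$–$\gamma$ terminal corners at $\widetilde\z$ is an $\mathcal{A}$–torsor. One then evaluates $\tilde e$ on a single convenient element of $\mathcal{B}$ (a union of $k$ small triangles missing $O_1,O_m$, where $\tilde e = k/4$), and the formula follows. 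No surface $S$, no branching analysis, no orbifold Gauss--Bonnet.

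Your argument instead tries to read off $e(\psi)$ from a source surface $S\to\HD$ and then add an orbifold correction at $O_1,O_m$. The fatal problem is the sentence ``for our diagram this cover is unbranched, so that $S$ is homotopy equivalent to a disjoint union of $k$ polygons carrying exactly $3k$ corners.'' That is false for general positive $\psi\in\pi_2(\x,\y,\z)$. In the Lipshitz picture, branch points of $S\to T$ correspond exactly to intersections of $\psi$ with the diagonal $\Delta\subset\Sym^k(\HD)$, and Lemma~\ref{lem:DiagonalCount} together with Lemma~\ref{lem:MasZeroTriangleMult} shows that $\#(\psi\cap\Delta)$ equals the crossing discrepancy $\cross(\widetilde f)+\cross(\widetilde g)-\cross(\widetilde g\circ\widetilde f)$, which is typically nonzero. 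So $S$ is \emph{not} $k$ disjoint triangles, $\chi(S)\neq k$, and your ``smooth part $=k/4$'' computation collapses. Neither of your suggested fixes --- lifting to $\R^2$, or passing to the double cover $A$ --- addresses this: the diagonal intersection is a phenomenon in $\Sym^k$, not an artifact of the orbifold points, and persists in any cover of $\HD$.

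A secondary issue: the identity $e(\psi)=\chi(S)-\tfrac14(\text{corners})$ that you invoke is not invariant under different choices of $S$ realizing the same two--chain, so even before the branching question you would need to specify exactly which $S$ you mean and justify the formula for that choice. The orbifold correction you propose (``covering a cone point with multiplicity $n$ adds $n/2$'') is plausible heuristically but, as written, is asserted rather than proved; the interplay between the combinatorial definition $e(R)=1-m_R/4$ and the cone geometry at $O_1,O_m$ requires more care than a one--line Gauss--Bonnet appeal.

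In short: the paper's affine--space trick sidesteps exactly the branching obstacle that your surface argument runs into. Your route could perhaps be salvaged by incorporating Sarkar's index formula~\eqref{eq:IndexOfNgon} and a separate computation of $\#(\psi\cap\Delta)$ --- but that is a substantially different proof, and essentially circular, since the paper uses the present lemma \emph{to} analyze $\#(\psi\cap\Delta)$.
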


\begin{proof}
  Loosely speaking, we claim that any positive domain can be cut along
  the $\alpha$, $\beta$, and $\gamma$ lines to give $k$ triangles
  (each with Euler measure $1/4$), $O_i(\psi)$+$O_m(\psi)$ bigons
  (each with Euler measure $1/2$), and many rectangles (each with
  Euler measure $0$). We make this claim precise as follows.

  Consider the group of cornerless $2$-chains with compact support,
  generalizing Definition~\ref{def:doms2} in a straightforward way.
  This group is evidently $0$. But there is a non-trivial group ${\mathcal A}$ of
  $2$-chains which are allowed $\beta$-$\gamma$ and $\alpha$-$\gamma$ 
  corners, but no $\alpha$-$\beta$ corners. More precisely, at each intersection
  of $\alpha$ with $\beta$, we require that the alternating sum of
  the local multiplicities of the four quadrants add up to zero; i.e.
  $A+D=B+C$, in the conventions of 
  Figure~\ref{fig:Cornerless}.

  That group is non-trivial: for example, fix a $\beta$- or $\gamma$-segment
  that
  connects a pair of consecutive $\beta$-lines; and fix another
  $\beta$- or $\gamma$-segment
  that connects the same pair of $\beta$-lines. The four
  segments enclose some region in the plane: when all four segments are disjoint
  that region is a quadrilateral, when
  they intersect, it is a difference of two triangles. See
  Figure~\ref{fig:Fundamentals}.  
  When it
  happens that these four segments are permuted by some ${\mathbb
    G}_m$ action, We call these regions {\em fundamental bigons};
  otherwise, we call them {\em fundamental rectangles}. 
  Fundamental bigons project to actual bigons in $\HD$;
  while fundamental rectangles project to rectangles (or differences of two 
  triangles).

\begin{figure}[ht]
\input{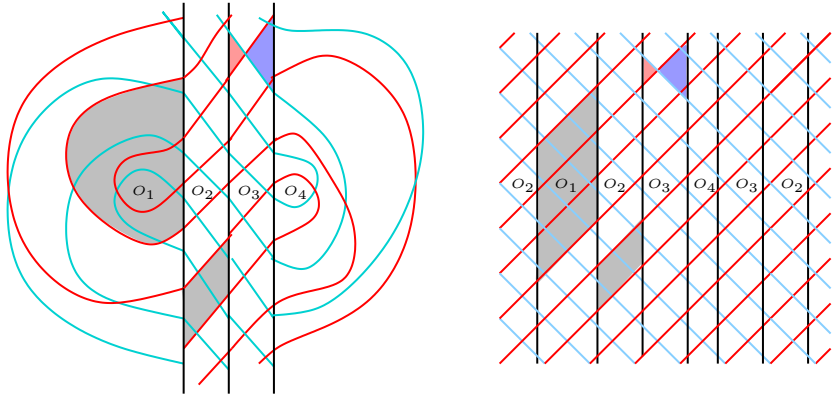}
\caption{\label{fig:Fundamentals} {\bf{Fundamental regions.}}  At the
  left, we have
  a fundamental bigon, a fundamental rectangle, and a difference of
  two triangles. At the right, we have their lifts.)}
\end{figure}

It is elementary to verify that ${\mathcal A}$ is generated by the
fundamental bigons and rectangles. Moreover, the quantity $e(D)-
\frac{O_1(D)}{2}-\frac{O_m(D)}{2}$ vanishes on all fundamental bigons
and rectangles.

Let ${\mathcal B}$ the space of domains with
$\alpha$-$\gamma$-terminal corner (in the sense of
Definition~\ref{def:doms2}) at some component of  $\Lz$. This is clearly an affine
space of ${\mathcal A}$. Moreover, given any $\Lz$, we can draw some
positive union of triangles $T_0(\z)\in B$. Clearly, the Euler measure
is $k/4$.  Now, the map ${\widetilde e}=e-\frac{O_1}{2}-\frac{O_m}{2}\colon \doms(\x,\y,\z)\to \Z/4$ factors
through the space of ${\mathcal A}$-orbits in ${\mathcal B}$,
${\mathcal B}/{\mathcal A}$, since ${\widetilde e}$ vanishes on 
${\mathcal A}$. Since $\psi$ and $T_0(\z)$ have the the same
image in ${\mathcal B}/{\mathcal A}$, and $e(T_0(\x))=k/4$, it follows
that $e(\psi)=k/4$.
\end{proof}

The relevance of Lemma~\ref{lem:EulerMeasure} stems from Sarkar's
computation of the Maslov index of a triangle (or more generally, a
Whitney $n$-gon)~\cite{SarkarMaslov}.  One of his formulas, generalizing a
theorem of Rasmussen when $n=2$~\cite{RasmussenThesis}, gives

\begin{equation}
  \label{eq:IndexOfNgon}
  \Mas(\psi)=2e(\psi)-\frac{k(n-2)}{2}+\#(\psi\cap \Delta).
\end{equation}

Here, $\#(\psi\cap\Delta)$ denotes the algebraic intersection number of of
$\psi$ with the big diagonal in $\Sym^k(\HD)$.

\begin{lemma}
  \label{lem:MasZeroTriangConn}
  If $\psi\in\doms(\x,\y,\z)$ has $\Mas(\psi)=0$, and $\psi$ has a
  pseudo-holomorphic representative, then $\x$, $\y$, and $\z$ are
  triangularly connected, in the sense of
  Definition~\ref{def:TriangularlyConnected}.
\end{lemma}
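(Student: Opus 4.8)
The plan is to extract geometric constraints from the Maslov index formula~\eqref{eq:IndexOfNgon} for a triangle ($n=3$), combine them with the Euler measure computation of Lemma~\ref{lem:EulerMeasure}, and conclude that a holomorphic representative must in fact be a disjoint union of $k$ small triangles whose corners realize the lifted-permutation data. Setting $n=3$ in~\eqref{eq:IndexOfNgon} and substituting~\eqref{eq:EulerMeasure} gives
\[
  0 = \Mas(\psi) = 2e(\psi) - \tfrac{k}{2} + \#(\psi\cap\Delta)
    = \tfrac{k}{2} + O_1(\psi) + O_m(\psi) - \tfrac{k}{2} + \#(\psi\cap\Delta),
\]
so that $O_1(\psi)+O_m(\psi) + \#(\psi\cap\Delta) = 0$. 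Since $\psi$ has a pseudo-holomorphic representative it is a positive domain, hence each of these three non-negative quantities vanishes: $O_1(\psi)=O_m(\psi)=0$ and $\#(\psi\cap\Delta)=0$. The first part of Lemma~\ref{lem:TriangularConnectedMeans} (the weight identity~\eqref{eq:WeightEquation}) shows that triangular connectedness is detected precisely by the lifted permutations composing correctly, so the task reduces to showing that a positive domain $\psi$ with these vanishing multiplicities lifts to a genuine union of $k$ triangles in $\R^2$ with the prescribed corners.

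First I would use $\#(\psi\cap\Delta)=0$ together with positivity to pass to the universal cover $\R^2$: a holomorphic triangle with zero diagonal multiplicity lifts (after taking the $k$-fold branched cover of $\Sym^k(\HD)$ along $\Delta$ and unwrapping the ${\mathbb G}_m$ action) to $k$ disjoint holomorphic triangles in $\R^2$, each a map from a disk with three boundary arcs mapped to $\widetilde\alphas$, $\widetilde\betas$, $\widetilde\gammas$ respectively, in the correct cyclic order; this is the standard ``diagonal-free $\Rightarrow$ Cartesian-product'' argument as in the proof of Lemma~\ref{lem:UniqueHomotopyClass} and the nearby discussion. Next, in $\R^2$ the $\widetilde\alpha$-lines are vertical, the $\widetilde\gamma$-lines horizontal, and the $\widetilde\beta$-lines have slope $-1$; a positive domain bounded by one line from each family with a single corner of each type is, by elementary planar geometry, an honest triangular region (the three lines bound a compact region exactly when the corners are arranged as in Definition~\ref{def:TriangularlyConnected}, and positivity forces $\psi$ to be that region with multiplicity one rather than some larger immersed chain). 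Finally I would invoke $O_1(\psi)=O_m(\psi)=0$ to rule out the degenerate possibility that a lifted triangle straddles one of the orbifold points $O_1$ or $O_m$ on the diagonal — exactly the phenomenon illustrated by the ``Whitney triangle but not triangularly connected'' example around $O_4$ in Figure~\ref{fig:HeegaardTriple}; such a configuration would be double-covered by a hexagon and would necessarily contribute positive multiplicity at $O_1$ or $O_m$, contradicting the vanishing.

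Assembling these steps: the lifted representative consists of $k$ embedded triangles in $\R^2$ with sides on $\widetilde\alphas$, $\widetilde\betas$, $\widetilde\gammas$ in counterclockwise order, with corners exactly at $\widetilde\x$, $\widetilde\y$, $\widetilde\z$, whose ${\mathbb G}_m$-orbits descend to the domain $\psi$; this is precisely the definition of $\x,\y,\z$ being triangularly connected. The main obstacle I anticipate is the second step — rigorously upgrading ``positive domain with prescribed single corners and vanishing $\Delta$-, $O_1$-, $O_m$-multiplicities'' to ``disjoint union of genuine triangles'' — since one must rule out non-embedded or multiply-covered planar configurations (cf. Figure~\ref{fig:Nonemb}) and confirm that the holomorphicity of the representative, not merely positivity of the domain, is what excludes them; here one leans on the fact that each cut-open piece is a holomorphic disk with boundary on three lines in general position, so its image is convex and the map is an embedding onto the bounded complementary region.
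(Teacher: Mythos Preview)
Your approach is essentially the paper's: compute $\Mas(\psi)=O_1(\psi)+O_m(\psi)+\#(\psi\cap\Delta)$ from Lemma~\ref{lem:EulerMeasure} and Equation~\eqref{eq:IndexOfNgon}, use non-negativity to force all three terms to vanish, then use $\#(\psi\cap\Delta)=0$ to decompose $\psi$ into $k$ factor triangles $\psi_i\colon T\to\HD$, and use $O_1(\psi)=O_m(\psi)=0$ to lift each $\psi_i$ to $\R^2$.

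Two organizational points. First, the role of $O_1(\psi)=O_m(\psi)=0$ is not to rule out a posteriori some bad ``straddling'' configuration of an already-lifted triangle; it is what allows the lift from $\HD$ to $\R^2$ in the first place, since $\R^2\to\HD=\R^2/{\mathbb G}_m$ is a genuine covering exactly over $\HD\setminus\{O_1,O_m\}$. So the correct order is: $\Delta$-avoidance gives decomposability into factors $\psi_i\colon T\to\HD$ (this is the $k!$-fold cover $\HD^k\to\Sym^k(\HD)$, not $k$-fold), and then $O_1,O_m$-avoidance lets each $\psi_i$ lift by covering-space theory. Second, your ``main obstacle'' paragraph about embeddedness and convexity is unnecessary: Definition~\ref{def:TriangularlyConnected} only asks for $k$ triangles in $\R^2$ with the correct corners and boundary lines, and once you have continuous lifts $\widetilde\psi_i\colon T\to\R^2$ with boundary on one $\widetilde\alpha$-, one $\widetilde\beta$-, and one $\widetilde\gamma$-line, the corners are forced and the (unique) planar triangle they bound is the witness; no further holomorphicity or convexity argument is needed.
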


\begin{proof}
We begin with some remarks. Let $\x=\{x_1,\dots,x_k\}$,
$\y=\{y_1,\dots,y_k\}$, $\z=\{z_1,\dots,z_k\}$.  Suppose there are $k$
triangles $\psi_i\colon T\to \HD$ with corners at $x_i$, $y_i$, and
$z_i$, and edges mapping to some $\alpha$-curve, some $\beta$-curve,
and some $\gamma$-curve.  We can then form
$\psi=\psi_1\times\dots\times \psi_k\colon T \to \Sym^k(\HD)$, to get
a Whitney triangle. We call such
Whitney triangles {\em decomposable}. Not every Whitney triangle is
decomposable, but Whitney triangles which are disjoint from the
diagonal $\Delta$ are.

  Now, combining Equation~\eqref{eq:EulerMeasure} with
  Equation~\eqref{eq:IndexOfNgon}, we see that for any triangle,
  \[ \Mas(\psi)=O_1(\psi) + O_m(\psi) + \#(\psi\cap \Delta).\]
  If $\psi$ has a pseudo-holomorphic representative, all three of the
  terms on the right are non-negative.  Indeed, if $\Mas(\psi)=0$,
  then $\#(\psi\cap \Delta)=0$; and indeed, $\psi$ is disjoint from
  the diagonal; and hence it is decomposable.  Next, observe that
  since $O_1(\psi)=O_m(\psi)=0$, each factor $\psi_i\colon T\to \HD$
  in the factor of decomposition of $\psi$ maps to
  $\HD\setminus\{O_1,O_m\}$; i.e. the locus where the quotient map
  $\R^2\to \HD=\R^2/{\mathbb G}_m$ is a covering space.  Thus, we can
  lift each factor $\psi_i$ to obtain maps $\{{\widetilde \psi}_i\colon
  T \to \R^2\}_{i=1}^k$, showing that $\x$, $\y$ and $\z$ are triangularly connected.
\end{proof}

\begin{lemma}
  \label{lem:DiagonalCount}
  Suppose that $\x$, $\y$, and $\z$ are triangularly connected Heegaard
  states, let $\psi\in\doms(\x,\y,\z)$ be the domain in $\HD$ 
  connecting them, and
  choose triangles $\{\widetilde \psi_i\colon T \to \R^2\}_{i=1}^k$ 
  whose projection to $\HD$ gives $\psi\in\doms(\x,\y,\z)$.
  Then, 
  \begin{equation}
    \label{eq:DiagonalCount}
    \#(\psi\cap \Delta) =  
    \OneHalf \sum_{i,j}^k \sum_{g\in {\mathbb G}_m} \#\left({\widetilde \psi}_i
    \times g\cdot {\widetilde \psi}_j\right)\cap \Delta,
  \end{equation}
  where the right-hand-side is to be interpreted as an intersection 
  with $\Delta\subset \Sym^2(\R^2)$.
\end{lemma}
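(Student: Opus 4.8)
The plan is to transfer the computation upstairs along the branched covering $\pi\colon\R^2\to\HD=\R^2/{\mathbb G}_m$, using that $\#(\psi\cap\Delta)$ depends only on the homology class of $\psi$. Since $\x,\y,\z$ are triangularly connected, $\psi$ is represented by the \emph{decomposable} Whitney triangle $\Sym^k(\pi)\circ\hat\psi$, where $\hat\psi\colon T\to\Sym^k(\R^2)$ carries $t$ to the configuration $\{\widetilde\psi_1(t),\dots,\widetilde\psi_k(t)\}$. Decomposing $\Delta\subset\Sym^k(\HD)$ into its pairwise diagonal strata (which are disjoint near a generic point of $\Delta$, and which a decomposable map meets one at a time) gives $\#(\psi\cap\Delta)=\sum_{i<j}\#\bigl((\pi\circ\widetilde\psi_i)\times(\pi\circ\widetilde\psi_j)\cap\Delta_{\Sym^2(\HD)}\bigr)$, so the whole question reduces to understanding, for a pair of strands, how the diagonal of a symmetric square pulls back under $\Sym^2(\pi)$.

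For a pair $(i,j)$, write $(\pi\circ\widetilde\psi_i)\times(\pi\circ\widetilde\psi_j)=\Sym^2(\pi)\circ(\widetilde\psi_i\times\widetilde\psi_j)$. Set-theoretically $\Sym^2(\pi)^{-1}(\Delta_{\Sym^2(\HD)})=\bigcup_{g\in{\mathbb G}_m}E_g$ with $E_g=\{\{a,g\cdot a\}\mid a\in\R^2\}$; note $E_g=E_{g^{-1}}$, that $E_e=\Delta_{\Sym^2(\R^2)}$, and that the $E_g$ for $[g]\neq[e]$ are distinct. The key local input is the ramification of $\Sym^2(\pi)$: away from the fixed points $O_1,O_m$ of ${\mathbb G}_m$ it is unramified along $E_e$, but along each $E_g$ with $g\neq e$ it is doubly ramified, because two distinct points in a common ${\mathbb G}_m$-orbit have equal image in $\HD$, so in suitable local coordinates $\Sym^2(\pi)$ reads $(z,w)\mapsto(z+w,zw)$ and $\{\sigma_1^2=4\sigma_2\}$ pulls back to $\{(z-w)^2=0\}$. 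Hence $\Sym^2(\pi)^*[\Delta_{\Sym^2(\HD)}]=[\Delta_{\Sym^2(\R^2)}]+2\sum_{[g]\neq[e]}[E_g]$, and pairing with $\widetilde\psi_i\times\widetilde\psi_j$ gives $\#(\widetilde\psi_i\times\widetilde\psi_j\cap\Delta_{\Sym^2(\R^2)})$ — which itself carries the usual discriminant doubling of a diagonal in a symmetric square — plus, for each $g\neq e$, twice the number of $t$ with $\{\widetilde\psi_i(t),\widetilde\psi_j(t)\}=\{a,g\cdot a\}$.

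It then remains to repackage this double sum as the symmetric expression in \eqref{eq:DiagonalCount}. Since $\#(\widetilde\psi_i\times g\,\widetilde\psi_j\cap\Delta_{\Sym^2(\R^2)})$ counts, with discriminant doubling, the $t$ with $\widetilde\psi_i(t)=g\cdot\widetilde\psi_j(t)$, each unordered coincidence $\{\widetilde\psi_i(t),\widetilde\psi_j(t)\}=\{a,g\cdot a\}$ appears in $\OneHalf\sum_{i,j}\sum_g$ once as $(i,j,g)$ and once as $(j,i,g^{-1})$, each with a factor of two; the leading $\OneHalf$, the doubling over ordered pairs, and the discriminant factor together reproduce exactly the contribution of $[\Delta_{\Sym^2(\R^2)}]+2\sum_{[g]\neq[e]}[E_g]$ found above. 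The $i=j$ terms of \eqref{eq:DiagonalCount} contribute nothing: for $g=e$ the map $\widetilde\psi_i\times\widetilde\psi_i$ lies in $\Delta_{\Sym^2(\R^2)}$ as an embedded relative disk with trivial normal data, and for $g\neq e$ a coincidence $\widetilde\psi_i(t)=g\cdot\widetilde\psi_i(t)$ would force $\widetilde\psi_i$ through a ${\mathbb G}_m$-fixed point, excluded once $O_1(\psi)=O_m(\psi)=0$ (which holds whenever the crossing numbers add, by Lemma~\ref{lem:NoOuterUs} together with \eqref{eq:WeightEquation}, and which is the regime relevant to the intended application).

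The step I expect to be the main obstacle is precisely this bookkeeping: keeping straight the two independent origins of the factor of two — the double ramification of $\Sym^2(\pi)$ along $E_g$ for $g\neq e$, versus the discriminant nature of the diagonal of a symmetric square for the $g=e$ stratum — and verifying that nothing near $O_1,O_m$ or at the degenerate ordered pairs spoils the identity. The most transparent way to carry it out is probably to work ${\mathbb G}_m$-equivariantly on the Cartesian product $(\R^2)^k$ with its $({\mathfrak S}_k\ltimes{\mathbb G}_m^k)$-action, where $(\Sym^k\pi)^{-1}(\Delta)$ is literally the union of the hyperplanes $\{x_i=g\cdot x_j\}$ and all multiplicities are visible as coincidences among them, so that \eqref{eq:DiagonalCount} becomes a direct count.
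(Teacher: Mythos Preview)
Your proposal is correct and rests on the same idea as the paper's proof: coincidences of $\psi$ with $\Delta$ in $\Sym^k(\HD)$ are exactly the points where two factors $\psi_i,\psi_j$ agree, which lift to ${\mathbb G}_m$-orbits of coincidences $\widetilde\psi_i(t)=g\cdot\widetilde\psi_j(t)$ in $\R^2$, and the $\tfrac12$ converts unordered data to the ordered sum. The paper dispatches this in two sentences by directly identifying the set of intersection points with such orbits and then ``breaking symmetry.''

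Your route through the pullback $\Sym^2(\pi)^*[\Delta]=[\Delta]+2\sum_{[g]\neq[e]}[E_g]$ is valid but more elaborate than necessary: the discriminant factor of $2$ appears uniformly on both sides of the identity (each simple coincidence contributes $2$ to an intersection with a diagonal in a symmetric square, whether in $\Sym^2(\HD)$ or in $\Sym^2(\R^2)$), so it cancels and one may simply count coincidences set-theoretically, as the paper does. Where your argument is genuinely more careful is in the treatment of the $i=j$ terms: you correctly observe that $\widetilde\psi_i(t)=g\cdot\widetilde\psi_i(t)$ for $g\neq e$ forces $\widetilde\psi_i$ through a ${\mathbb G}_m$-fixed point, hence through $O_1$ or $O_m$, and that this is excluded only under $O_1(\psi)=O_m(\psi)=0$. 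The paper's statement omits this hypothesis, but it is present in the only application (Lemma~\ref{lem:MasZeroTriangleMult}), so your caveat is well placed.
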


\begin{proof}
  The intersection of $\psi=\bigtimes_{i=1}^k \psi_i$ with $\Delta$ in
  $\Sym^k(\HD)$ is identified with ${\mathbb G}_m$-orbits of
  data $g_1,
  g_2\in{\mathbb G}_m$, $\tau_1,\tau_2\in {\mathbb R}^2$ with
  $g_1\cdot {\widetilde \psi}_i(\tau_1)=g_2\cdot {\widetilde
    \psi}_j(\tau_2)$, where the pair of triples
  $(g_1,\tau_1,{\widetilde \psi}_i)$ and $(g_2,\tau_2,{\widetilde
    \psi}_j)$ is thought of as unordered.  Breaking symmetry, we can
  think of this as half the count of $\tau_1,\tau_2\in \R^2$ and $g\in
  {\mathbb G}_m$ and pairs $i$, $j$, so that
  ${\widetilde\psi}_i(\tau_1)=g\cdot {\widetilde\psi}_j(\tau_2)$. This
  is the count on the right-hand-side of
  Equation~\eqref{eq:DiagonalCount}.
\end{proof}

\begin{lemma}
  \label{lem:MasZeroTriangleMult}
  Suppose that $\x$, $\y$, and $\z$ are three triangularly connected
  Heegaard states, corresponding to lifted partial permutations Let
  $({\widetilde f},{\widetilde S})$, $({\widetilde f},{\widetilde
    T})$, and $({\widetilde g}\circ {\widetilde f},{\widetilde S})$.
  Let $\psi\in \doms(\x,\y,\z)$ be the corresponding domain
  in $\Habc$.  Suppose moreover that
  $O_1(\psi)=O_m(\psi)=0$.  Then,
    \begin{equation}
      \label{eq:TriangleMaslov}
      \cross({\widetilde f},{\widetilde S})+
    \cross({\widetilde g},{\widetilde T})
    -\cross({\widetilde g}\circ {\widetilde f},{\widetilde S}) 
    = \Mas(\psi).
    \end{equation}
\end{lemma}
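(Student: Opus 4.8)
The plan is to compute $\Mas(\psi)$ directly using Sarkar's index formula together with the Euler measure computation of Lemma~\ref{lem:EulerMeasure}, and then to identify the diagonal intersection number $\#(\psi\cap\Delta)$ with the ``lost crossings'' $\cross(\widetilde f)+\cross(\widetilde g)-\cross(\widetilde g\circ\widetilde f)$. Since $O_1(\psi)=O_m(\psi)=0$, combining Equations~\eqref{eq:EulerMeasure} and~\eqref{eq:IndexOfNgon} with $n=3$ gives immediately
\[
\Mas(\psi)=2\Bigl(\tfrac{k}{4}\Bigr)-\tfrac{k}{2}+\#(\psi\cap\Delta)=\#(\psi\cap\Delta),
\]
so the entire content of the lemma is the identity $\#(\psi\cap\Delta)=\cross(\widetilde f)+\cross(\widetilde g)-\cross(\widetilde g\circ\widetilde f)$.

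To evaluate $\#(\psi\cap\Delta)$, I would invoke Lemma~\ref{lem:DiagonalCount}, which rewrites the diagonal count as
$\OneHalf\sum_{i,j}\sum_{g\in{\mathbb G}_m}\#\bigl(\widetilde\psi_i\times g\cdot\widetilde\psi_j\bigr)\cap\Delta$,
where $\{\widetilde\psi_i\}$ are the $k$ lifted triangles in $\R^2$ whose ${\mathbb G}_m$-orbits have corners on $\Lx$, $\Ly$, $\Lz$. Each $\widetilde\psi_i$ is an honest (small) triangle in $\R^2$ with one $\alpha$-edge, one $\gamma$-edge (the old $\beta$), and one $\beta$-edge (the new slope $-1$ line), whose projections are the triangles described in Lemma~\ref{lem:TriangularConnectedMeans}. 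So I need to understand when two such lifted triangles $\widetilde\psi_i$ and $g\cdot\widetilde\psi_j$, viewed as currents in $\R^2$, have nonzero (signed) intersection number inside $\Sym^2(\R^2)$, i.e. when their interiors overlap (counted with the right orientation). The key combinatorial observation is that $\widetilde\psi_i$ is, roughly, the small triangular region cut out near the intersection point $\widetilde x_i=\widetilde\alpha_{s_i}\cap\widetilde\gamma_{\widetilde f(s_i)}$ by the three lines through it; the interior of $\widetilde\psi_i$ records the pairs of points involved in the ``transposition'' that sends the identity-like configuration towards $\widetilde f$.

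The main step, then, is to match up overlaps of $\widetilde\psi_i$ with $g\cdot\widetilde\psi_j$ against the crossing-count discrepancy. The inequalities recalled in Section~\ref{sec:LiftPerm}, namely $\cross(\widetilde g\circ\widetilde f)\le\cross(\widetilde g)+\cross(\widetilde f)$, come from the elementary fact that a crossing of $\widetilde g\circ\widetilde f$ is a crossing of $\widetilde f$ or a crossing of $\widetilde g$ (pulled back along $\widetilde f$), but not necessarily both; the deficit $\cross(\widetilde f)+\cross(\widetilde g)-\cross(\widetilde g\circ\widetilde f)$ counts (${\mathbb G}_m$-orbits of) pairs of strands that cross in $\widetilde f$, cross again in $\widetilde g$ (i.e.\ $\langle \widetilde f(i),\widetilde f(j)\rangle$ is a crossing of $\widetilde g$), and so ``uncross'' in the composite. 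I would show that precisely these doubly-crossing pairs are the ones for which the relevant pair of lifted triangles (after applying the unique $g$ that aligns them appropriately) have overlapping interiors contributing $+1$ each to the sum in Lemma~\ref{lem:DiagonalCount}; the factor $\OneHalf$ is absorbed because each orbit of doubly-crossing pairs is counted twice (once as $(i,j,g)$ and once as $(j,i,g^{-1})$ up to the ${\mathbb G}_m$-quotient), exactly matching the $\OneHalf$ in Equation~\eqref{eq:DiagonalCount}. Concretely, I expect to reduce to a local picture: given strands $i<j$ with $\widetilde f(i)>\widetilde f(j)$ and $\widetilde g(\widetilde f(i))>\widetilde g(\widetilde f(j))$, the triangles $\widetilde\psi_i$ and $\widetilde\psi_j$ (or an appropriate ${\mathbb G}_m$-translate) necessarily overlap because the $\beta$-edge of one must pass ``through'' the other, and conversely if either crossing is absent the triangles can be taken disjoint.

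**Main obstacle.** The hard part will be the bookkeeping of orientations and of the ${\mathbb G}_m$-action in the diagonal-intersection sum: I must verify that every geometric overlap of $\widetilde\psi_i$ with $g\cdot\widetilde\psi_j$ contributes $+1$ (never $-1$ and never higher multiplicity — this should follow from positivity of $\psi$ and the fact that the triangles are small and convex), that the correspondence with doubly-crossing strand-pairs is exactly one-to-one, and that the group-orbit counting produces exactly the factor of $\OneHalf$ rather than some other rational. Checking these requires drawing the four configurations of a triangle relative to the diagonal (as in Figure~\ref{fig:CountOs}) and tracking which of $\widetilde\psi_i$'s three edges can cut $\widetilde\psi_j$; the hypothesis $O_1(\psi)=O_m(\psi)=0$ is what keeps all the relevant geometry in the unbranched locus $\HD\setminus\{O_1,O_m\}$, so that the lifts $\widetilde\psi_i$ are well-defined honest triangles and the local analysis in $\R^2$ is legitimate.
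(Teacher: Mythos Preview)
Your reduction is exactly right and matches the paper: combine Sarkar's formula with the Euler measure Lemma~\ref{lem:EulerMeasure} (using $O_1(\psi)=O_m(\psi)=0$) to get $\Mas(\psi)=\#(\psi\cap\Delta)$, then feed this through Lemma~\ref{lem:DiagonalCount}. The heart of the argument, for both you and the paper, is the identity $\cross(\widetilde f)+\cross(\widetilde g)-\cross(\widetilde g\circ\widetilde f)=\#(\psi\cap\Delta)$.

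Where your plan diverges from the paper is in how this last identity is established. The paper does \emph{not} set up a bijection between ``doubly--crossing'' strand pairs and overlapping triangle pairs. Instead it extends each lifted triangle $T$ to a partition of $\R^2$ into seven regions, labelled by the linear combinations $X+Y$, $X-Z$, $Y-Z$ (Figure~\ref{fig:MaslovComputation}); the crossing deficit is then rewritten as a signed count, over every other triangle $T'$, of which region each vertex of $T'$ lands in. Separately, the paper observes that $T'$ is the image of $T$ under a single complex-affine map $L$, and that the map $t\mapsto\{t,L(t)\}$ into $\Sym^2(\R^2)$ meets the diagonal exactly when $L$ has a fixed point inside $T$, with intersection number $2$ there (the discriminant $(t-L(t))^2$ vanishes to second order). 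A four-case check (Figure~\ref{fig:MaslovComputation5}), organized by how many vertices of $T'$ lie inside $T$, then shows the two counts agree pair-by-pair.

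Two features of this argument are missing from your sketch and would bite you. First, the local diagonal intersection number is $2$, not $1$: the image of $t\mapsto\{t,L(t)\}$ is tangent to $\Delta\subset\Sym^2$, so your ``contributing $+1$ each'' is off. (This is compensated by the fact that each doubly-crossing orbit contributes $2$ to the deficit, since it is a crossing of $\widetilde f$ \emph{and} of $\widetilde g$ but not of the composite; you wrote ``counts'' as if the contribution were $1$.) Second, ``overlapping interiors'' is the wrong geometric criterion: when $T'$ is a pure translate of $T$ the two triangles may well overlap, yet $L$ has no fixed point and the diagonal contribution is zero---and the paper's case analysis shows the crossing-deficit contribution is also zero in exactly that sub-case. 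Your concrete inequalities also slipped: ``$\widetilde g(\widetilde f(i))>\widetilde g(\widetilde f(j))$'' with $i<j$ and $\widetilde f(i)>\widetilde f(j)$ is a pair that \emph{remains} crossed in the composite and contributes $0$ to the deficit; the doubly-crossing condition is $\widetilde g(\widetilde f(i))<\widetilde g(\widetilde f(j))$.

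Your bijection idea can probably be made to work, but carrying it through requires precisely the pair-by-pair configuration analysis the paper performs, together with the correct multiplicity-$2$ diagonal count; without those, the bookkeeping you flag as the main obstacle does not close.
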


\begin{proof}
  Each strand in ${\widetilde f}$ corresponds to a Heegaard state for $\Hab$;
  each strand in ${\widetilde g}$ corresponds to a Heegaard state for $\Hbc$,
  and each composite strand corresponds to a triangle in $\Habc$,
  which in turn corresponds to a ${\mathbb G}_m$-orbit of a triangle in $\R^2$.
  Up to the action of ${\mathbb G}_m$, we can assume that the triangle has its
  $\alpha-\gamma$ vertex above its $\beta$-line, as in Figure~\ref{fig:MaslovComputation}.
\begin{figure}[ht]
\input{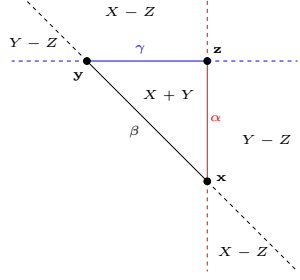}
\caption{\label{fig:MaslovComputation} {\bf{Maslov index of triangles.}}}
\end{figure}

Extending the lines of each triangle, we see that each triangle splits
the plane into $7$ regions, one of which is the (compact) triangle
itself.  The left-hand-side of Equation~\eqref{eq:TriangleMaslov} can
be interpreted as a sum, over each triangle $T$ of a count of all the
other components of type $X$, $Y$, or $Z$, in the seven regions,
counted with multiplicity $\pm 1$ or $0$, as indicated in
Figure~\ref{fig:MaslovComputation}.  This count, in turn, can be
organized according to all other triangles $T'$ connecting three
auxiliary generators, taken with given multiplicity.

Note that $T'$ is the image of $T$ under an an affine transformation
$L$ of $\R^2$ which is a composition of a real rescaling (which
includes a possible $180^\circ$ rotation) composed with a
translation. Note that $L$ is either a translation or it has a unique
fixed point. 

We claim that $T\cup T'$ hits the diagonal precisely when $L$ has a
fixed point, and that fixed point is contained in the interior of $L$
(or $L'$); and . moreover, in that case $\#((\psi_T\times \psi_T')\cap
\Delta)=2$. To see this, note that the map to $\Sym^2(\R^2)$ is
modeled on the map $t\mapsto \{t,L(t)\}$, which in turn corresponds to
the monic polynomial $z^2 -(t+L(t)) z +t L(t)$, whose discriminant is
$(t+L(t))^2-4 tL(t)=(t-L(t))^2$, which vanishes to order $2$ at the fixed
point of $L$.

We now verify that the contribution of the triangle pair $T,T'$ to the
left-hand-side of Equation~\eqref{eq:TriangleMaslov} coincides with
this intersection number with the diagonal, by looking at the possible
cases for the two triangles. After possibly switching roles of $T$ and
$T'$, we can assume that the number of vertices of $T'$ in $T$ is
greater or equal to the number of vertices of $T$ in $T'$. There are
the following possibilities:
\begin{itemize}
\item $T$ contains no vertices of $T'$.
  In this case, either $T'$ and $T$ are disjoint, or 
  they can overlap as pictured in 
  the first picture of Figure~\ref{fig:MaslovComputation5},
  in which case the local contribution is $2$.
\item $T$ contains exactly one vertex of $T'$.
  This can happen in two inequivalent ways: either $T'$ is a translate of $T$,
  in which case the contribution is $0$;
  or $T'$ is not obtained as a translate of $T$, in which case
  the local contribution is $2$, as can be seen by considering the six cases
  in the second picture of Figure~\ref{fig:MaslovComputation5}
\item $T$ contains exactly two vertices of $T'$ in its interior, as
  shown in the three cases on the third picture of
  Figure~\ref{fig:MaslovComputation5}.
\item $T$ contains all three vertices of $T'$ in its interior,
  as shown in the fourth picture of Figure~\ref{fig:MaslovComputation},
  in which case the local contribution is $2$.
\end{itemize}

\begin{figure}[ht]
\input{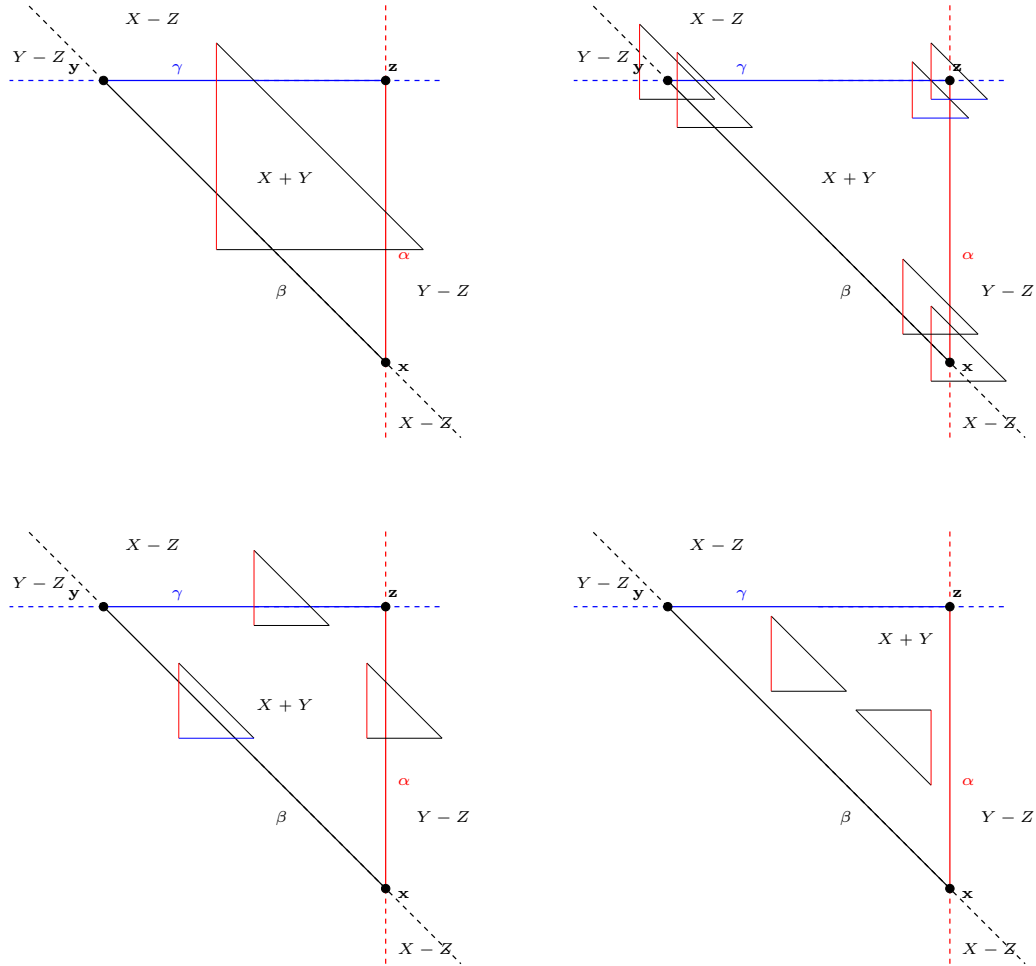}
\caption{\label{fig:MaslovComputation5} {\bf{Triangle pairs.}}  Here
  are the various combinatorial ways two triangles can interact, so
  that their contributions to (both sides of)
  Equation~\eqref{eq:TriangleMaslov} are non-zero. (Think of $T$ as
  the large triangle; various choices of the other triangle $T'$ are
  indicated by the smaller triangles.)}
\end{figure}

In view of  Lemma~\ref{lem:DiagonalCount}, we have verified that
\[ 
      \cross({\widetilde f},{\widetilde S})+ \cross({\widetilde
        g},{\widetilde T}) -\cross({\widetilde g}\circ {\widetilde
        f},{\widetilde S}) = \#(\Delta\cap \psi).\]

      Combining Equations~\eqref{eq:EulerMeasure} and~\ref{eq:IndexOfNgon},
      we see that 
      \[ \Mas(\psi)=O_1(\psi)+O_m(\psi)+\#(\psi\cap \Delta);\]
      since $O_1(\psi)=O_m(\psi)=0$, Equation~\eqref{eq:TriangleMaslov}.
\end{proof}

\begin{prop}
  \label{prop:TrianglePong}
  Under the identification from Proposition~\ref{prop:IdentifyComplexes},
  the triangle map corresponds to composition in the pong algebra.
\end{prop}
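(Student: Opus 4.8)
The plan is to reduce the statement to facts about coefficients of generators and then feed in the combinatorial lemmas of Section~\ref{sec:Triples}; the only analytic input is a small-triangle count. Recall that, after the tautological identifications built from the Liouville flow (the maps $\sigma$ and $(\phi_H)_*$ in diagram~\eqref{lem:TriangleMap}), the composition is computed by counting rigid pseudo-holomorphic triangles in the wrapped triple $\Habc$ of Section~\ref{sec:Triples}, each weighted by $v_1^{O_1(\psi)}\cdots v_m^{O_m(\psi)}$; and that Proposition~\ref{prop:IdentifyComplexes} (for $\Hac=\HD_{m,k}$), together with its evident analogues for $\Hab$ and $\Hbc$, identifies all three Floer complexes with $\Pong{m}{k}$, Heegaard states corresponding to lifted partial permutations. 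So I must show: for generators $\x\leftrightarrow({\widetilde f},{\widetilde S})$ and $\y\leftrightarrow({\widetilde g},{\widetilde T})$, the triangle product of $\x$ and $\y$ vanishes unless ${\widetilde T}={\widetilde f}({\widetilde S})$; that when ${\widetilde T}={\widetilde f}({\widetilde S})$ it vanishes unless $\cross({\widetilde g}\circ{\widetilde f})=\cross({\widetilde f})+\cross({\widetilde g})$; and that in the remaining case it equals $v\cdot\z$, where $\z\leftrightarrow({\widetilde g}\circ{\widetilde f},{\widetilde S})$ and $v$ is the monomial with $\weight(v\cdot[{\widetilde g}\circ{\widetilde f},{\widetilde S}])=\weight[{\widetilde f},{\widetilde S}]+\weight[{\widetilde g},{\widetilde T}]$.

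First I would pin down which classes can occur. A nonzero coefficient of some $\z$ produces a class $\psi\in\doms(\x,\y,\z)$ with $\Mas(\psi)=0$ carrying a holomorphic representative, so by Lemma~\ref{lem:MasZeroTriangConn} the states $\x,\y,\z$ are triangularly connected, and Lemma~\ref{lem:TriangularConnectedMeans} then forces ${\widetilde T}={\widetilde f}({\widetilde S})$ and $\z\leftrightarrow({\widetilde g}\circ{\widetilde f},{\widetilde S})$. Conversely, whenever ${\widetilde T}={\widetilde f}({\widetilde S})$, Lemma~\ref{lem:TriangularConnectedMeans} produces a triangularly connected triple $(\x,\y,\z)$ with $\z$ as above and a positive class $\psi\in\doms(\x,\y,\z)$; since $\HD$ is contractible the group of compactly supported cornerless two-chains for the triple is trivial (as used in the proof of Lemma~\ref{lem:EulerMeasure}), so $\psi$ is the \emph{unique} compactly supported class in $\doms(\x,\y,\z)$, and every holomorphic triangle in the diagram has a compactly supported positive domain. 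Thus the triangle product of $\x$ and $\y$ is supported on this single $\z$, and its coefficient is $\#\ModFlow(\psi)\cdot v_1^{O_1(\psi)}\cdots v_m^{O_m(\psi)}$ if $\Mas(\psi)=0$, and $0$ otherwise.

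Next I would compute $\Mas(\psi)$. Combining~\eqref{eq:EulerMeasure} with Sarkar's index formula~\eqref{eq:IndexOfNgon} gives $\Mas(\psi)=O_1(\psi)+O_m(\psi)+\#(\psi\cap\Delta)$, and the combinatorial core of the proof of Lemma~\ref{lem:MasZeroTriangleMult}, via Lemma~\ref{lem:DiagonalCount}, shows $\#(\psi\cap\Delta)=\cross({\widetilde f})+\cross({\widetilde g})-\cross({\widetilde g}\circ{\widetilde f})$ whenever $O_1(\psi)=O_m(\psi)=0$ (so that the factor triangles lift to $\R^2$). If $\Mas(\psi)=0$: since $\psi$ admits a holomorphic representative, $\#(\psi\cap\Delta)\geq 0$ and $O_1(\psi),O_m(\psi)\geq 0$, so all three vanish, and then Lemma~\ref{lem:MasZeroTriangleMult} gives $\cross({\widetilde g}\circ{\widetilde f})=\cross({\widetilde f})+\cross({\widetilde g})$. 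Conversely, if $\cross({\widetilde g}\circ{\widetilde f})=\cross({\widetilde f})+\cross({\widetilde g})$, then Lemma~\ref{lem:NoOuterUs} makes $\weight_1$ and $\weight_m$ additive, so by~\eqref{eq:WeightEquation} $O_1(\psi)=O_m(\psi)=0$, hence $\Mas(\psi)=\cross({\widetilde f})+\cross({\widetilde g})-\cross({\widetilde g}\circ{\widetilde f})=0$. So $\Mas(\psi)=0$ exactly when the crossing numbers add, matching the vanishing behaviour of $\mu_2$; and in that case $O_i(\psi)=\weight_i({\widetilde f})+\weight_i({\widetilde g})-\weight_i({\widetilde g}\circ{\widetilde f})$ by~\eqref{eq:WeightEquation}, so $v_1^{O_1(\psi)}\cdots v_m^{O_m(\psi)}$ is precisely the monomial $v$ appearing in the definition of $\mu_2$.

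It remains to show $\#\ModFlow(\psi)=1$ when $\Mas(\psi)=0$. Here $\#(\psi\cap\Delta)=0$, so $\psi$ is decomposable into $k$ triangles in $\HD$, and $O_1(\psi)=O_m(\psi)=0$ lets each factor be lifted (as in Lemma~\ref{lem:MasZeroTriangConn}) to an embedded triangle in $\R^2$; the count is then $1$ by the standard fact that an embedded holomorphic triangle has a unique holomorphic representative. I expect this last step to be the main obstacle: one must check that the $k$ lifted embedded triangles reassemble to a unique holomorphic map into $\Sym^k(\HD)$ (no collisions along $\Delta$, using that the homotopy class is off-diagonal) and that this representative is transversally cut out, so that its mod-$2$ count is genuinely $1$. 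Everything else is assembled from the combinatorial lemmas above, whereupon the triangle map and $\mu_2$ agree on all generators, proving the proposition.
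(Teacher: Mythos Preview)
Your argument is correct and follows essentially the same route as the paper's: Lemma~\ref{lem:MasZeroTriangConn} to force triangular connectedness, Lemma~\ref{lem:TriangularConnectedMeans} and Equation~\eqref{eq:WeightEquation} to match monomials, Lemma~\ref{lem:MasZeroTriangleMult} together with Lemma~\ref{lem:NoOuterUs} to see that $\Mas(\psi)=0$ exactly when crossing numbers add, and a small-triangle count to get $\#\ModFlow(\psi)=1$. Two minor remarks. First, you invoke a holomorphic representative to get $\#(\psi\cap\Delta)\geq 0$ and $O_i(\psi)\geq 0$, but at that stage of the argument you do not yet know one exists; the correct justification is simply that $\psi$ is positive (giving $O_i(\psi)\geq 0$) and triangularly connected, so Lemma~\ref{lem:DiagonalCount} applies and the case analysis in the proof of Lemma~\ref{lem:MasZeroTriangleMult} shows each pair of lifted triangles contributes $0$ or $2$ to $\#(\psi\cap\Delta)$, independently of any $O_1=O_m=0$ hypothesis. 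Second, you treat the map $\sigma$ in diagram~\eqref{lem:TriangleMap} as a tautology; the paper points out that $\sigma$ involves a continuation map interpolating between almost-complex structures, and then argues this continuation map is literally the identity because (by Lemma~\ref{lem:PositiveDomains}, Property~\ref{P:ResolveCrossings} and Equation~\eqref{eq:MaslovDifference}) the only non-negative index-zero domain is the constant one. This is a short but genuine step you should not omit.
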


\begin{proof}
  The triangle map counts index zero triangles, which corresponds to
  triangularly connected lifted partial permutations by
  Lemma~\ref{lem:MasZeroTriangConn}.
  Lemma~\ref{lem:MasZeroTriangleMult} in turn identifies the counts of
  index zero triangles with compositions in the pong algebra,
  in view of Lemma~\ref{lem:NoOuterUs}. Moreover, 
  Equation~\eqref{eq:WeightEquation} identifies the coefficients of
  those counts.

  Implicit in the above identification is the identification
  $(\HD,\alphas,\betas,\{O_1,\dots,O_m\})\cong
  (\HD,\alphas,\betas,\{O_1,\dots,O_m\})$, which can be viewed as the
  identification coming from the Liouville flow. (Compare
  Equation~\eqref{eq:ImageUnderLiouville}.)  The corresponding map on
  the wrapped complexes was denoted $\sigma$ in
  Equation~\eqref{lem:TriangleMap}. To promote this to a chain map, in
  general, following~\cite{AbouzaidCriterion}, one must use a
  continuation map interpolating between the complex structure used on
  $\HD$, and its pull-back under the Liouville flow.

  For the Heegaard diagram $(\HD,\alphas,\betas,\{O_i\}_{i=1}^m)$,
  though the continuation map between any two admissible paths of
  almost-complex structures is simply the identity map, since the
  continuation is supported on non-negative domains with index zero
  and, following Lemma~\ref{lem:PositiveDomains}
  (i.e. Property~\ref{P:ResolveCrossings} combined with
  Equation~\eqref{eq:MaslovDifference}), the only such domain is the
  constant domain (with $\x=\y$).
\end{proof}

\section{Polygons with $n>3$ sides}


We now generalize the Heegaard diagram and the Heegaard triple as
follows.   Consider $\R^2$, with the $O_i$ marking the diagonal
line with half-integer coordinates as before. Choose $n$ sets
$\Las^1,\dots,\Las^n$ of lines, as follows. Choose an increasing set
of angles $\frac{\pi}{4}<\theta_1<\dots<\theta_n<\frac{5\pi}{4}$, and
let $\Las^j$ be parallel lines in $\R^n$ forming angle $\theta_j$ with
respect to the real axis $\R\times 0$, so that two consecutive lines 
are separated by some $O$ marking, and so that the intersections
between the various lines $\La^j_\ell$ within the set $\Las^j$ are in general position. Explicitly,
\[ \Las^j_\ell =
\{(\ell+\epsilon^j(\ell)+t\cos\theta_j,\ell+\epsilon^j(\ell)+t\sin\theta_j)\}_{t\in
  \R},\] where $\{\epsilon^j\colon \Z\to \R\}_{j=1}^n$ are
$G_m$-invariant functions with $\epsilon^j(\ell)<\OneHalf$, so that
$\epsilon^s(\ell)\neq \epsilon^t(\ell)$ for all $s\neq t$.  See
Figure~\ref{fig:HeegaardQuad} for a non-generic picture
(i.e. $\epsilon^j\equiv 0$ in the above formulas) in $\R^2$, when
$n=4$. Note that the picture is also rotated 90$\circ$: the line
through the $O$ markings should have slope $1$.

\begin{figure}[ht]
\input{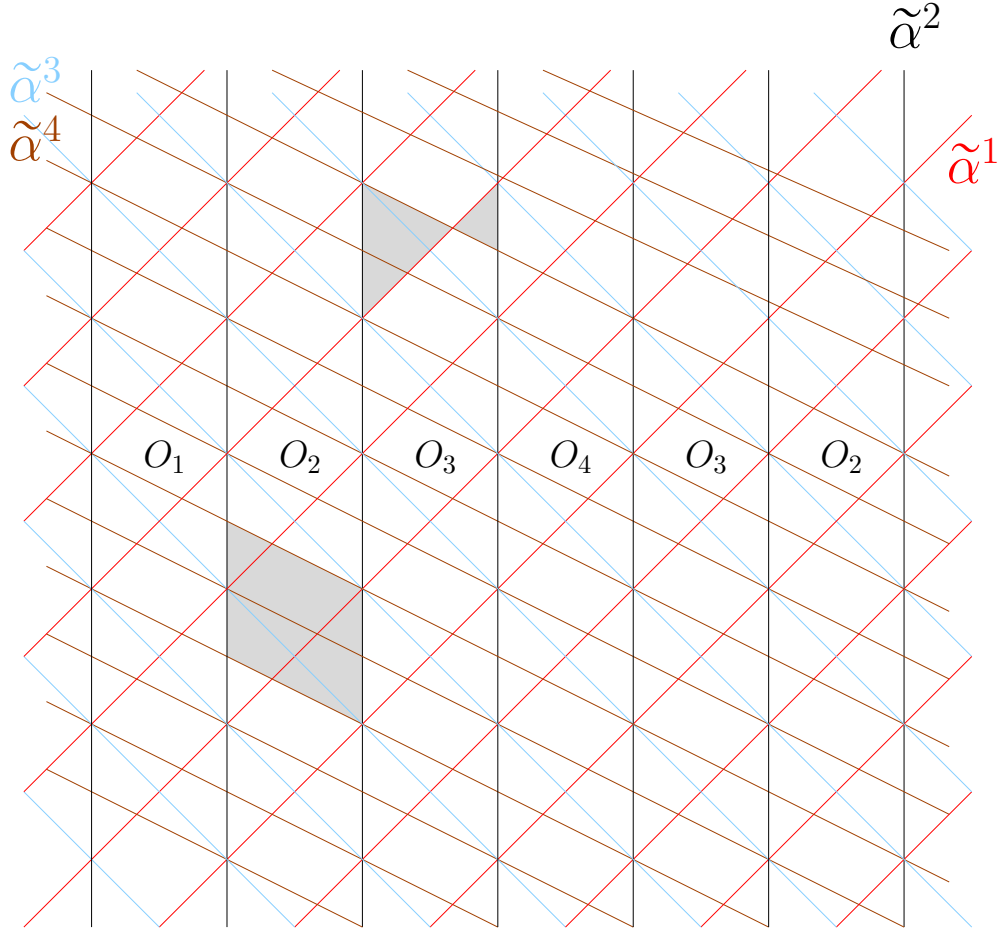}
\caption{\label{fig:HeegaardQuad} {\bf{Heegaard quadruple.}}
This is the (unperturbed) multi-diagram with $n=4$ lifted to $\R^2$.
Two elements of ${\mathcal A}$ are shaded.}
\end{figure}

When $n=2$, a linear transformation carries this to the Heegaard
diagram from Section~\ref{sec:HeegPong}; and when $n=3$, a linear
transformation carries this to the Heegaard triple from
Section~\ref{sec:Triples}. As in Lemma~\ref{lem:WrapDiagram}, 
this is the Heegaard diagram for the $A_\infty$ actions on the wrapped Fukaya category.

Lemma~\ref{lem:EulerMeasure}, which was stated earlier for triangles,
actually holds for arbitrary $n$-gons:

\begin{lemma}
  \label{lem:EulerMeasureNGon}
  Let $\psi_n\in\pi_2(\x_1,\dots,\x_n)$ be a positive domain,
  then 
  \[ e(\psi_n)=\frac{k(n-2)}{4}+\frac{O_1(\psi_n)}{2}+\frac{O_m(\psi)}{2}.\]
\end{lemma}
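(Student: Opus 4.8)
The plan is to argue by induction on $n$, the base case $n=3$ being Lemma~\ref{lem:EulerMeasure} itself (which, by the very construction of the $n$-gon diagram, is its $n=3$ instance up to a linear transformation). First I would record the harmless preliminary observation: exactly as in the proof of Lemma~\ref{lem:EulerMeasure}, a compactly supported cornerless two-chain on $\HD$ is zero, so for fixed states $\pi_2(\x_1,\dots,\x_n)$ is empty or a singleton; and since $e$, $O_1$, and $O_m$ are all additive (homomorphisms on two-chains), the asserted identity is a statement about that single two-chain, and the positivity hypothesis is never actually used. This matters, because the pieces produced in the inductive step need not be individually positive.

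For the inductive step, given $\psi_n\in\pi_2(\x_1,\dots,\x_n)$ I would split off the last corner. Passing to lifted partial permutations, $n$-connectedness of $\x_1,\dots,\x_n$ means these compose; let $\mathbf w$ be the Heegaard state for the pair $(\Las^{n-2},\Las^n)$ corresponding to the composite of the partial permutations of $\x_{n-2}$ and $\x_{n-1}$. The standard juxtaposition of domains — legitimate here because $\HD$ is contractible, so a domain is determined by its corners, and the composition condition restricts to each half — then lets one write $\psi_n=\psi'+\psi''$ as two-chains, with $\psi'\in\pi_2(\x_{n-2},\x_{n-1},\mathbf w)$ a triangle domain for the triple $(\Las^{n-2},\Las^{n-1},\Las^n)$ and $\psi''\in\pi_2(\x_1,\dots,\x_{n-3},\mathbf w,\x_n)$ an $(n-1)$-gon domain for the sub-multi-diagram with curve systems $\Las^1,\dots,\Las^{n-2},\Las^n$; the two opposite corners at $\mathbf w$ (terminal for $\psi'$, initial for $\psi''$) cancel, and one checks that $\psi'+\psi''$ carries exactly the corner data of an element of $\pi_2(\x_1,\dots,\x_n)$, hence equals $\psi_n$ by the uniqueness above.

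The sub-multi-diagram obtained by deleting $\Las^{n-1}$ is again of the form considered in this section — $\mathbb G_m$-invariant families of parallel lines at strictly increasing angles in $(\tfrac\pi4,\tfrac{5\pi}4)$, consecutive families separated by an $O$-marking, orbifold points still $O_1$ and $O_m$ — so up to a linear transformation it is an $(n-1)$-gon wrap diagram of the kind considered here, and the inductive hypothesis gives $e(\psi'')=\tfrac{k(n-3)}4+\tfrac12O_1(\psi'')+\tfrac12O_m(\psi'')$. Likewise Lemma~\ref{lem:EulerMeasure} applies to the triangle $\psi'$, giving $e(\psi')=\tfrac k4+\tfrac12O_1(\psi')+\tfrac12O_m(\psi')$. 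Adding these and using additivity of $e$, $O_1$, $O_m$ produces $e(\psi_n)=\tfrac{k(n-2)}4+\tfrac12O_1(\psi_n)+\tfrac12O_m(\psi_n)$, as wanted. (One then recovers the clean formula $\Mas(\psi_n)=O_1(\psi_n)+O_m(\psi_n)+\#(\psi_n\cap\Delta)$ by feeding this into Sarkar's index formula~\eqref{eq:IndexOfNgon}, which is the use to which the lemma is put.)

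The step I expect to be the real obstacle is the juxtaposition decomposition $\psi_n=\psi'+\psi''$: one must genuinely produce, from a given $n$-gon domain, an honest intermediate state $\mathbf w$ together with a triangle domain and an $(n-1)$-gon domain for the indicated sub-diagrams whose two-chains add up to $\psi_n$. This is the customary ``pinch a chord of the polygon'', and it is forced here because every relevant $\pi_2$ is empty or a singleton and the composition condition transfers to both halves; still, it is the place a careful reader should slow down. I would also remark that the direct analogue of the $\mathcal A$/$\mathcal B$ argument used for Lemma~\ref{lem:EulerMeasure} is less transparent for $n\geq 4$: with $n-1\geq 3$ curve directions available, the group of ``internal'' two-chains contains large elementary polygons on which $e-\tfrac12O_1-\tfrac12O_m$ need not vanish, so one would have to work harder to pin down the relevant subgroup — which is why I favor the inductive route.
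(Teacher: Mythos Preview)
Your inductive argument is correct, and it is genuinely different from the paper's. The paper does \emph{not} induct on $n$; it runs the $\mathcal A/\mathcal B$ argument directly for all $n$, and in fact your closing worry about that route is unfounded. The trick is in the choice of $\mathcal A$: rather than taking cornerless two-chains for the full multi-diagram, the paper fixes one family, $\Las^2$, and lets $\mathcal A$ be the group of compactly supported two-chains that are cornerless at every intersection $\La^j_\ell\cap\La^{j'}_{\ell'}$ with $j,j'\neq 2$. With that choice $\mathcal A$ is still generated by ``fundamental'' regions bounded by two segments in $\Las^2$ and one segment each in some $\Las^{j}$, $\Las^{j'}$; these are rectangles or bigons exactly as in the triangle case, and $\tilde e=e-\tfrac12 O_1-\tfrac12 O_m$ vanishes on them. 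The affine space $\mathcal B$ records only the corners at $\x^{3,4},\dots,\x^{n-1,n},\x^{n,1}$, and a representative of $\mathcal B/\mathcal A$ can be taken to be a disjoint union of $n-2$ small triangles missing $O_1$ and $O_m$, giving $\tilde e=\tfrac{k(n-2)}{4}$.

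As for what each approach buys: the paper's argument is a single uniform computation once you see the right $\mathcal A$, and it makes the ``$n-2$ triangles'' picture explicit in one step. Your induction trades that for a cleaner bookkeeping (no need to identify generators of a larger $\mathcal A$), at the cost of the juxtaposition step you flagged; that step is fine here because the $\alpha^{n-1}$-boundary of $\psi_n$ and of the triangle $\psi'$ are compactly supported one-chains on a disjoint union of lines with the same endpoints, hence equal, so $\psi_n-\psi'$ really does live in the sub-diagram with $\Las^{n-1}$ deleted. Note also that the union of $n-2$ triangles the paper uses as its base representative is exactly the end result of iterating your splitting, so the two arguments are closer in spirit than they first appear.
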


\begin{proof}
  The argument used in the proof of Lemma~\ref{lem:EulerMeasure}
  when $n=3$ can be adapted to $n>2$, as follows.
  
  Let ${\mathcal A}$ denote the Abelian group of compactly supported
  $2$-chains which are required to be cornerless at all the intersections of
  $\La^j_\ell$ with $\L^{j'}_{\ell'}$, provided that $j,j'\neq 2$. 
  
  Fix Heegaard states $\x^{i,i+1}$ for $(\HD,\alphas^i,\alphas^{i+1})$
  and $\x^{n,1}$ for $(\HD,\alphas^{n},\alphas^{1})$.  Let ${\mathcal
    B}$ be the affine space for ${\mathcal A}$ with initial corners at
  the lifts of $\x^{3,4},\x^{4,5},\dots,\x^{n-1,n},\x^{n,1}$;
  i.e. there are no constraints placed on the intersections with
  $\alphas^2$. There are, once again, fundamental regions, which are
  rectangles (possibly with a self-intersection) formed now by two
  segments in $\Las^2$ and two other segments in $\Las^{j}$ and
  $\Las^{j'}$.  The regions are called {\em fundamental bigons} if the
  four segments are permuted by the ${\mathbb G}_m$ action, otherwise
  they are called {\em fundamental rectangles}.

  Like for the case of triangles, the fundamental rectangles and
  bigons generate ${\mathcal A}$; the function ${\widetilde
    e}=e-\frac{O_1}{2}-\frac{O_m}{2}$, which is defined on all
  (finite) $2$-chains, vanishes on ${\mathcal A}$.  Consider
  next ${\mathcal B}$, which is the set of (finite)
  $2$-chains with initial corners at
  $\x^{3,4},\x^{4,5},\dots,\x^{n-1,n}$, and terminal corner at
  $\x^{n,1}$.  We can find representatives of ${\mathcal B}/{\mathcal
    A}$, which are a union of $n-2$ triangles that miss $O_1$ and
  $O_m$. It is an easy computation to see that this representative has
  ${\widetilde e}=\frac{(n-2)}{k}$.

  The result is also true when $n=2$. We do not explicitly need it here,
  and we leave it to the reader to supply the details.
\end{proof}

Recall that a rigid holomorphic $n$-gon has $\Mas(\phi_n)=3-n$.

\begin{prop}
  \label{prop:NoNGons}
  There are no rigid, holomorphic Whitney $n$-gons with $n>3$;
  in particular, for any pseudo-holomorphic $n$-gon with $n>3$, 
  $\Mas(\psi_n)\geq 0$.
\end{prop}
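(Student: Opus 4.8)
The plan is to combine the Euler measure computation of Lemma~\ref{lem:EulerMeasureNGon} with Sarkar's index formula~\eqref{eq:IndexOfNgon} to produce a clean expression for the Maslov index of a positive $n$-gon domain, and then conclude by positivity of intersections.

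First I would observe that the domain underlying any pseudo-holomorphic Whitney $n$-gon $\psi_n\in\pi_2(\x_1,\dots,\x_n)$ has all local multiplicities non-negative, so it is a positive domain and Lemma~\ref{lem:EulerMeasureNGon} applies, giving $e(\psi_n)=\frac{k(n-2)}{4}+\frac{O_1(\psi_n)}{2}+\frac{O_m(\psi_n)}{2}$, hence $2e(\psi_n)=\frac{k(n-2)}{2}+O_1(\psi_n)+O_m(\psi_n)$. Substituting this into~\eqref{eq:IndexOfNgon}, the two copies of $\frac{k(n-2)}{2}$ cancel, and we are left with
\[ \Mas(\psi_n)=O_1(\psi_n)+O_m(\psi_n)+\#(\psi_n\cap\Delta). \]

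Next I would invoke positivity of intersections. The moduli spaces are defined using almost-complex structures for which the divisors $\{O_i\}\times\Sym^{k-1}(\HD)$ and the big diagonal $\Delta\subset\Sym^k(\HD)$ are pseudo-holomorphic, so a pseudo-holomorphic curve meets each of them with non-negative multiplicity. Therefore each of the three terms on the right is $\geq 0$, and $\Mas(\psi_n)\geq 0$ for every pseudo-holomorphic $n$-gon. This is the second assertion; for the first, recall that a rigid holomorphic $n$-gon has $\Mas(\psi_n)=3-n$, which is negative when $n>3$, contradicting the inequality just established — so there are no rigid holomorphic Whitney $n$-gons with $n>3$.

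The only point that genuinely requires care is the non-negativity of $\#(\psi_n\cap\Delta)$ (and of $O_1(\psi_n)$, $O_m(\psi_n)$), i.e.\ that the big diagonal and the divisors may be taken pseudo-holomorphic for an admissible choice of complex structure used to define the moduli spaces; this is the standard local positivity-of-intersections input, exactly as in~\cite{HolDisk}. Everything else is bookkeeping already carried out in Lemmas~\ref{lem:EulerMeasure} and~\ref{lem:EulerMeasureNGon}, so the main obstacle — the Euler-measure identity for arbitrary $n$-gons — has in effect already been dispatched before the proposition is reached.
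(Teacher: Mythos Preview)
Your proof is correct and follows essentially the same route as the paper: substitute Lemma~\ref{lem:EulerMeasureNGon} into Sarkar's formula~\eqref{eq:IndexOfNgon} to get $\Mas(\psi_n)=O_1(\psi_n)+O_m(\psi_n)+\#(\psi_n\cap\Delta)$, then invoke non-negativity of each term for a pseudo-holomorphic representative. (Your version of the resulting identity, without the stray factors of $\tfrac12$, is the correct one and agrees with the instance already used in the proof of Lemma~\ref{lem:MasZeroTriangleMult}.)
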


\begin{proof}
  Combining Lemma~\ref{lem:EulerMeasureNGon} with
  Equation~\eqref{eq:IndexOfNgon}, we find that 
  \[ \Mas(\psi_n)=\frac{O_1(\psi_n)}{2}+
  \frac{O_m(\psi_n)}{2}+\#(\psi_n\cap \Delta).\] For
  pseudo-holomorphic $\psi_n$, all three terms on the right-hand-side
  are non-negative.
\end{proof}

\begin{proof}[of Theorem~\ref{thm:IdentifyPong}]
  The identification from Theorem~\ref{thm:IdentifyPong}
  consists of three statements: 
  \begin{itemize}
  \item The endomorphism algebra is isomorphic with the pong algebra,
    as a chain complex. This is Proposition~\ref{prop:IdentifyComplexes}.
  \item The composition law on the endomorphism algebra is identified 
    with the multiplication on the pong algebra.
    This is Proposition~\ref{prop:TrianglePong}.
  \item Like the pong algebra, the endomorphism algebra has vanishing
    $\mu_n$ with $n>3$. This is Proposition~\ref{prop:NoNGons}.
  \end{itemize}
\end{proof}

\section{Further}

There are several variants on the constructions considered here.  For
example, we could consider the cylinder $A$, equipped with $m$
vertical lines separated by $O$-markings. The endomorphism algebra, in
$\Sym^k(A)$ of the corresponding Lagrangians can be thought of as a
more symmetric, circular analogue of the pong algebra (i.e. one
without left and right walls), discovered by Manion and
Rouqier~\cite{ManionRouquier}.

In our notation, to each integer $m\geq 1$ and $1\leq k\leq m$, one
can consider a differential graded algebra $A(m,k)$ over
$\Field[v_1,\dots,v_m]$, which could be called the {\em asteroids
  algebra}, defined as follows.\footnote{The choice of terminology for
  our algebras can be taken as evidence for misspent youth.}  (This
algebra was fist considered in unpublished joint work of the first author with
Robert Lipshitz and Dylan Thurston, when consider knot Floer homology
for toroidal grid diagrams; compare~\cite{Gentle, PetkovaVertesi}.  It is also closely related to the
``differential graded nil Hecke algebras associated to the extended
affine symmetric groups'' of~\cite{ManionRouquier}.)  Consider the
circle $\R/m\Z$, equipped with with basepoints
$\{\OneHalf,\dots,\OneHalf+m-1\}$ corresponding to $v_1,\dots,v_m$. An
idempotent state corresponds to a $k$-element subset of
$\{1,\dots,m\}\subset \R/m\Z$; or, equivalently, a a $m\Z$-invariant
subset ${\widetilde S}\subset \Z$.  A ($m\Z$-lifted) partial
permutation, now, consists of such a subset ${\widetilde S}$, and a
map $f\colon {\widetilde S}\to \Z$ satisfying $f(x+m)=f(x)+m$. A {\em
  crossing} consists of a pair of integers $i<j$ so that
$f(i)>f(j)$. Once again, there is a Maslov grading that counts the
number of crossings. There is a multiplication map induced from
composition of $m\Z$-lifed partial permutations, which is set to $0$
if the Maslov grading of the product is smaller than the sum of the
Maslov gradings of the factors. Similarly, there is a differential
that resolves crossings, containing only those terms whose resolutions
have exactly one fewer crossing. Verifying that the result is a
differential graded algebra is straightforward, and slightly simpler
than the corresponding verification for the pong
algebra. (Compare~\cite[Section~4]{Pong}.)  Weights are at points in
$\frac{\OneHalf+\Z}{m\Z}$, with the weight of $a\in \OneHalf+\Z$ given
by
\[ \OneHalf (\#\{i\big| i<a<f(i)\} + \#\{i\big| f(i)<a<i\}).\]

For example, consider the $3\Z$-lifted partial permutation $f$ with
domain $\{1+3\Z,2+3\Z\}$, which is determined by 
\begin{equation}
  f(1)=6 \qquad{\text{and}}\qquad
  f(2)=1.
  \label{eq:Deff}
\end{equation}
 This has two crossings: the equivalence class of the pair of strands
 starting at $1$ and $2$; and the pair of strands starting at $-2$ and
 $3$. The weight vector is given by $(1/2,3/2,1)$. See
 Figure~\ref{fig:Asteroids} for a picture.

 Note also that when $m=4$ and $k=1$, this is the ``peculiar algebra''
 of Zibrowius~\cite{Zibrowius}.

\begin{figure}[ht]
\input{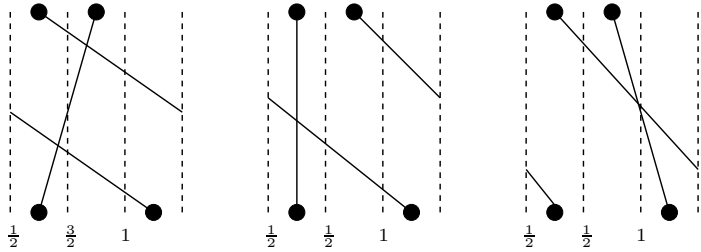}
\caption{\label{fig:Asteroids} {\bf{Asteroids diagram.}}
At the left, an asteroids diagram for $f$ from Equation~\eqref{eq:Deff}.
The other two pictures represent the terms in the differential of the first term,
both taken with multiplicity $v_2$.}
\end{figure}

The Heegaard diagram for the wrapped Fukaya category (analogous
to the diagram from Section~\ref{sec:HeegPong})
is a quotient of $\R^2$ by a group of translations (rather
than the group of motions ${\mathbb G}_m$ considered above).
Proposition~\ref{prop:NoNGons} holds also in this case.

One can look at other configurations of Lagrangians in punctured
spheres.  For instance, for a linear chain of spheres, the
endomorphism algebra in the wrapped Fukaya category does have a higher
multiplication (and indeed it can be computed, for example, with the
methods of~\cite{Pong}). 

Finally, building on~\cite{Pong}, the pong algebra can be thought of
as governing the bordered invariants for certain types of Heegaard
diagrams associated to tangles; see~\cite{NextPong}; compare
also~\cite{InvPair, HolKnot}. In light of this, the present work can
be thought of as analogous to Auroux's interpretation of bordered
Floer homology~\cite{Auroux}.

\bibliographystyle{plain}
\bibliography{biblio}

\begin{thebibliography}{10}

\bibitem{AbouzaidCriterion}
M.~Abouzaid.
\newblock A geometric criterion for generating the {F}ukaya category.
\newblock {\em Publ. Math. Inst. Hautes \'{E}tudes Sci.}, (112):191--240, 2010.

\bibitem{AbouzaidSeidel}
M.~Abouzaid and P.~Seidel.
\newblock An open string analogue of {V}iterbo functoriality.
\newblock {\em Geom. Topol.}, 14(2):627--718, 2010.

\bibitem{Auroux}
D.~Auroux.
\newblock Fukaya categories and bordered {H}eegaard-{F}loer homology.
\newblock In {\em Proceedings of the {I}nternational {C}ongress of
  {M}athematicians. {V}olume {II}}, pages 917--941. Hindustan Book Agency, New
  Delhi, 2010.

\bibitem{AurouxBeginner}
D.~Auroux.
\newblock A beginner's introduction to {F}ukaya categories.
\newblock In {\em Contact and symplectic topology}, volume~26 of {\em Bolyai
  Soc. Math. Stud.}, pages 85--136. J\'{a}nos Bolyai Math. Soc., Budapest,
  2014.

\bibitem{CieliebakEliashberg}
K.~Cieliebak and Y.~Eliashberg.
\newblock {\em From {S}tein to {W}einstein and back}, volume~59 of {\em
  American Mathematical Society Colloquium Publications}.
\newblock American Mathematical Society, Providence, RI, 2012.
\newblock Symplectic geometry of affine complex manifolds.

\bibitem{EllisPetkovaVertesi}
A.~P. Ellis, I.~Petkova, and V.~V\'{e}rtesi.
\newblock Quantum {$\mathfrak{gl}_{1|1}$} and tangle {F}loer homology.
\newblock {\em Adv. Math.}, 350:130--189, 2019.

\bibitem{KotelskiyWatsonZibrowius}
A.~Kotelskiy, L.~Watson, and C.~Zibrowius.
\newblock Immersed curves in {K}hovanov homology.
\newblock arxiv/1910.14584.

\bibitem{LaudaLicataManion}
A.~D. Lauda, A.~M. Licata, and A.~Manion.
\newblock Strands algebras and the affine highest weight property for
  equivariant hypertoric categories.
\newblock arxiv.org/2107.06480.

\bibitem{LipshitzCyl}
R.~Lipshitz.
\newblock A cylindrical reformulation of {H}eegaard {F}loer homology.
\newblock {\em Geom. Topol.}, 10:955--1097 (electronic), 2006.

\bibitem{Gentle}
R.~Lipshitz, P.~S. Ozsv\'{a}th, and D.~P. Thurston.
\newblock Slicing planar grid diagrams: a gentle introduction to bordered
  {H}eegaard {F}loer homology.
\newblock In {\em Proceedings of {G}\"{o}kova {G}eometry-{T}opology
  {C}onference 2008}, pages 91--119. G\"{o}kova Geometry/Topology Conference
  (GGT), G\"{o}kova, 2009.

\bibitem{HomPairing}
R.~Lipshitz, P.~S. Ozsv\'{a}th, and D.~P. Thurston.
\newblock Heegaard {F}loer homology as morphism spaces.
\newblock {\em Quantum Topol.}, 2(4):381--449, 2011.

\bibitem{InvPair}
R.~Lipshitz, P.~S. Ozsvath, and D.~P. Thurston.
\newblock Bordered {H}eegaard {F}loer homology.
\newblock {\em Mem. Amer. Math. Soc.}, 254(1216):viii+279, 2018.

\bibitem{TorusAlg}
R.~Lipshitz, P.~S. Ozsv\'ath, and D.~P. Thurston.
\newblock A bordered {$HF^-$} algebra for the torus.
\newblock arxiv.org/abs/2108.12488, 2021.

\bibitem{ManionRouquier}
A.~Manion and R.~Rouquier.
\newblock Higher representations and cornered heegaard floer homology.
\newblock arxiv.org/2009.09627, 2020.

\bibitem{HolDisk}
P.~Ozsv{\'a}th and Z.~Szab{\'o}.
\newblock Holomorphic disks and topological invariants for closed
  three-manifolds.
\newblock {\em Ann. of Math. (2)}, 159(3):1027--1158, 2004.

\bibitem{NextPong}
P.~S. Ozsv{\'a}th and Z.~Szabo.
\newblock Bimodules over the pong algebra and knot invariants.
\newblock In preparation.

\bibitem{Pong}
P.~S. Ozsv{\'a}th and Z.~Szab{\'o}.
\newblock The pong algebra.
\newblock arxiv/2212.11885.

\bibitem{HolKnot}
P.~S. Ozsv{\'a}th and Z.~Szabo.
\newblock Algebras with matchings and knot {F}loer homology.
\newblock arxiv.org/abs/1912.01657, 2019.

\bibitem{Perutz}
T.~Perutz.
\newblock Hamiltonian handleslides for {H}eegaard {F}loer homology.
\newblock In {\em Proceedings of {G}\"{o}kova {G}eometry-{T}opology
  {C}onference 2007}, pages 15--35. G\"{o}kova Geometry/Topology Conference
  (GGT), G\"{o}kova, 2008.

\bibitem{PerutzSheridan}
T.~Perutz and N.~Sheridan.
\newblock Constructing the relative {F}ukaya category.
\newblock arxiv.org/2203.15482, 2022.

\bibitem{PetkovaVertesi}
I.~Petkova and V.~V\'{e}rtesi.
\newblock Combinatorial tangle {F}loer homology.
\newblock {\em Geom. Topol.}, 20(6):3219--3332, 2016.

\bibitem{RasmussenThesis}
J.~A. Rasmussen.
\newblock {\em Floer homology and knot complements}.
\newblock PhD thesis, Harvard University, 2003.

\bibitem{SarkarMaslov}
S.~Sarkar.
\newblock Maslov index formulas for {W}hitney {$n$}-gons.
\newblock {\em J. Symplectic Geom.}, 9(2):251--270, 2011.

\bibitem{SarkarWang}
S.~Sarkar and J.~Wang.
\newblock An algorithm for computing some {H}eegaard {F}loer homologies.
\newblock {\em Ann. of Math. (2)}, 171(2):1213--1236, 2010.

\bibitem{Varouchas}
J.~Varouchas.
\newblock Stabilit\'{e} de la classe des vari\'{e}t\'{e}s k\"{a}hl\'{e}riennes
  par certains morphismes propres.
\newblock {\em Invent. Math.}, 77(1):117--127, 1984.

\bibitem{Zarev}
R.~Zarev.
\newblock {\em Bordered {S}utured {F}loer {H}omology}.
\newblock ProQuest LLC, Ann Arbor, MI, 2011.
\newblock Thesis (Ph.D.)--Columbia University.

\bibitem{Zibrowius}
C.~Zibrowius.
\newblock Peculiar modules for 4-ended tangles.
\newblock {\em J. Topol.}, 13(1):77--158, 2020.

\end{thebibliography}

\end{document}